\newtheorem{theorem}{Theorem}[section]
\newtheorem{lemma}[theorem]{Lemma}
\newtheorem{corollary}[theorem]{Corollary}
\theoremstyle{definition}
\newtheorem{definition}[theorem]{Definition}
\newtheorem{example}[theorem]{Example}
\newtheorem{remark}[theorem]{Remark}
\newtheorem{assumption}[theorem]{Assumption}
\setlist[enumerate]{leftmargin=*, label=(\roman*)}
\numberwithin{equation}{section}
\newcommand{\Newline}{{\rule{0mm}{1mm}\\[-3.25ex]\rule{0mm}{1mm}}}
\newcommand{\Rinf}{[-\infty,\infty)}
\newcommand{\Rbar}{\Rinf}
\def\Bk{{\rm B}_\kappa}
\def\Cb{{\rm C}_{\rm b}}
\def\Cbi{{\rm C}_{\rm b}^\infty}
\def\Cci{{\rm C}_{\rm c}^\infty}
\def\Ck{{\rm C}_\kappa}
\def\Fk{{\rm F}_\kappa}
\def\Uk{{\rm U}_\kappa}
\def\Lipb{{\rm Lip}_{\rm b}}
\def\C{\mathcal{C}}
\DeclareMathOperator*{\Glim}{\Gamma-\lim}
\DeclareMathOperator*{\Gliminf}{\Gamma-\liminf}
\DeclareMathOperator*{\Glimsup}{\Gamma-\limsup}
\DeclareMathOperator{\id}{id}
\DeclareMathOperator{\iter}{iter}
\DeclareMathOperator{\loc}{loc}
\DeclareMathOperator{\supp}{supp}
\DeclareMathOperator{\Span}{span}
\DeclareMathOperator{\sym}{sym}
\DeclareMathOperator{\tr}{tr}
\def\A{\mathcal{A}}
\def\B{\mathcal{B}}
\def\d{{\rm d}}
\def\D{\mathcal{D}}
\def\E{\mathbb{E}}
\def\L{\mathcal{L}}
\def\N{\mathbb{N}}
\def\n{\mathcal{N}}
\def\P{\mathbb{P}}
\def\p{\mathcal{P}}
\def\Q{\mathcal{Q}}
\def\R{\mathbb{R}}
\def\S{\mathbb{S}}
\def\s{\mathscr{S}}
\def\T{\mathcal{T}}
\def\W{\mathcal{W}}
\def\X{\mathcal{X}}
\def\epsilon{\varepsilon}
\def\AG{A_\Gamma}
\def\BG{B_\Gamma}
\def\fe{\overline{f}^\epsilon}
\def\one{\mathds{1}}
\def\AI{\mathcal{A}_I}
\def\AS{\mathcal{A}_S}
\def\AIiter{\mathcal{A}_I^{\iter}}
\def\ASiter{\mathcal{A}_S^{\iter}}
\def\LI{\mathcal{L}_I}
\def\LS{\mathcal{L}_S}
\def\LT{\mathcal{L}_T}
\def\LIplus{\mathcal{L}_I^+}
\def\LSplus{\mathcal{L}_S^+}
\def\LTplus{\mathcal{L}_T^+}
\def\LIsym{\mathcal{L}_I^{\sym}}
\def\LSsym{\mathcal{L}_S^{\sym}}
\def\ca{{\rm ca}_\kappa^+}
\def\Rd{\mathbb{R}^d}
\def\Rq{\mathbb{R}^q}
\def\Aad{\mathcal{A}_{\rm ad}}
\begin{document}

\title[Convex monotone semigroups]
{Convex monotone semigroups and their generators with respect to $\Gamma$-convergence}

\author{Jonas Blessing$^{*,1}$}
\address{{\rm $^*$Department of Mathematics, ETH Zurich, 8092 Zurich,Switzerland}}
\email{$^1$jonas.blessing@math.ethz.ch}

\author{Robert Denk$^{**,2}$}
\email{$^2$robert.denk@uni-konstanz.de}
\address{{\rm $^{**}$Department of Mathematics and Statistics,  University of Konstanz,
 78457 Konstanz, Germany}}
 
\author{Michael Kupper$^{**,3}$}
\email{$^3$ kupper@uni-konstanz.de}

\author{Max Nendel$^{***,4}$}
\address{{\rm $^{***}$Center for Mathematical Economics, Bielefeld University,
 33615 Bielefeld, Germany}}
\email{$^4$max.nendel@uni-bielefeld.de}

\thanks{Financial support through the Deutsche Forschungsgemeinschaft 
(DFG, German Research Foundation) -- SFB 1283/2 2021 – 317210226 is 
gratefully acknowledged.}

\date{\today}

\begin{abstract}
 We study semigroups of convex monotone operators on spaces 
 of continuous functions and their behaviour with respect to 
 $\Gamma$-convergence. In contrast to the linear theory, the domain 
 of the generator is, in general, not invariant under the semigroup. 
 To overcome this issue, we consider different versions of invariant 
 Lipschitz sets which turn out to be  suitable domains for weaker 
 notions of the generator. The so-called $\Gamma$-generator is 
 defined as the time derivative with respect to $\Gamma$-convergence 
 in the space of upper semicontinuous functions. Under suitable 
 assumptions, we show that the $\Gamma$-generator uniquely 
 characterizes the semigroup and is determined by its evaluation
 at smooth functions. Furthermore, we provide Chernoff approximation 
 results for convex monotone semigroups and show that approximation 
 schemes based on the same infinitesimal behaviour lead to the same 
 semigroup. Our results are applied to semigroups related to stochastic 
 optimal control problems in finite and infinite-dimensional settings as 
 well as Wasserstein perturbations of transition semigroups. 
 
 \smallskip\noindent
 \emph{Key words:} Convex monotone semigroup, $\Gamma$-convergence,
 Lipschitz set, comparison principle, Chernoff  approximation, optimal control, 
 Wasserstein perturbation. \\
 \smallskip\noindent
 \emph{MSC 2020:} Primary 47H20; 47J25; Secondary 35K55; 35B20; 49L20.	
\end{abstract}

\vspace*{-1cm}
\maketitle
\vspace*{-1cm}

\thispagestyle{empty}
\tableofcontents
\vspace*{-1cm}

\section{Introduction}

In this article, we address the question whether strongly continuous convex
monotone semigroups are uniquely determined via their infinitesimal generators. 
It is a classical result that a strongly continuous linear semigroup $(S(t))_{t\geq 0}$ on a Banach
space $\X$ satisfies $S(t)\colon D(A)\to D(A)$ for all $t\geq 0$
and that the unique solution of the abstract Cauchy problem $\partial_t u(t)=Au(t)$ 
with $u(0)=x\in D(A)$ is given by $u(t)=S(t)x$ for all $t\geq 0$. Here, the domain $D(A)$ consists of all 
$x\in\X$ such that the limit $Ax:=\lim_{h\downarrow 0}\frac{S(h)x-x}{h}\in\X$
exists. For more details, we refer to Pazy~\cite{Pazy83} and Engel and Nagel~\cite{EN00}. 
These results can be extended to nonlinear semigroups which are generated by m-accretive 
or maximal monotone operators, see Barbu~\cite{Barbu10}, B\'enilan and Crandall~\cite{BC91}, 
Br\'ezis~\cite{Brezis71}, Crandall and Liggett~\cite{CL71} and Kato~\cite{Kato67}. 
While this approach closely resembles the theory of linear semigroups, the definition of 
the nonlinear resolvent typically requires the existence of a unique classical solution of 
a corresponding fully nonlinear elliptic PDE. As pointed out in Evans~\cite{Evans87} and 
Feng and Kurtz~\cite{FK06}, it is, in general, a delicate issue to verify the necessary regularity of classical solutions. This observation was, among others, one of the motivations for the introduction 
of viscosity solutions, see Crandall et al.~\cite{CIL92}, Crandall and Lions~\cite{CL83}
and Lions~\cite{Lions82}. The key ideas in order to obtain uniqueness of viscosity solutions 
are local comparisons with a sufficiently large class of smooth test functions and regularizations 
by introducing additional viscosity terms.\ For the relation between viscosity solutions and semigroups, we refer to Alvarez et al.~\cite{AGL93} and Biton~\cite{Biton01}, where the authors provide axiomatic foundations of viscosity solutions to fully nonlinear second-order PDEs based on suitable regularity and locality assumptions for monotone semigroups on spaces of continuous functions. While these works mainly focus on the existence and axiomatization 
of second-order differential operators through semigroups, the uniqueness of the associated 
semigroups in terms of their generator is not yet fully clarified, cf.\ the discussion 
in~\cite[Section 5]{Biton01}. We refer to Fleming and Soner~\cite[Chapter~II.3]{FS06} 
for a broad discussion on the relation between semigroups and viscosity solutions and 
to Yong and Zhou \cite[Chapter~4]{YZ99} for an illustration of the interplay between 
the dynamic programming principle and viscosity solutions in a stochastic optimal 
control setting. 

In the spirit of traditional semigroup theory, this article is concerned with comparison principles for strongly continuous convex monotone semigroups resembling the classical analogue from the linear case.\ 
In order to uniquely characterize semigroups via their infinitesimal generator, 
it is crucial to show that the domain is invariant under the semigroup but 
for nonlinear semigroups this statement might be wrong, see, e.g., Crandall and 
Liggett~\cite[Section~4]{CL71} and Denk et.\ al.~\cite[Example~5.2]{DKN21+}.\ 
On the one hand, for strongly continuous convex monotone semigroups defined 
on spaces with order continuous norm such as $L^p$-spaces and Orlicz hearts, 
the domain is invariant and the unique solution of the of 
the abstract Cauchy problem \[\partial_t u(t)=Au(t)\quad\text{with}\quad u(0)=x\in D(A)\]
is given by $u(t)=S(t)x$ for all $t\geq 0$, see Denk et. al.~\cite{DKN21}. However, 
this approach uses rather restrictive conditions on the nonlinear generator, 
see Blessing and Kupper~\cite{BK22}. On the other hand, 
for spaces of continuous functions, the domain is typically not invariant
under the semigroup and we have to find an invariant set on which we can define
a weaker notion of the generator that uniquely determines the semigroup. 
In Blessing and Kupper, see~\cite{BK23, BK22}, invariant Lipschitz sets are used 
in order to construct nonlinear semigroups and invariant symmetric Lipschitz sets 
provide regularity results in Sobolev spaces.\ In this article, we introduce the invariant 
upper Lipschitz set $\LSplus$ on which we can define the upper $\Gamma$-generator by
\[ \AG^+ f:=\Glimsup_{h\downarrow 0}\frac{S(h)f-f}{h} \quad\mbox{for all } f\in\LSplus. \]
Since $\AG^+ f$ is, in general, only upper semicontinuous, we have to extend $S(t)$ from continuous 
to upper semicontinuous functions in order to define the term $S(t)\AG^+ f$, see Subsection~\ref{sec:convex}.
The equivalence between continuity from above, continuity w.r.t. the mixed 
topology and upper semi-continuity w.r.t. $\Gamma$-convergence for families of uniformly 
bounded convex monotone operators is also the key to understand why 
$\Gamma$-convergence is a suitable choice for the definition of the generator. 

The first main result of this article is a comparison principle, which allows to understand 
strongly continuous convex monotone semigroups as minimal $\Gamma$-supersolutions
to the abstract Cauchy problem $\partial_t u(t)=\AG^+ u(t)$ with
$u(0)=f\in\LSplus$, see Theorem~\ref{thm:comp}. Moreover, strongly 
continuous convex monotone semigroups are uniquely determined by their upper 
$\Gamma$-generators defined on their upper Lipschitz sets, see Theorem~\ref{thm:comp2}.
Under additional assumptions we further show that $\AG f=\Glim_{n\to\infty}\AG f_n$ 
for suitable approximating sequences $f_n\to f$ such that $(\AG^+ f_n)_{n\in\N}$ is bounded above, 
see Theorem~\ref{thm:lb}. Using regularization by convolution and truncation, we then 
establish comparison principles for convex monotone semigroups by comparing their generators
only on smooth test functions or even smooth test functions with compact support,
see Theorem~\ref{thm:conv3} and Theorem~\ref{thm:trunc}.
This leads to an explicit description of the upper $\Gamma$-generator if, for smooth functions, the 
$\Gamma$-generator is given as a convex functional of certain partial derivatives,
see Theorem~\ref{thm:dist}. It was shown in Alvarez et al.~\cite{AGL93} and Biton~\cite{Biton01} 
that this is the case for typical fully nonlinear second-order PDEs. 
We would also like to remark that our notion of a strongly continuous convex monotone semigroup 
coincides with the one in Goldys et al.\ \cite{GNR22}, where it is shown that the function 
$u(t):=S(t)f$ is a viscosity solution of the abstract Cauchy problem $\partial_t u=Au$ 
with initial condition $u(0)=f$. 
The idea of weakening topological properties of the semigroup is already present in the literature.\ Goldys and 
Kocan~\cite{GK01}, van Casteren~\cite{vanCasteren11}, Kunze~\cite{Kunze09} and 
Kraaij~\cite{Kraaij19b} study linear semigroups in strict topologies, see also 
Kraaij~\cite{Kraaij16} and Yosida~\cite{Yoshida80} for semigroups in locally convex spaces.
We further discuss this in Remark~\ref{rem:topsemigroup}.
In addition, equicontinuity in the strict topology is suitable for stability results. Kraaij~\cite{Kraaij22} 
provides convergence results for nonlinear semigroups based on the link between viscosity 
solutions to HJB equations and pseudo-resolvents and in~\cite{Kraaij19} the author establishes
$\Gamma$-convergence of functionals on path-spaces. 

Second, we study Chernoff-type approximation schemes for strongly continuous convex monotone semigroups
which resemble Chernoff's original work, see~\cite{Chernoff68, Chernoff74}, that 
generalizes the Trotter--Kato product formula for linear semigroups, see~\cite{Trotter59,Kato78}.
Starting with a family $(I(t))_{t\geq 0}$ of operators $I(t)\colon\Ck\to\Ck$, we construct a 
corresponding semigroup as the limit 
\begin{equation} \label{eq:intro}
 S(t)f:=\lim_{n\to\infty}I(h_n)^{k_n^t}f\in\Ck,
\end{equation}
where $(h_n)_{n\in\N}\subset (0,\infty)$ converges to zero and $k_n^t:=\max\{k\in\N_0\colon kh_n\leq t\}$.  
Denoting by $A$ the generator of $(S(t))_{t\geq 0}$, it holds
\[ Af=I'(0)f:=\lim_{h\downarrow 0}\frac{I(h)f-f}{h}\in\Ck \]
for all $f\in\Ck$ such that the previous limit exists, see Theorem~\ref{thm:I}.
For instance, in case of the Trotter--Kato formula, the choice $I(t):=e^{tA_1}e^{tA_2}$ 
yields a semigroup with generator $A=A_1+A_2$. Since all limits in the present framework
are taken w.r.t. the mixed topology, this work extends previous results that are either 
based on norm convergence, see Blessing and Kupper~\cite{BK23}, or monotone convergence, 
see~\cite{Nisio76, NR21, DKN20,BEK21, FKN23}. A priori, the limit in equation~\eqref{eq:intro}
only exists for a subsequence and the derivative $I'(0)f$ can only be computed for smooth 
functions. Hence, in order to apply the approximation results from Section~\ref{sec:approx}, 
we introduce the so-called approximation set, see Theorem~\ref{thm:approx}. We further 
identify explicit conditions such that the semigroup $(S(t))_{t\geq 0}$ is uniquely determined
by $I'(0)f$, see Theorem~\ref{thm:I3}. In particular, the limit in equation~\eqref{eq:intro} 
does not depend on the choice the sequence $(h_n)_{n\in\N}$ and different approximation 
schemes with the same infinitesimal behaviour lead to the same semigroup. 

In Section~\ref{sec:examples}, we explain how the abstract results can be applied
in different examples.\ In Subsection~\ref{sec:control}, we show that stochastic optimal control 
problems can be approximated by using piecewise constant controls and a discrete noise. Moreover, 
an explicit computation of the so-called symmetric Lipschitz set yields a regularity result 
for the corresponding fully nonlinear PDE. In Subsection~\ref{sec:infdim}, we extend the 
previous approximation result to an infinite-dimensional setting.\ In Subsection~\ref{sec:wasserstein}, 
we show that non-parametric Wasserstein perturbations of transition semigroups asymptotically 
coincide with perturbations which have a finite-dimensional parameter space.\ As a byproduct, 
we recover the Talagrand $T_2$ inequality for the normal distribution.

\section{Setup and notation}

Throughout, let $(X,d)$ be a complete separable metric space and $\kappa\colon X\to (0,\infty)$
be a bounded continuous function.\ Functions $f,g\colon X\to\Rbar$ are ordered pointwise and 
we define $f\vee g:=\max\{f,g\}$, $f\wedge g:=\min\{f,g\}$, $f^+:=f\vee 0$ and $f^-:=-(f\wedge 0)$ 
with the convention $-(-\infty):=\infty$. Defining 
\[ \|f\|_\kappa:=\sup_{x\in X}|f(x)|\kappa(x)\in [0,\infty]
	\quad\mbox{for all } f\colon X\to\Rbar, \]
we denote by $\Ck$ the space of all continuous functions $f\colon X\to\R$ with $\|f\|_\kappa<\infty$, 
by $\Uk$ the set of all upper semicontinuous functions $f\colon X\to\Rinf$ with $\|f^+\|_\kappa<\infty$ 
and by $\Bk$ the space of all Borel measurable functions $f\colon X\to\Rbar$ with $\|f^+\|_\kappa<\infty$.
The space $\Ck$ is endowed with the mixed topology between $\|\cdot\|_\kappa$ and the topology 
of uniform convergence on compacts sets, i.e., the strongest locally convex topology on $\Ck$ 
that coincides on $\|\cdot\|_\kappa$-bounded sets with the topology of uniform convergence on 
compact subsets. It is well-known, see~\cite[Proposition~B.2]{GNR22}, that $f_n\to f$ if and only if 
\[ \sup_{n\in\N}\|f_n\|_\kappa<\infty \quad\mbox{and}\quad
\lim_{n\to\infty}\|f-f_n\|_{\infty,K}=0 \]
for all compact subsets $K\subset X$, where $\|f\|_{\infty,K}:=\sup_{x\in K}|f(x)|$. 
Subsequently, if not stated otherwise, all limits in $\Ck$ are understood w.r.t.\ the mixed 
topology and we write $K\Subset X$ if $K$ is a compact subset of $X$. An operator 
$\Phi\colon \Ck\to \Ck$ is called monotone if $\Phi f\leq \Phi g$ for all $f,g\in\Ck$ with 
$f\leq g$ and convex if $\Phi(\lambda f+(1-\lambda)g)\leq\lambda\Phi f+(1-\lambda)\Phi g$
for all $f,g\in\Ck$ and $\lambda\in [0,1]$.\ Although the mixed topology is not metrizable, 
for monotone operators $\Phi \colon \Ck\to \Ck$, sequential continuity is still equivalent 
to continuity and continuity is equivalent to continuity on $\|\cdot\|_\kappa$-bounded subsets, 
see~\cite{Nendel22}.\ The mixed topology also belongs to the class of strict topologies, 
see~\cite{FGH72,Kunze09,Sentilles72}. For more details, we refer to~\cite{Wheeler83, Wiweger61} and Appendix~B in \cite{GNR22}.
Note that the set $\Uk$ consists of all functions $f\colon X\to\Rbar$ such that there exists 
a sequence $(f_n)_{n\in\N}\subset\Ck$ that decreases pointwise to $f$. Indeed,
\[ f(x)=\lim_{n\to\infty}\sup_{y\in X}\frac{1}{\kappa(x)}\big(\max\{(f\kappa)(y),-n\}-n^2 d(x,y)\big), \]
showing that $\Uk=(\Ck)_\delta:=\{\inf_{n\in\N}f_n\colon (f_n)_{n\in \N}\subset\Ck\}$. 
Subsequently, we write $f_n\downarrow f$ if $(f_n)_{n\in\N}\subset\Uk$ decreases pointwise 
to $f\in\Uk$.\ A set $F\subset\Bk$ is called bounded if $\sup_{f\in F}\|f\|_\kappa<\infty$ 
and bounded above if $\sup_{f\in F}\|f^+\|_\kappa<\infty$. For every $r\ge 0$, let
\[ B_{\Ck}(r):=\{f\in \Ck\colon\|f\|_\kappa\leq r\}. \]
Likewise, we define $B_{\Uk}(r):=\{f\in\Uk\colon\|f\|_\kappa\leq r\}$ and 
$B_{\Bk}(r):=\{f\in\Bk\colon\|f\|_\kappa\leq r\}$.

\begin{definition}
 For every sequence $(f_n)_{n\in\N}\subset \Uk$, which is bounded above, we define
 \[ \Big(\Glimsup_{n\to\infty}f_n\Big)(x)
 	:=\sup\Big\{\limsup_{n\to\infty}f_n(x_n)\colon
 	(x_n)_{n\in\N}\subset X\mbox{ with } x_n\to x\Big\}\in\Rbar \] 
 for all $x\in X$.  Moreover, we say that $f=\Glim_{n\to\infty}f_n$ with $f\in\Uk$ if, for every $x\in X$,
 \begin{itemize}
  \item $f(x)\geq\limsup_{n\to\infty}f_n(x_n)$ for every sequence $(x_n)_{n\in\N}\subset X$ with $x_n\to x$,
  \item $f(x)=\lim_{n\to\infty}f_n(x_n)$ for some sequence $(x_n)_{n\in\N}\subset X$ with $x_n\to x$.
 \end{itemize}
 For every $t\geq 0$ and $(f_s)_{s\geq 0}\subset\Uk$ being bounded above, we define 
 \[ \Glimsup_{s\to t}f_s
 	:=\sup\Big\{\Glimsup_{n\to\infty}f_{s_n}\colon 0\leq s_n\to t\Big\}\in\Uk. \]
 In addition, we write $f=\Glim_{s\to t}f_s$ with $f\in\Uk$ if $f=\Glim_{n\to\infty}f_{s_n}$ 
 for all sequences $(s_n)_{n\in\N}\subset [0,\infty)$ with $s_n\to t$. 
\end{definition}

For further details on $\Gamma$-convergence, we refer to Appendix~\ref{app:gamma}.
The different convergence concepts in this article are related as follows. For every sequence 
$(f_n)_{n\in\N}\subset\Ck$ and $f\in\Ck$, $\|f_n-f\|_\kappa\to 0$ implies $f_n\to f$, 
$f_n\to f$ implies $f=\Glim_{n\to\infty}f_n$ and Dini's theorem guarantees that $f_n\downarrow f$ 
implies $f_n\to f$. In addition, if the sequence $(f_n)_{n\in\N}$ is bounded, then the following 
statements are equivalent:
\begin{enumerate}
 \item $f_n\to f$,
 \item $f_n\to f$ uniformly on compacts,
 \item $f_n(x_n)\to f(x)$ for all $x\in X$ and sequences $(x_n)_{n\in \N}\subset X$ with $x_n\to x$,
 \item $f=\Glimsup_{n\to\infty}f_n=-(\Glimsup_{n\to\infty} (-f_n))$.
\end{enumerate}
The equivalence between~(i) and~(iv) follows from Lemma~\ref{lem:fe}(ii) and Dini's theorem.
Indeed, since $f\in\Ck$, it holds $\Glimsup_{n\to\infty}f_n\leq f$ if and only if $\|(f_n-f)^+\|_{\infty,K}\to 0$ 
for all $K\Subset X$ and $\Glimsup_{n\to\infty}(-f_n)\leq -f$ if and only if $\|(f-f_n)^+\|_{\infty,K}\to 0$ 
for all $K\Subset X$. While~(iii) is equivalent to $f=\Glim_{n\to\infty}f_n=-\Glim_{n\to\infty}(-f_n)$,
in general, only the inequality $\Glim_{n\to\infty}f_n\geq-\Glim_{n\to\infty}(-f_n)$ is valid.

\subsection{Lipschitz sets, $\Gamma$-generators and basic continuity properties}

The (infinitesimal) generator of a family $(I(t))_{t\geq 0}$ of operators $I(t)\colon\Ck\to\Ck$ 
is defined by
\[ A\colon D(A)\to\Ck,\; f\mapsto\lim_{h\downarrow 0}\frac{I(h)f-f}{h}, \]
where the domain $D(A)$ consists of all $f\in\Ck$ such that the previous limit exists w.r.t.\ 
the mixed topology.\ While the domain of any strongly continuous linear semigroup is invariant, 
the same does not necessarily hold for strongly continuous convex monotone semigroups. We therefore
introduce the upper Lipschitz set which is invariant and will serve as domain for the upper 
$\Gamma$-generator.

\begin{definition} \label{def:Lset}
 Let $(I(t))_{t\geq 0}$ be a family of operators $I(t)\colon\Ck\to\Ck$. The upper Lipschitz set $\LIplus$ 
 consists of all $f\in\Ck$ such that
 \[ \limsup_{h\downarrow 0}\left\|\frac{(I(h)f-f)^+}{h}\right\|_\kappa<\infty \]
 which is equivalent to the existence of $c\geq 0$ and $h_0>0$ with $\|(I(h)f-f)^+\|_\kappa\leq ch$
 for all $h\in [0,h_0]$. Furthermore, the Lipschitz set $\LI$ consists of all $f\in\Ck$ such that
 \[ \limsup_{h\downarrow 0}\left\|\frac{I(h)f-f}{h}\right\|_\kappa<\infty \]
 which is equivalent to the existence of $c\geq 0$ and $h_0>0$ with $\|I(h)f-f\|_\kappa\leq ch$
 for all $h\in [0,h_0]$.\ The symmetric Lipschitz set is defined by $\LIsym:=\{f\in\LI\colon -f\in\LI\}$.
\end{definition}

From the convexity estimates in Appendix~\ref{sec:convestim}, we obtain the following invariance result.

\begin{lemma} \label{lem:inv}
 Let $(I(t))_{t\geq 0}$ be a family of convex monotone operators $I(t)\colon\Ck\to\Ck$ with
 $I(s)I(t)f=I(t)I(s)f$ for all $f\in\Ck$ and $s,t\geq 0$. Then, 
 \[ I(t)\colon\LI\to\LI \quad\mbox{and}\quad I(t)\colon\LIplus\to\LIplus \quad\mbox{for all } t\geq 0. \]
\end{lemma} 
\begin{proof} 
 For every $t\geq 0$, $f\in\Ck$ and $h\in (0,1]$, it follows from $I(h)I(t)f=I(t)I(h)f$, 
 Lemma~\ref{lem:lambda} and the monotonicity of $I(t)$ that
 \begin{align*}
  I(t)I(h)f-I(t)\left(\frac{f-I(h)f}{h}+I(h)f\right)
  &\leq \frac{I(t)I(h)f-I(t)f}{h}=\frac{I(h)I(t)f-I(t)f}{h} \\
  &\leq I(t)\left(\frac{I(h)f-f}{h}+f\right)-I(t)f \\
  &\leq I(t)\left(\frac{(I(h)f-f)^+}{h}+f\right)-I(t)f. 
 \end{align*}
 If $f\in\LI$, we can choose $c\geq 0$ and $h_0\in (0,1]$ with $\|I(h)f-f\|_\kappa\leq ch$ 
 for all $h\in [0,h_0]$. Due to the previous estimate and Lemma~\ref{lem:lip}(ii), there 
 exists $c'\geq 0$ with 
 \[ \left\|\frac{I(t)I(h)f-I(t)f}{h}\right\|_\kappa\leq cc' \quad\mbox{for all } h\in (0,h_0]. \]
 If $f\in\LIplus$, one can argue similarly. 
\end{proof}

The invariance of $\LIsym$ can not always be guaranteed. However, in several examples, the 
symmetric Lipschitz set is invariant and can be determined explicitly which implies that certain 
regularity properties of the initial function $f$ are transferred to $I(t)f$ for all $t\geq 0$, 
see~\cite{BK22, BK23} and Subsection~\ref{sec:control}.

\begin{remark}\label{rem.favard}
 We briefly discuss the relation between Lipschitz sets and Favard spaces. Let $(T(t))_{t\geq 0}$ 
 be a be a strongly continuous semigroup of bounded linear operators on a Banach space $\X$. 
 For simplicity, we assume that there exist $c\geq 0$ and $\omega<0$ with $\|T(t)x\|\leq c e^{\omega t}\|x\|$ 
 for all $x\in\X$. Then, the set
 \[ F_1:=\Big\{x\in\mathcal{X}\colon\sup_{h>0}\Big\|\frac{T(h)x-x}h\Big\|<\infty\Big\} \]
 is called Favard space or saturation class, see~\cite[Section~2.1]{BN71} and~\cite[Section~II.5.b]{EN00}. 
 Denoting by $B$ the norm generator of $(T(t))_{t\geq 0}$, it is known that $F_1=D(B)$ holds 
 if $\X$ is reflexive, see~\cite[Theorem~2.1.2]{BN71}. If $(T(t))_{t\geq 0}$ is holomorphic, 
 then $F_1=(\X,D(B))_{1,\infty}$, see~\cite[Proposition~2.2.2]{Lunardi95}, where
 $(\cdot,\cdot)_{1,\infty}$ stands for the real interpolation functor. For non-reflexive 
 $\mathcal{X}$, an explicit description of $F_1$ seems to be unknown in many cases.
\end{remark}

On the upper Lipschitz set, we now define the so-called upper $\Gamma$-generator 
which is usually not a continuous function anymore but still upper semicontinuous. 
In Section~\ref{sec:approx}, we give conditions under which the upper $\Gamma$-generator 
coincides with the $\Gamma$-generator.

\begin{definition} \label{def:GammaGen}
Let $(I(t))_{t\geq 0}$ be a family of operators $I(t)\colon\Ck\to\Ck$.\ The upper $\Gamma$-generator 
is defined by
 \[ \AG^+\colon\LIplus\to\Uk,\; f\mapsto\Glimsup_{h\downarrow 0}\frac{I(h)f-f}{h}. \]
 Furthermore, the $\Gamma$-generator is defined by
 \[ \AG\colon D(\AG)\to\Uk,\; f\mapsto\Glim_{h\downarrow 0}\frac{I(h)f-f}{h}, \]
 where the domain $D(\AG)$ consists of all $f\in\LIplus$ such that the previous limit exists.
\end{definition}

We conclude this section with the discussion of basic continuity properties for families 
of convex monotone operators.\ The results transfer to single operators by considering 
families that are constant in time, see also~\cite{Nendel22} for a more detailed discussion.\ 
Note that, by Lemma~\ref{lem:lip}, for a family $(I(t))_{t\geq 0}$ of convex monotone 
operators $I(t)\colon\Ck\to\Ck$, the following statements are equivalent:
\begin{enumerate}
 \item $\sup_{t\in [0,T]}\|I(t)\tfrac{r}{\kappa}\|_\kappa<\infty$ for all $r,T\geq0$.
 \item $\sup_{t\in [0,T]}\sup_{f\in B_{\Ck}(r)}\|I(t)f\|_\kappa<\infty$ for all $r,T\geq0$.
 \item For every $r,T\geq 0$, there exists $c\ge 0$ with 
  \[ \|I(t)f-I(t)g\|_\kappa\le c\|f-g\|_\kappa \quad\mbox{for all } t\in[0,T] \mbox{ and } f,g\in B_{\Ck}(r). \]
\end{enumerate}
Moreover, $I(t)f_n\downarrow 0$ for all $(f_n)_{n\in \N}\subset \Ck$ with $f_n\downarrow 0$ 
implies $I(t)0=0$ for all $t\geq 0$.

\begin{lemma}  \label{lem:uniform}
Let $(I(t))_{t\geq 0}$ be a family of convex monotone operators $I(t)\colon\Ck\to\Ck$ 
satisfying $I(t)0=0$, $\sup_{s\in [0,T]}\|I(s)\frac{r}{\kappa}\|_\kappa<\infty$ and
$\Glimsup_{s\to t}I(s)f\leq I(t)f$ for all $r,t,T\geq 0$ and $f\in\Ck$. 
Then, the following statements are equivalent:
\begin{enumerate}
\item It holds $I(t)f_n\downarrow 0$ for all $t\geq 0$ and $(f_n)_{n\in\N}\subset \Ck$ with $f_n\downarrow 0$.
\item For every $T\geq 0$, $K\Subset X$ and $(f_n)_{n\in\N}\subset \Ck$ with $f_n\downarrow 0$, it holds
 \[ \sup_{(t,x)\in [0,T]\times K}(I(t)f_n)(x)\downarrow 0 \quad\mbox{as } n\to\infty. \]
\item For every $\epsilon>0$, $r,T\geq 0$ and $K\Subset X$, there exist $K'\Subset X$ and $c\geq 0$ with
 \[ \|I(t)f-I(t)g\|_{\infty, K}\leq c\|f-g\|_{\infty, K'}+\epsilon \]
 for all $t\in [0,T]$ and $f,g\in B_{\Ck}(r)$. 	
\end{enumerate}
\end{lemma}
\begin{proof}
 Since the mapping $[0,\infty)\times X\to\R,\; (t,x)\mapsto (I(t)f)(x)$ is upper semicontinuous
 for all $f\in\Ck$, Dini's theorem implies that~(i) and~(ii) are equivalent. Let $T\geq 0$ and 
 $K\Subset X$. For every $(t,x)\in [0,T]\times K$, we define 
 \[ \phi_{t,x}\colon\Ck\to\R,\; f\mapsto (I(t)f)(x). \]
 Then, the equivalence between~(ii) and~(iii) follows from Lemma~\ref{lem:cont.app}.
\end{proof}

\begin{remark}\label{rem:equicont}
 A family $(I(t))_{t\geq 0}$ satisfying the assumptions and the equivalent conditions (i)-(iii) 
 of Lemma~\ref{lem:uniform} is locally uniformly equicontinuous on bounded sets in the mixed topology. 
 Indeed, following Appendix~B in~\cite{GNR22}, the mixed topology is generated by the seminorms 
 $p_{\kappa,(K_n),(a_n)}(f):=\sup_{n\in\N}\sup_{x\in K_n} a_n|f(x)|\kappa(x)$, where $K_n\Subset X$ 
 and $(a_n)_{n\in\N}\subset (0,\infty)$ satisfies $a_n\to 0$. By considering the functionals 
 $\tilde{\phi}_{t,x}\colon\Ck\to\R$ given by $\tilde{\phi}_{t,x}(f):=(I(t)f)(x)\kappa(x)$, we obtain 
 that the conditions (i)-(iii) are equivalent to the following statement: for every $\epsilon>0$ and 
 $r,T\geq 0$, there exists $\tilde{c}\geq 0$ such that, for every $K\Subset X$, there exists $K'\Subset X$ with
 \[ \|I(t)f-I(t)g\|_{\kappa, K}\leq \tilde c\|f-g\|_{\kappa, K'}+\epsilon \]
 for all $t\in [0,T]$ and $f,g\in B_{\Ck}(r)$, where $\|f\|_{\kappa,K}:=\sup_{x\in K}|f(x)|\kappa(x)$. 
 Since $\tilde{c}$ does not depend on $K$, for every $\epsilon>0$, $r,T\geq 0$ and seminorm $p_{\kappa,(K_n),(a_n)}$, 
 there exist $\delta>0$ and another seminorm $p_{\kappa,(C_n),(b_n)}$ such that
 \[ p_{\kappa,(K_n),(a_n)}(I(t)f-I(t)g)\leq\epsilon \]
 for all $t\in [0,T]$ and $f,g\in B_{\Ck}(r)$ with $p_{\kappa,(C_n),(b_n)}(f-g)\leq\delta$.
\end{remark}

\section{Convex monotone semigroups, comparison and uniqueness}\label{sec:convex.comparison}

\subsection{Convex monotone semigroups}
\label{sec:convex}

The following definition characterizes the main object of this article.\ We emphasize that
requiring norm continuity of the mappings $[0,\infty)\to\Ck$, $t\mapsto S(t)f$ for all 
$f\in\Ck$ would be too strong even in the linear case. Hence, we define strong continuity 
w.r.t. the mixed topology and, for most of the results, we only require the weaker conditions~\ref{S3} 
and~\ref{S4}.

\begin{definition} \label{def:semigroup}
 A family $(S(t))_{t\geq 0}$ of operators $S(t)\colon\Ck\to\Ck$ is called convex monotone 
 semigroup on $\Ck$ if the following conditions are satisfied:
 \begin{enumerate}[label=(S\arabic*)]
  \item \label{S1} $S(t)$ is convex and monotone with $S(t)f_n\downarrow 0$ for all $t\geq 0$ and
   $f_n\downarrow 0$.
  \item \label{S2} $S(0)f=f$ and $S(s+t)f=S(s)S(t)f$ for all $s,t\geq 0$ and $f\in \Ck$.
  \item \label{S3} $\sup_{t\in [0,T]}\|S(t)\frac{r}{\kappa}\|_\kappa<\infty$
   for all $r,T\geq 0$.
  \item \label{S4} $\Glimsup_{s\to t}S(s)f\leq S(t)f$ for all 
   $t\geq 0$ and $f\in\Ck$.
 \end{enumerate}
 Furthermore, a convex monotone semigroup $(S(t))_{t\geq 0}$ on $\Ck$ is strongly continuous 
 if $f=\lim_{t\to 0}S(t)f$ for all $f\in \Ck$.
\end{definition}

The following lemma shows that strong continuity transfers to all positive times.

\begin{lemma} \label{lem:time.cont}
Let $(S(t))_{t\geq0}$ be a family of operators $S(t)\colon \Ck\to \Ck$ that satisfies \ref{S1} and \ref{S2}.\ Then, the following statements are equivalent:
\begin{enumerate}
  \item $(S(t))_{t\geq0}$ is a stongly continuous convex monotone semigroup on $\Ck$. 
  \item $S(t)f=\lim_{s\to t}S(s)f$ for all $t\geq 0$ and $f\in\Ck$.
  \item $S(t_n)f_n\to S(t)f$ for all sequences $(t_n)_{n\in\N}\subset [0,\infty)$ and 
   $(f_n)_{n\in\N}\subset\Ck$ such that $t:=\lim_{n\to\infty}t_n$ and $f:=\lim_{n\to\infty}f_n$ 
   exist. 
 \end{enumerate}
\end{lemma}
\begin{proof}
 First, we show that~(i) implies~(ii). Let $t\geq 0$ and $f\in\Ck$. Condition~\ref{S2} implies 
 $S(s)f-S(t)f=S(s)f-S(s)S(t-s)f$ for all $s\in [0,t]$. Moreover, for every $\epsilon>0$ and 
 $K\Subset X$, Lemma~\ref{lem:uniform} guarantees the existence of $c\geq 0$ and $K'\Subset X$ with  
 \[ \|S(s)f-S(s)S(t-s)f\|_{\infty,K}\leq c\|S(t-s)f-f\|_{\infty,K'}+\epsilon \quad\mbox{for all } s\in [0,t]. \]
 It follows from~(i) that $\|S(t-s)f-f\|_{\infty,K'}\to 0$ showing that $S(s)f\to S(t)f$ as $s\to t$. 
 For $s>t$, one can argue similarly.
 
 Second, we show that~(ii) implies~(i).\ It is clear that~(ii) implies $\lim_{t\downarrow 0} S(t)f=f$ for all $f\in \Ck$. It remains to prove that~(ii) implies \ref{S3} and \ref{S4}. To that end, let $r,T\geq0$ and assume towards a contradiction that there existed a sequence $(t_n)_{n\in \N}\subset [0,T]$ with $\|S(t_n)\frac{r}{\kappa}\big\|_\kappa\geq n$ for all $n\in \N$. Since $[0,T]$ is compact, by potentially passing to a subsequence, we may w.l.o.g.\ assume that $t_n\to t\in [0,T]$, so that $S(t_n)\frac{r}{\kappa}\to S(t)\frac{r}{\kappa}$. The latter implies that $\sup_{n\in \N} \|S(t_n)\frac{r}{\kappa}\big\|_\kappa<\infty$, which leads to the desired contradiction.\ We have therefore shown that \ref{S3} is satisfied and \ref{S4} now follows as a direct consequence of~(ii). 
 
 Third, it is clear that~(iii) implies~(ii).\ Since the equivalence between~(i) and~(ii) is already established, it remains to show that~(i) together with~(ii) implies~(iii). Let $(t_n)_{n\in\N}\subset [0,\infty)$ and 
 $(f_n)_{n\in\N}\subset\Ck$ such that $t:=\lim_{n\to\infty}t_n$ and $f:=\lim_{n\to\infty}f_n$ 
 exist. Let $\epsilon>0$ and $K\Subset X$. By Lemma~\ref{lem:uniform}, there exist $c\geq 0$ 
 and $K'\Subset X$ with 
 \begin{align*} 
  \|S(t_n)f_n-S(t)f\|_{\infty,K}
  &\leq\|S(t_n)f_n-S(t_n)f\|_{\infty,K}+\|S(t_n)f-S(t)f\|_{\infty,K} \\
  &\leq c\|f-f_n\|_{\infty,K'}+\epsilon+\|S(t_n)f-S(t)f\|_{\infty,K}
  \quad\mbox{for all } n\in\N.
 \end{align*}
 It follows from~(ii) that $\|S(t_n)f-S(t)f\|_{\infty,K}\to 0$ showing that $S(t_n)f_n\to S(t)f$.
\end{proof}

Let $(S(t))_{t\geq 0}$ by a convex monotone semigroup on $\Ck$ with generator $A$ and 
upper $\Gamma$-generator $\AG^+$.\ While $Af\in\Ck$ holds by definition, the function 
$\AG^+ f$ is not necessarily continuous and may take the value $-\infty$.\ Hence, in 
order to define the term $S(t)\AG^+ f$, an extension of $S(t)$ to $\Uk$ is necessary.\ 
Moreover, the argumentation in this article relies heavily on the fact that the extension 
is $\Gamma$-upper semicontinuous in time and continuous from above.

\begin{theorem} \label{thm:ext}
 Let $(S(t))_{t\geq 0}$ be a convex monotone semigroup on $\Ck$.\ Then, for every $t\geq 0$, 
 there exists a unique extension $S(t)\colon\Uk\to\Uk$ which is continuous from above\footnote{
 It holds $S(t)f_n\downarrow S(t)f$ for all $(f_n)_{n\in\N}\subset\Uk$ and $f\in\Uk$ with $f_n\downarrow f$.}.
 The family $(S(t))_{t\geq 0}$ of extended operators is a convex monotone semigroup on $\Uk$, 
 i.e., it satisfies the conditions~\ref{S1}-\ref{S4} with $\Uk$ instead of $\Ck$. In addition,
 for every $t\ge 0$ and $x\in X$, the functional $\Uk\to\Rbar,\; f\mapsto (S(t)f)(x)$ can be 
 further extended to the space $\Bk$ such that, for every $r,T\geq 0$, $\epsilon>0$ and $K\Subset X$, 
 there exists $K_1\Subset X$ with
 \[ \sup_{(t,x)\in [0,T]\times K}\big(S(t)\big(\tfrac{r}{\kappa}\one_{K_1^c}\big)\big)(x)\leq\epsilon. \]
\end{theorem}
\begin{proof}
 First, we extend the family $(S(t))_{t\geq 0}$ from $\Ck$ to $\Uk$.\ Let $t\geq 0$ and $x\in X$.\ 
 By condition~\ref{S1}, the functional
 \[ \phi_{t,x}\colon\Ck\to\R,\; f\mapsto (S(t)f)(x) \] 
 satisfies the assumptions of Theorem~\ref{thm:dual}.\ Define
 $(S(t)f)(x):=\overline{\phi_{t,x}}(f)$ for all $f\in\Uk$, where $\overline{\phi_{t,x}}$ denotes 
 the unique extension of $\phi_{t,x}$ from Theorem~\ref{thm:dual}(ii).\ The family of extended 
 operators satisfies the conditions~\ref{S1},~\ref{S2} and~\ref{S3} with $\Uk$ instead of $\Ck$ 
 and is the unique extension which is continuous from above.\ These properties follow from the fact
 that, for every $f\in\Uk$, there exists a sequence $(f_n)_{n\in\N}\subset\Ck$ with $f_n\downarrow f$.
 In order to verify condition~\ref{S4}, let $t\geq 0$, $f\in\Uk$ and $x\in X$. Let 
 $(t_n)_{n\in\N}\subset [0,\infty)$ and $(x_n)_{n\in\N}\subset X$ be sequences with $t_n\to t$ and 
 $x_n\to x$. We use Theorem~\ref{thm:dual}(ii) and condition~\ref{S4} on $\Ck$ to estimate
 \begin{align*}
  \limsup_{n\to\infty}\,(S(t_n)f)(x_n)
  &=\inf_{n\in\N}\sup_{k\geq n}\inf\big\{\big(S(t_k)g\big)(x_k)\colon g\in\Ck,\, g\geq f\big\} \\
  &\leq\inf\Big\{\limsup_{n\to\infty}\big(S(t_n)g\big)(x_n)\colon g\in\Ck,\, g\geq f\Big\} \\
  &\leq\inf\big\{\big(S(t)g\big)(x)\colon g\in\Ck,\, g\geq f\big\}=(S(t)f)(x).
 \end{align*}
 
 Second, we define $(S(t)f)(x):=\widehat{\phi_{t,x}}(f)$ for all $t\geq 0$, $f\in\Bk$ and
 $x\in X$, where $\widehat{\phi_{t,x}}$ denotes the extension from Theorem~\ref{thm:dual}(iii). 
 Let $r,T\geq 0$, $\epsilon>0$ and $K\Subset X$. It follows from Theorem~\ref{thm:dual}(iii) that
 \[ 0\leq\widehat{\phi_{t,x}}\big(\tfrac{r}{\kappa}\one_{K_1^c}\big)
 	\leq\sup_{\mu\in M}\mu\big(\tfrac{r}{\kappa}\one_{K_1^c}\big) \]
 for all $(t,x)\in [0,T]\times K$ and $K_1\Subset X$, where 
 \[ M:=\bigcup_{(t,x)\in [0,T]\times K}\Big\{\mu\in\ca\colon\phi_{t,x}^*(\mu)
    \leq\phi_{t,x}\big(\tfrac{2r}{\kappa}\big)-2\phi_{t,x}\big(-\tfrac{r}{\kappa}\big)\Big\} \] 
 for the convex conjugate $\phi_{t,x}^*$ as defined in equation~\eqref{def:convconjugate}.\
 Furthermore, due to Dini's theorem, the functional
 \[ \phi\colon\Ck\to\R,\; f\mapsto\sup_{(t,x)\in [0,T]\times K}\phi_{t,x}(f) \]
 is well-defined, convex, monotone and continuous from above. 
 Hence,~\cite[Theorem~2.2]{BCK19} and condition~\ref{S3} imply that
 the set
 \[ M\subset\Big\{\mu\in\ca\colon\phi^*(\mu)
 	\leq\sup_{(t,x)\in [0,T]\times K}\big|\phi_{t,x}\big(\tfrac{2r}{\kappa}\big)
 	-2\phi_{t,x}\big(-\tfrac{r}{\kappa}\big)\big|\Big\} \]
 is $\sigma(\ca,\Ck)$-relatively compact.\ By Prokhorov's theorem, the set 
 $\{\mu_\kappa\colon\mu\in M\}$ is tight, where $\mu_\kappa(A):=\int_A\frac{1}{\kappa}\,\d\mu$ 
 for all $A\in\B(X)$. Hence, there exists $K_1\Subset X$ such that
 \[ (S(t)f)(x)=\widehat{\phi_{t,x}}\big(\tfrac{r}{\kappa}\one_{K_1^c}\big)\leq\epsilon 
    \quad\mbox{for all } (t,x)\in [0,T]\times K. \qedhere \]
\end{proof}

The extended convex monotone semigroup $(S(t))_{t\ge 0}$ on $\Uk$ satisfies the following 
upper semicontinuity property.

\begin{lemma} \label{lem:usc}
 Let $(S(t))_{t\geq 0}$ be an extended convex monotone semigroup on $\Uk$. Then,
  \[ \Glimsup_{n\to \infty}S(t_n)f_n\leq S(t)f \]
 for all sequences $(t_n)_{n\in\N}\subset [0,\infty)$ such that $t:=\lim_{n\to\infty}f_n$
 exists and $(f_n)_{n\in\N}\subset\Uk$ bounded above with $f:=\Glimsup_{n\to\infty}f_n$.
\end{lemma}
\begin{proof}
 In the sequel, we denote by $(S(t))_{t\geq 0}$ the extension to $\Bk$ from Theorem~\ref{thm:ext}.
 Let $\epsilon>0$ and $K\Subset X$. By Lemma~\ref{lem:fe}(ii), there exists $n_0\in\N$ with 
 \[ f_n(x)\leq\fe(x) \quad\mbox{for all } x\in K \mbox{ and } n\geq n_0. \]
 We use the inequality $\fe\geq-\frac{1}{\epsilon\kappa}$ and the monotonicity of $S(t_n)$ 
 to estimate
 \[ S(t_n)f_n\leq S(t_n)\big(\fe+\tfrac{r_\epsilon}{\kappa}\one_{K^c}\big)
 	\quad\mbox{for all } n\geq n_0, \]
 where $r_\epsilon:=\sup_{n\in\N}\|f_n^+\|_\kappa+\frac{1}{\epsilon}<\infty$. 
 Let $\lambda\in (0,1)$. The convexity of $S(t_n)$ implies
 \[ S(t_n)\big(\fe+\tfrac{r_\epsilon}{\kappa}\one_{K^c}\big)
 	\leq \lambda S(t_n)\big(\tfrac{1}{\lambda}\fe\big)
   	+(1-\lambda)S(t_n)\Big(\tfrac{r_\epsilon}{\kappa (1-\lambda)}\one_{K^c}\Big). \]
 Hence, it follows from Lemma~\ref{lem:gamma}(iv) and Theorem~\ref{thm:ext} that
 \[ \Glimsup_{n\to\infty}S(t_n)f_n\leq\lambda S(t)\big(\tfrac{1}{\lambda}\fe\big)
   	+(1-\lambda)\sup_{n\in \N}S(t_n)\Big(\tfrac{r_\epsilon}{\kappa (1-\lambda)}\one_{K^c}\Big). \]
 Since $K\Subset X$ has been arbitrary, we can use Theorem~\ref{thm:ext} to obtain
 \[ \Glimsup_{n\to\infty}S(t_n)f_n\leq\lambda S(t)\big(\tfrac{1}{\lambda}\fe\big). \]
 Furthermore, the inequality $\fe\kappa\leq\|f^+\|_\kappa+\epsilon\leq r_\epsilon$ and the 
 monotonicity of $S(t)$ imply
 \[ \Glimsup_{n\to\infty}S(t_n)f_n\leq\lambda S(t)\big(\tfrac{1}{\lambda}\fe\big)
 	\leq\lambda S(t)\Big(\fe+\big(\tfrac{1}{\lambda}-1\big)\tfrac{r_\epsilon}{\kappa}\Big). \]
 Since $S(t)$ is continuous from above, the right-hand side converges to $S(t)\fe$ as 
 $\lambda\uparrow 1$. It follows from Lemma~\ref{lem:fe}(i) that
 \[ \Glimsup_{n\to\infty}S(t_n)f_n\leq S(t)\fe\downarrow S(t)f \quad\mbox{as } \epsilon\downarrow 0. \qedhere \]
\end{proof}

\begin{remark}\label{rem:topsemigroup}
 Continuity properties for semigroups on spaces of continuous functions w.r.t. the uniform topology 
 are often too restrictive and are therefore relaxed to the mixed topology. For transition semigroups 
 of Markov processes this observation has been discussed in detail in the introduction of \cite{GNR22}.
 We point out that a convex monotone semigroup on $\Ck$ is not necessarily strongly continuous but 
 rather $\Gamma$-upper semicontinuous in time.\ Furthermore, it follows from Remark~\ref{rem:equicont} 
 that a convex monotone semigroup on $\Ck$ is locally uniformly equicontinuous on bounded sets in the 
 sense of~\cite[Definition 3.1(iii)]{GNR22}. A strongly continuous convex monotone semigroup is 
 consequently a $C_0$-semigroup in the sense of \cite[Definition 3.1]{GNR22}. In the linear case, 
 our notion of strongly continuous semigroups on $\Ck$ falls into the class of $C_0$-semigroups on 
 locally convex spaces, see \cite{Yoshida80,kraaij2016strongly}, and into the class of bi-continuous 
 semigroups, see \cite{kuhnemund2003hille}. They are also closely related to semigroups on norming dual 
 pairs together with the strict topology~\cite{Kunze09, kunze2011pettis}. In this context, the mixed 
 topology has first been used in~\cite{GK01} to study transition semigroups of solutions to SDEs with 
 additive noise and Lipschitz drift. Similar but rather pointwise convergence concepts are used in the 
 context of $\pi$-semigroups~\cite{priola1999class} and weakly continuous semigroups~\cite{Cerrai94}.\
 We also want to mention the use of strict topologies in the context of linear semigroup 
 in~\cite{Kraaij19b,vanCasteren11} and the use of $\Gamma$-convergence in~\cite{Kraaij19}. 
\end{remark}

\subsection{Comparison and uniqueness}
\label{sec:comp}

The following theorem is the main result of this section and states that the minimal 
$\Gamma$-supersolution of the abstract Cauchy problem $\partial_t u(t)=\AG^+ u(t)$ 
with $u(0)=f$ is given by $u(t)=S(t)f$ for all $t\geq 0$ and $f\in\L_S^+$. We can not 
prove uniqueness for $\Gamma$-solutions, since $\Gamma$-converge is not symmetric, 
i.e., $f=\Glim_{n\to \infty} f_n$ does not imply $-f=\Glim_{n\to \infty}(-f_n)$. 
However, if $u(t):=T(t)f$ is given by another semigroup, we can reverse the roles of 
$(S(t))_{t\geq 0}$ and $(T(t))_{t\geq 0}$ to obtain uniqueness for strongly continuous 
convex monotone semigroups.\ In Section~\ref{sec:approx}, we investigate how the 
$\Gamma$-generator can be approximated.\ In particular, for $X=\Rd$ and under additional
conditions, we obtain that strongly continuous convex monotone semigroups are uniquely 
determined by their generators evaluated at smooth functions.

\begin{theorem} \label{thm:comp}
 Let $(S(t))_{t\geq0}$ be a convex monotone semigroup on $\Ck$ and $f\in\LSplus$.
 In addition, let $0\leq T_1\leq T_2$ and $u\colon [T_1,T_2]\to\LSplus$ be a function 
 with $S(T_1)f\leq u(T_1)$, $\sup_{t\in [T_1, T_2]}\|u(t)\|_\kappa<\infty$
 and $\Glimsup_{s\to t} u(s)\leq u(t)$ for all $t\in [T_1, T_2]$. Suppose that, for every 
 $t\in [T_1,T_2)$,
 \begin{align}
  \limsup_{h\downarrow 0}\Big\|\Big(\frac{u(t+h)-u(t)}{h}\Big)^-\Big\|_\kappa &<\infty
    \quad\text{and} \label{eq:comp} \\
  \Glimsup_{h\downarrow 0}\Big(\AG^+ u(t)-\frac{u(t+h)-u(t)}{h}\Big) &\leq 0. \label{eq:comp2}
 \end{align}
 Then, it holds $S(t)f\leq u(t)$ for all $t\in [T_1,T_2]$. 
\end{theorem}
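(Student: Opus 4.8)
The plan is to run a Gronwall-type argument comparing $S(t)f$ with $u(t)$, where the "error" is measured in the $\kappa$-weighted sup-norm but only after taking positive parts, since $\Gamma$-convergence controls only one-sided behaviour. Concretely, I would fix $\epsilon>0$, a large compact $K\Subset X$, and consider the quantity
\[
 \phi(t):=\big\|\big(S(t)f-u(t)\big)^+\big\|_\kappa \quad\text{or a localized/regularized variant thereof},
\]
aiming to show $\phi\equiv 0$ on $[T_1,T_2]$. The initial condition gives $\phi(T_1)=0$ because $S(T_1)f\le u(T_1)$. I would then show that $\phi$ satisfies a differential inequality of the form $\limsup_{h\downarrow 0}\frac{\phi(t+h)-\phi(t)}{h}\le L\,\phi(t)$ for some constant $L\ge 0$ depending on $r:=\sup_{t}\|u(t)\|_\kappa\vee\|f\|_\kappa$, $T_2$, and the Lipschitz constants coming from Lemma~\ref{lem:uniform}/Lemma~\ref{lem:lip}. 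Combined with upper semicontinuity of $\phi$ in time (which follows from $\Glimsup_{s\to t}u(s)\le u(t)$, condition~\ref{S3}, and Lemma~\ref{lem:usc}), a standard Gronwall/Osgood lemma for upper semicontinuous functions then forces $\phi\equiv 0$.

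The core step is establishing the differential inequality at a fixed time $t\in[T_1,T_2)$. Here I would use the semigroup property on $\L^S_+$ together with the hypotheses~\eqref{eq:comp}--\eqref{eq:comp2}. Write, for small $h>0$,
\[
 S(t+h)f - u(t+h) = \big(S(h)S(t)f - S(h)u(t)\big) + \big(S(h)u(t)-u(t)\big) - \big(u(t+h)-u(t)\big).
\]
For the first bracket, monotonicity and the Lipschitz-type estimate of Lemma~\ref{lem:uniform}, applied on a suitable compact set, give $\big(S(h)S(t)f-S(h)u(t)\big)^+\lesssim$ a term controlled by $\phi(t)$ up to $O(h)$ and an $\epsilon$-error from the compact truncation (using Remark~\ref{rem:S} to absorb the tails). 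The remaining two brackets combine to $\big(S(h)u(t)-u(t)\big)-\big(u(t+h)-u(t)\big)$; dividing by $h$ and using the definition of $\AG^+ u(t)$ as a $\Gamma$-limsup together with~\eqref{eq:comp2} shows that the $\Gamma$-limsup as $h\downarrow 0$ of $\frac{1}{h}\big[(S(h)u(t)-u(t))-(u(t+h)-u(t))\big]$ is $\le 0$, while~\eqref{eq:comp} keeps the negative part of $\frac{u(t+h)-u(t)}{h}$ bounded so that no mass escapes. Assembling these via Remark~\ref{rem:fe}'s characterization of $\Glimsup$ (comparison with the $\fe$ family on compacts) yields $\phi(t+h)\le \phi(t)+Lh\phi(t)+o(h)+C\epsilon$, and then $\epsilon\downarrow 0$ removes the last term.

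The main obstacle I anticipate is the interchange of the $\Gamma$-limsup in time (hypothesis~\eqref{eq:comp2}, which is a statement about $\AG^+ u(t)$ versus difference quotients of $u$) with the application of $S(h)$ and with passing to the weighted sup-norm $\phi$. Unlike the linear or norm-continuous setting, one cannot simply estimate $\|S(h)v-S(h)w\|_\kappa\le \|v-w\|_\kappa$ globally because $S(h)$ maps into $\Uk$ and the estimates of Lemma~\ref{lem:uniform} are only local (on compacts, with an $\epsilon$); so the argument must be run compact-by-compact with careful tail control via Remark~\ref{rem:S}, and the $\Gamma$-upper-semicontinuity of $s\mapsto S(s)(\cdot)$ from Lemma~\ref{lem:usc} is exactly what makes the time-regularity of $\phi$ work. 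A secondary technical point is that $\AG^+ u(t)$ may take the value $-\infty$ and is only upper semicontinuous, so $S(h)\AG^+u(t)$ must be interpreted through the extension to $\Uk$ and handled with continuity from above~\ref{S2}; one should also be slightly careful that~\eqref{eq:comp} is used to ensure the relevant difference quotients do not blow up downward, which is needed for the $\Gamma$-limsup manipulations to be legitimate.
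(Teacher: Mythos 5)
Your Gronwall strategy has a genuine gap at its core step. To turn the decomposition $S(t+h)f-u(t+h)=\big(S(h)S(t)f-S(h)u(t)\big)+\big(S(h)u(t)-u(t)\big)-\big(u(t+h)-u(t)\big)$ into the differential inequality $\phi(t+h)\leq\phi(t)+Lh\,\phi(t)+o(h)+C\epsilon$, you need the first bracket to be controlled by $\phi(t)$ with a constant of the form $1+O(h)$ (or $e^{Lh}$). Assumption~\ref{ass:S} provides no such estimate: Lemma~\ref{lem:lip} and Lemma~\ref{lem:uniform} only give $\|S(h)v-S(h)w\|_\kappa\leq c\,\|v-w\|_\kappa$, resp.\ $\|S(h)v-S(h)w\|_{\infty,K}\leq c\,\|v-w\|_{\infty,K'}+\epsilon$, with a constant $c$ depending on the radius and the time horizon which need not tend to $1$ as $h\downarrow 0$ (contrast with Assumption~\ref{ass:I}(iv) in the Chernoff section, where an $e^{t\omega_r}$-bound is explicitly assumed). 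With a fixed $c>1$ the inequality $\phi(t+h)\leq c\,\phi(t)+o(h)$ only bounds the Dini derivative at points where $\phi(t)=0$ and gives $+\infty$ elsewhere, so no Gronwall/Osgood lemma applies and $\phi\equiv 0$ does not follow. A secondary gap: your claim that the $\Gamma$-limsup of $\frac{1}{h}\big[(S(h)u(t)-u(t))-(u(t+h)-u(t))\big]$ is $\leq 0$ does not follow from the definition of $\AG^+u(t)$ together with \eqref{eq:comp2}, because $\Glimsup$ is only subadditive (Lemma~\ref{lem:gamma}(iv)) and $\AG^+u(t)$ is merely upper semicontinuous and possibly $-\infty$-valued, so you cannot pull it in and out of the limsup; this is exactly why the paper introduces the truncations $f_h,g_h$ at level $-c/\kappa$ and needs the interchange result Theorem~\ref{thm:limsup}.

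The paper's proof avoids any contraction estimate altogether: after reducing to $T_1=0$, it fixes $t$ and studies the interpolation $v(s):=S(t-s)u(s)$, proving $\liminf_{h\downarrow 0}\frac{v(s+h)-v(s)}{h}\geq 0$ pointwise in $x$ by the convexity trick of Lemma~\ref{lem:lambda} (with a small factor $\lambda$ supplied by \ref{S4}), Theorem~\ref{thm:limsup} to replace the difference quotients by $\AG^+u(s)$ inside $S(t-s-h)$, and Lemma~\ref{lem:usc}; it then concludes $v(0)\leq v(t)$, i.e.\ $S(t)f\leq u(t)$, by a Pazy-type monotonicity argument for upper semicontinuous functions using the perturbation $v(s,x)+\epsilon s$. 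If you want to salvage your approach you would have to either add an $e^{\omega h}$-Lipschitz hypothesis (changing the theorem) or switch to this pointwise interpolation argument.
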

\begin{proof}
 It is sufficient to prove the result in the case $T_1=0$ and $S(0)f=u(0)$ since the general 
 case follows immediately by considering $\tilde{u}\colon [0,T_2-T_1]\to\L^S_+,\; t\mapsto u(t+T_1)$.
 Indeed, suppose that the result holds for $T_1=0$ and $S(0)f=u(0)$.\ Since $\tilde{u}$  
 satisfies the conditions imposed on $u$, we obtain $S(t)g\leq\tilde{u}(t)$ for all 
 $t\in [0,T_2-T_1]$, where $g:=u(T_1)$. The semigroup property and the monotonicity of 
 $S(t-T_1)$ imply
 \[ S(t)f=S(t-T_1)S(T_1)f\leq S(t-T_1)g\leq\tilde{u}(t-T_1)=u(t) 
 	\quad\mbox{for all } t\in [T_1, T_2]. \]
 	
 Now, let $T_1=0$ and $S(0)f=u(0)$. For fixed $t\in [0,T]$, we show that
 \[ v\colon [0,t]\to\LSplus,\; s\mapsto S(t-s)u(s) \]
 satisfies $v(0)\leq v(s)$ for all $s\in [0,t]$. First, we show that the mapping
 \begin{equation} \label{eq:v}
  \liminf_{h\downarrow 0}\frac{v(s+h)-v(s)}{h}\geq 0 
  \quad\mbox{for all } s\in [0,t).
 \end{equation}
 Let $s\in [0,t)$. Due to $u(s)\in\LSplus$ and condition~\eqref{eq:comp}, 
 there exists $h_0>0$ with
 \begin{equation} \label{eq:c}
  c:=\sup_{h\in (0,h_0]}\max\left\{\Big\|\Big(\frac{S(h)u(s)-u(s)}{h}\Big)^+\Big\|_\kappa,
  \Big\|\Big(\frac{u(s+h)-u(s)}{h}\Big)^-\Big\|_\kappa\right\}<\infty. 
 \end{equation}
 For every $h\in (0,h_0]$, we define 
 \[ f_h:=\max\left\{\frac{S(h)u(s)-u(s)}{h},-\frac{c}{\kappa}\right\}
 	\quad\mbox{and}\quad 
 	g_h:=\max\left\{-\frac{u(s+h)-u(s)}{h},-\frac{c}{\kappa}\right\}. \]
 It follows from Lemma~\ref{lem:gamma}(vi) that
 \[ f:=\Glimsup_{h\downarrow 0} f_h=\max\left\{\AG ^+ u(s),-\frac{c}{\kappa}\right\}. \]
 Moreover, inequality~\eqref{eq:comp2}, equation~\eqref{eq:c} and Lemma~\ref{lem:gamma}(vi) 
 yield
 \begin{equation} \label{eq:0}
  \Glimsup_{h\downarrow 0}\, (f+g_h)\leq 0.
 \end{equation}  
 Let $\epsilon>0$. Since $u$ is bounded and by condition~\ref{S3}, there exists 
 $\lambda\in (0,1]$ with
 \begin{equation} \label{eq:epsilon}
  \sup_{a,b\in [0,t]}\lambda\|S(a)u(b)\|_\kappa<\epsilon. 
 \end{equation}
 Lemma~\ref{lem:lambda} and inequality~\eqref{eq:epsilon} imply
 \begin{align*}
  &-\liminf_{h\downarrow 0}\frac{v(s+h)-v(s)}{h}
  =\limsup_{h\downarrow 0}\frac{v(s)-v(s+h)}{h} \\
  &=\limsup_{h\downarrow 0}\frac{S(t-s-h)S(h)u(s)-S(t-s-h)u(s+h)}{h} \\
  &\leq\limsup_{h\downarrow 0}\lambda\left(S(t-s-h)\left(\frac{S(h)u(s)-u(s+h)}{\lambda h}
  	+u(s+h)\right)-S(t-s-h)u(s+h)\right) \\
  &\leq\limsup_{h\downarrow 0}\lambda S(t-s-h)\left(\frac{f_h+g_h}{\lambda}+u(s+h)\right)
  	+\frac{\epsilon}{\kappa}. 
 \end{align*}
 Furthermore, the boundedness of $(g_h)_{h\in (0,h_0]}$, $(f_h)_{h\in (0,h_0]}$ and $u$, 
 the conditions~\ref{S1} and~\ref{S3} and Lemma~\ref{lem:uniform} guarantee that we can
 apply Theorem~\ref{thm:limsup} to obtain
 \[ \limsup_{h\downarrow 0} \lambda S(t-s-h)\left(\frac{f_h+g_h}{\lambda}+u(s+h)\right)
 	\leq\limsup_{h\downarrow 0} \lambda S(t-s-h)\bigg(\frac{f+g_h}{\lambda}+u(s+h)\bigg). \]
 Lemma~\ref{lem:usc}, inequality~\eqref{eq:0}, Lemma~\ref{lem:gamma}(iv), the inequality
 $\Glimsup_{h\downarrow 0}u(s+h)\leq u(s)$ and inequality~\eqref{eq:epsilon} yield
 \[ \limsup_{h\downarrow 0} \lambda S(t-s-h)\left(\frac{f+g_h}{\lambda}+u(s+h)\right)
 	\leq\lambda S(t-s)u(s)\leq\frac{\epsilon}{\kappa}. \]
 We combine the previous estimates and let $\epsilon\downarrow 0$ to obtain inequality~\eqref{eq:v}. 
 
 Second, we adapt the proof of~\cite[Lemma~1.1 in Chapter~2]{Pazy83} to show that
 $v(0)\leq v(s)$ for all $s\in [0,t]$. Let $x\in X$ and $\epsilon >0$. Define
 $v(s,x):=(v(s))(x)$ and
 \[ v_\epsilon(\cdot,x)\colon [0,t]\to\R,\; s\mapsto v(s,x)+\epsilon s.\]
 Let $s_0:=\sup\{s\in [0,t]\colon v_\epsilon(0,x)\leq v_\epsilon(s,x)\}$.\ It follows from 
 $\Glimsup_{r\to s} u(r)\leq u(s)$ and Lemma~\ref{lem:usc} that $v_\epsilon(\cdot,x)$ is upper 
 semicontinuous and thus $v_\epsilon(0,x)\leq v_\epsilon(s_0,x)$.\ By contradiction, we assume 
 that $s_0<t$.\ Let $(s_n)_{n\in\N}\subset (s_0,t]$ be a sequence with $s_n\downarrow s_0$. 
 It follows from $v_\epsilon(s_n,x)<v_\epsilon(0,x)\leq v_\epsilon(s_0,x)$ and inequality~\eqref{eq:v} 
 that  
 \[ 0\geq\limsup_{n\to\infty}\frac{v_\epsilon(s_n,x)-v_\epsilon(s_0,x)}{s_n-s_0}
 	=\limsup_{n\to\infty}\frac{v(s_n,x)-v(s_0,x)}{s_n-s_0}+\epsilon\geq\epsilon>0. \]
 This implies $v_\epsilon(0,x)\leq v_\epsilon(t,x)$ and therefore $v(0,x)\leq v(t,x)$ as 
 $\epsilon\downarrow 0$. In particular, we obtain $S(t)f=v(0)\leq v(t)=u(t)$.
\end{proof}

Inspecting the proof of the previous theorem, it seems natural to replace the 
conditions~\eqref{eq:comp} and~\eqref{eq:comp2} by the assumption that
\[ \limsup_{h\downarrow 0}\Big\|\Big(\frac{S(h)u(t)-u(t+h)}{h}\Big)^+\Big\|_\kappa<\infty
	\quad\mbox{and}\quad 
	\Glimsup_{h\downarrow 0}\frac{S(h)u(t)-u(t+h)}{h}\leq 0. \]
Indeed, the previous theorem remains valid and the proof simplifies. In particular, 
we do not need Theorem~\ref{thm:limsup}.\ However, in applications, this assumption 
is not verifiable. Lemma~\ref{lem:gamma}(iv) implies that condition~\eqref{eq:comp2} 
is satisfied if 
\[ \AG^+ u(t)\le{\Gliminf}_{h\downarrow 0}\frac{u(t+h)-u(t)}{h}
	:=-\Big(\Glimsup_{h\downarrow 0}-\frac{u(t+h)-u(t)}{h}\Big).\]
Furthermore, it follows from Lemma~\ref{lem:fe}(ii) that condition~\eqref{eq:comp2} 
is satisfied if
\[ \lim_{h\downarrow 0}\Big(\AG^+ u(t)-\frac{u(t+h)-u(t)}{h}\Big)^+=0 \]
w.r.t.\ the mixed topology.

\begin{theorem} \label{thm:comp2} 
 Let $(S(t))_{t\geq 0}$ and $(T(t))_{t\geq 0}$ be two convex monotone semigroups on $\Ck$ 
 and $\D\subset\LT\cap\LSplus$.\ We denote by $\AG^+$ the upper $\Gamma$-generator of 
 $(S(t))_{t\geq 0}$ and by $B$ and $\BG^+$ the generator and the upper $\Gamma$-generator
 of $(T(t))_{t\geq 0}$, respectively.\ Assume that $T(t)\colon\D\to\D$ for all $t\geq 0$ and
 \begin{equation} \label{eq:AB}
  \AG^+ f\leq B_\Gamma^+ f \quad\mbox{for all } f\in\D.
 \end{equation}
 Then, it holds $S(t)f\leq T(t)f$ for all $t\geq 0$ and $f\in\D\cap D(B)$. 
\end{theorem}	
\begin{proof}
 Let $f\in\D\cap D(B)$ and $u(t):=T(t)f$ for all $t\geq 0$. The conditions~\ref{S3} and~\ref{S4} 
 and the invariance of $\D\subset\LSplus$ imply that $u\colon[0,\infty)\to\LSplus$
 is a well-defined mapping satisfying $u(0)=f$, $\sup_{s\in [0,t]}\|u(s)\|_\kappa<\infty$
 and $\Glimsup_{s\to t} u(s)\leq u(t)$ for all $t\geq 0$. Condition~\eqref{eq:comp} follows 
 from the invariance of $\D\subset\LT$.\ It remains to verify condition~\eqref{eq:comp2}. 
 For every $t\geq 0$ and $h>0$, we use $u(t)\in\D$ and inequality~\eqref{eq:AB} to estimate
 \[ \AG^+ u(t)-\frac{u(t+h)-u(t)}{h}\leq\BG^+ u(t)-\frac{u(t+h)-u(t)}{h}. \]
 Let $t\geq 0$ and $(h_n)_{n\in\N}\subset (0,1]$ be a sequence with $h_n\downarrow 0$. 
 For every $n\in\N$, 
 \begin{equation} \label{eq:BG}
 \BG^+ u(t)-\frac{u(t+h_n)-u(t)}{h_n}
 =\BG^+ u(t)-g_n+g_n-\frac{u(t+h_n)-u(t)}{h_n},
 \end{equation}
 where $g_n:=\frac{1}{h_n}\big(T(t)(f+h_n Bf)-T(t)f\big)$. It follows from Lemma~\ref{lem:lambda} 
 that
 \begin{align*}
  &T(t)(f+h_n Bf)-T(t)\left(\frac{T(h_n)f-f}{h_n}-Bf+f+h_n Bf\right) \\
  &\leq g_n-\frac{u(t+h_n)-u(t)}{h_n} =\frac{T(t)(f+h_n Bf)-T(t)T(h_n)f}{h_n} \\
  &\leq T(t)\left(-\left(\frac{T(h_n)f-f}{h_n}-Bf\right)+T(h_n)f\right)-T(t)T(h_n)f.
 \end{align*}
 Combining the previous estimate with Lemma~\ref{lem:uniform} yields
 \[ g_n-\frac{u(t+h_n)-u(t)}{h_n}\to 0. \] 
 Furthermore, since $T(t)$ is convex, the sequence $(g_n)_{n\in\N}$ is non-increasing. 
 Hence, there exists a function $g\in\Uk$ with $g_n\downarrow g$ and Lemma~\ref{lem:gamma}(iii) 
 and~(v) imply that $g=\BG^+ u(t)$. It follows from inequality~\eqref{eq:BG}, the inequality
 $\BG^+ u(t)-g_n\leq 0$ and Lemma~\ref{lem:gamma}(iv) that condition~\eqref{eq:comp2} is 
 satisfied.\ Theorem~\ref{thm:comp} yields $S(t)f\leq T(t)f$ for all $t\geq 0$. 
\end{proof}

A function $u\colon [0,T]\to \LSplus$ satisfying the conditions from Theorem~\ref{thm:comp}
can be seen as a $\Gamma$-supersolution of the equation
\begin{equation} \label{eq:cauchy}
 \partial_t u(t) = A_\Gamma^+ u(t) \quad\mbox{for all } t\in [0,T], \quad u(0)=f.
\end{equation}
Let $f\in D(A)$ and $v(t):=S(t)f$ for all $t\in[0,T]$. Similar to the proof of Theorem~\ref{thm:comp2}, 
one can show that $v$ a $\Gamma$-supersolution of equation~\eqref{eq:cauchy}.\ Furthermore,
Theorem~\ref{thm:comp} guarantees that $v$ is the smallest $\Gamma$-supersolution.

\begin{remark}
 In view of Theorem~\ref{thm:comp} and Theorem~\ref{thm:comp2}, it becomes apparent that
 our solution concept is based on a weak form of differentiability of the solution 
 while viscosity solutions are defined by comparison with smooth functions.\ In general,
 the domain of the generator containing all functions such that the trajectories $t\mapsto S(t)f$
 are differentiable might not be invariant under the semigroup, see, e.g.,~\cite[Section~4]{CL71} 
 and~\cite[Example~5.2]{DKN21+}. Fortunately, the Lipschitz set $\LS$ containing all functions 
 such the trajectories $t\mapsto S(t)f$ are Lipschitz continuous is invariant.\ Since the 
 existence of the $\Gamma$-limit can a priori not be guaranteed, we define the upper
 $\Gamma$-generator as a limit superior which is naturally defined on the upper Lipschitz 
 set $\LSplus$. The latter is also invariant and, under additional conditions, the upper 
 $\Gamma$-generator coincides with the $\Gamma$-generator, see Section~\ref{sec:approx}.
 Furthermore, the proof of Theorem~\ref{thm:comp} crucially relies on the close relation between 
 upper semicontinuity w.r.t.\ $\Gamma$-convergence, continuity from above and continuity w.r.t. 
 the mixed topology.\ Previously to the introduction of the
 $\Gamma$-generator in this article, the notion of a monotone generator has been studied 
 in~\cite{DKN21+}. Furthermore, in spaces with order continuous norm such as $L^p$-spaces 
 and Orlicz hearts, the domain of the norm generator is invariant and the semigroup defines 
 the unique classical solution of the abstract Cauchy problem, see~\cite{BK22,DKN21}. 
 However, these two approaches are rather limited in applications.\ Finally, in order to
 apply the classical theory of nonlinear semigroup theory based on m-accretive or maximal 
 monotone operators, see~\cite{Barbu10,BC91,Brezis71,CL71,Kato67}, one typically has to 
 solve a fully nonlinear elliptic PDE. The lack of classical solutions for this type of
 equations has been one of the motivations for the introduction of viscosity solutions,
 see, e.g., the discussion in~\cite{Evans87} and~\cite[Chapter~6]{FK06}.\ Since our approach
 does not involve the resolvent of the nonlinear generator, we do not have to address these
 issues. For that reason, we also use Chernoff-type approximations to construct semigroups, 
 see Section~\ref{sec:cher}.\ Finally, we want to emphasize that our approach is self-contained 
 and does, in particular, not rely on the theory of viscosity solutions. 
\end{remark}

\section{Approximation of the $\Gamma$-generator}
\label{sec:approx}

Throughout this section, let $(S(t))_{t\geq 0}$ be a convex monotone semigroup on $\Ck$.
In many applications, it is possible to compute the generator for smooth functions whereas 
the upper $\Gamma$-generator cannot be determined explicitly. Moreover, the semigroup does 
not map smooth functions to smooth functions, see, e.g.,~\cite[Example~5.2]{DKN21+}. Hence, 
in order to apply Theorem~\ref{thm:comp2}, we must show that the upper $\Gamma$-generator 
can be approximated by means of smooth functions. In contrast to the theory of strongly 
continuous linear semigroups, where the norm generator is always a closed operator, we do 
not claim the same to be valid for the upper $\Gamma$-generator.\ Indeed, while the upper 
bound $\AG^+ f\leq\Glimsup_{n\to\infty}\AG^+ f_n$ is satisfied for any approximating sequence such that $(\AG^+ f_n)_{n\in\N}$ is bounded above, 
we prove the equality $\AG^+ f=\Glim_{n\to\infty}\AG^+ f_n$ only under additional conditions 
on the semigroup and for particular choices of the sequence $(f_n)_{n\in\N}$. In the sequel, 
we denote by $(S(t))_{t\geq 0}$ the extended convex monotone semigroup on $\Uk$ from
Theorem~\ref{thm:ext}. For every $t\geq 0$, $f\in\Uk$ and $x\in X$, we define
\[ \left(\int_0^t S(s)f\,\d s\right)(x):=\int_0^t \big(S(s)f\big)(x)\,\d s. \]

\subsection{General approximation results}

The proof of the upper bound is based on the following auxiliary estimate.

\begin{lemma} \label{lem:ub}
 For every $r,T\geq 0$ and $\epsilon>0$, there exists $\lambda_0\in (0,1]$
 with
 \[ S(t)f-f\leq\lambda\int_0^t  S(s)\Big(\frac{1}{\lambda}\AG^+ f+f\Big)\d s
 	+\frac{\epsilon t}{\kappa} \]
 for all $t\in [0,T]$, $f\in B_{\Ck}(r)\cap\LSplus$ and $\lambda\in (0,\lambda_0]$.
\end{lemma}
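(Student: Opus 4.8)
The plan is to compare $S(t)f$ with the constant-in-time curve built from the infinitesimal increment $\AG^+f$, and to pass from the $\Gamma$-limit superior defining $\AG^+f$ to a uniform (in a compact set) upper bound via Remark~\ref{rem:fe}. First I would fix $r,T\ge0$ and $\epsilon>0$, and use condition~\ref{S4} (applied on a slightly larger radius than $r$, to absorb the Lipschitz increments, which are controlled by the constant $c$ in Definition~\ref{def:Lset}(iii) — note, however, that $c$ depends on $f$, so this uniformity must be handled carefully, see below) together with continuity from above to choose $\lambda_0\in(0,1]$ small enough that $\lambda\|S(s)g\|_\kappa<\epsilon$ for all $s\in[0,T]$, $\lambda\le\lambda_0$, and all $g$ in the relevant bounded set. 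The core identity is the telescoping/fundamental-theorem-of-calculus representation: writing $S(t)f-f$ as an integral of $\frac{\d}{\d s}S(s)f$ in a weak sense, or more robustly discretizing, $S(t)f-f=\sum_{k}\big(S(t_{k+1})f-S(t_k)f\big)$ and using the semigroup property on $\L^S_+$ to rewrite each increment as $S(t_k)\big(S(h)f-f\big)$-type terms, then bounding $S(h)f-f\le h\,\AG^+f$ up to an $\epsilon$-error on compacts via Remark~\ref{rem:fe}.

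The key steps in order: (1) reduce to controlling $S(s)(S(h)f-f)$; by monotonicity and the Lipschitz bound $\|(S(h)f-f)^+\|_\kappa\le ch$, together with the convexity trick already used in Lemma~\ref{lem:usc} (split $\frac1\lambda(\fe) + \frac{c_\epsilon}{\kappa}\one_{K^c}$ using $\lambda S(s)(\tfrac1\lambda\cdot)+(1-\lambda)S(s)(\cdots)$), bound $S(h)f-f$ above by $h\lambda^{-1}\cdot\lambda S(\cdot)(\frac1\lambda\AG^+f+f)$-shaped terms plus an $\epsilon h/\kappa$ error. (2) Use Remark~\ref{rem:fe} to convert the $\Gamma$-limsup inequality $\Glimsup_{h\downarrow0}\frac{S(h)f-f}{h}=\AG^+f$ into: for every $\eta>0$ and $K\Subset X$ there is $h_0$ with $\frac{S(h)f-f}{h}(x)\le(\overline{\AG^+f})^\eta(x)$ for $x\in K$, $h\le h_0$. (3) Feed this into the integral/Riemann-sum representation of $S(t)f-f$, apply monotonicity and convexity of $S(s)$ to pull the $\lambda$ and the $\frac1\lambda$ through, use continuity from above of $S(s)$ to send $\eta\downarrow0$ (so $(\overline{\AG^+f})^\eta\downarrow\AG^+f$), and use Remark~\ref{rem:S}(ii) and the choice of $\lambda_0$ to discard the tail term $S(s)(\frac{c_\epsilon}{\kappa}\one_{K^c})$ and the $(1-\lambda)$-weighted terms at cost $\le\epsilon t/\kappa$. (4) Recognize the resulting Riemann sums as converging to $\lambda\int_0^t S(s)(\frac1\lambda\AG^+f+f)\,\d s$; here the pointwise upper semicontinuity in $(s,x)$ from Remark~\ref{rem:S}(ii) and uniform bounds justify passing the limit inside the integral.

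The main obstacle I expect is the uniformity of $\lambda_0$ over all $f\in B_{\Ck}(r)\cap\L^S_+$: the Lipschitz constant $c$ and threshold $h_0$ in Definition~\ref{def:Lset}(iii) are a priori $f$-dependent, yet the statement demands a single $\lambda_0$ depending only on $r,T,\epsilon$. The way around this is that the error terms we actually need to make small are of the form $\lambda\|S(s)g\|_\kappa$ with $g$ ranging over a $\|\cdot\|_\kappa$-bounded set whose bound depends only on $r$ (since $\|S(h)f-f\|_\kappa\le\|S(h)f\|_\kappa+\|f\|_\kappa$ is bounded by condition~\ref{S4} uniformly for $f\in B_{\Ck}(r)$, independently of the Lipschitz constant $c$), so $\lambda_0$ can indeed be chosen using only~\ref{S4} and continuity from above, uniformly in $f$; the $f$-dependent constant $c$ then only enters through which $h$ we are allowed to take, and disappears in the limit $h\downarrow0$ defining the integral. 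A second, more technical point is making the discretize–then–pass-to-the-limit argument rigorous simultaneously with the $\Gamma$-limsup-to-uniform conversion, but this is exactly the pattern already executed in the proof of Lemma~\ref{lem:usc} and can be quoted in spirit.
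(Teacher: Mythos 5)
Your proposal is correct and follows essentially the paper's own argument: telescoping over a dyadic partition via the semigroup property on $\L^S_+$, the rescaling trick of Lemma~\ref{lem:lambda} with the $\epsilon t/\kappa$ error controlled by choosing $\lambda_0$ through condition~\ref{S4} on $B_{\Ck}(r)$ (so, exactly as you argue, the $f$-dependent Lipschitz constant never enters the final bound and uniformity in $f$ holds), followed by reverse Fatou and the $\Gamma$-upper semicontinuity of $(s,g)\mapsto S(s)g$. The only cosmetic difference is that the paper keeps the difference quotients $\frac{S(h_n)f-f}{\lambda h_n}+f$ inside $S(t_n^{k-1})$ and invokes Lemma~\ref{lem:usc} once after Fatou, rather than unfolding the Remark~\ref{rem:fe}/tail-term machinery inside the Riemann sum as you sketch.
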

\begin{proof}
 Let $r,T\geq 0$ and $\epsilon>0$. By condition~\ref{S3}, there exists $\lambda_0\in (0,1]$ with
 \begin{equation} \label{eq:ub}
  \sup_{t\in [0,T]}\sup_{f\in B_{\Ck}(r)}\lambda_0\|S(t)f\|_\kappa\leq\epsilon
  \quad\mbox{and}\quad \lambda_0 T\leq 1.
 \end{equation}
 In the sequel, we fix $t\in [0,T]$, $f\in B_{\Ck}(r)\cap\LSplus$ and 
 $\lambda\in (0,\lambda_0]$. Define $h_n:=2^{-n}t$ and $t_n^k:=k2^{-n}t$ 
 for all $k,n\in\N_0$. For every $n\in\N$, it follows from the semigroup property, 
 inequality~\eqref{eq:ub} and Lemma~\ref{lem:lambda} that
 \begin{align*}
  S(t)f-f
  &=\sum_{k=1}^{2^n} \big(S(t_n^k)f-S(t_n^{k-1})f\big)
  =\sum_{k=1}^{2^n} \big(S(t_n^{k-1})S(h_n)f-S(t_n^{k-1})f\big) \\
  &\leq\lambda h_n\sum_{k=1}^{2^n} \left(S(t_n^{k-1})\left(\frac{S(h_n)f-f}{\lambda h_n}+f\right)
  	-S(t_n^{k-1})f\right) \\
  &\leq\lambda h_n\sum_{k=1}^{2^n} S(t_n^{k-1})\left(\frac{S(h_n)f-f}{\lambda h_n}+f\right)
  	+\frac{\epsilon t}{\kappa} \\
  &=\lambda\int_0^t \sum_{k=1}^{2^n} S(t_n^{k-1})\left(\frac{S(h_n)f-f}{\lambda h_n}+f\right)
  	\one_{[t_n^{k-1},t_n^k)}(s)\,\d s+\frac{\epsilon t}{\kappa}.
 \end{align*}
 Due to $f\in\LSplus$ and condition~\ref{S3}, the sequence inside the integral is bounded 
 above. Hence, we can apply Fatou's lemma and Lemma~\ref{lem:usc} to obtain
 \begin{align*}
  S(t)f-f
  &\leq\lambda\int_0^t \limsup_{n\to\infty}\sum_{k=1}^n S(t_n^{k-1})
  	\left(\frac{S(h_n)f-f}{\lambda h_n}+f\right)
  	\one_{[t_n^{k-1},t_n^k)}(s)\,\d s+\frac{\epsilon t}{\kappa} \\
  &\leq\lambda\int_0^t S(s)\Big(\frac{1}{\lambda}\AG^+ f+f\Big)\d s
  	+\frac{\epsilon t}{\kappa}.\qedhere
 \end{align*}
\end{proof}

\begin{theorem} \label{thm:ub}
 Let $f\in\Ck$ and $(f_n)_{n\in\N}\subset\LSplus$ be a sequence with $f_n\to f$ 
 such that $(\AG^+ f_n)_{n\in\N}$ is bounded above. Then, it holds $f\in\LSplus$
 and $\AG^+ f\leq\Glimsup_{n\to\infty}\AG^+ f_n$. 
\end{theorem}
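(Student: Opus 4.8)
The plan is to first establish the Lipschitz-type estimate needed to conclude $f\in\L^S_+$, and then to deduce the inequality for the $\Gamma$-generators via Lemma~\ref{lem:ub}. For the first part, I would fix $r\geq 0$ with $f_n\in B_{\Ck}(r)$ for all $n$ (possible since $f_n\to f$ in the mixed topology, so the sequence is $\|\cdot\|_\kappa$-bounded, and $f\in B_{\Ck}(r)$ after enlarging $r$) and let $c\geq 0$ bound $\|(\AG^+ f_n)^+\|_\kappa$ uniformly in $n$. Applying Lemma~\ref{lem:ub} with parameters $r$ and, say, $T=1$ and $\epsilon=1$, there is $\lambda_0\in(0,1]$ with
\[
 S(t)f_n-f_n\leq\lambda_0\int_0^t S(s)\Big(\tfrac{1}{\lambda_0}\AG^+ f_n+f_n\Big)\d s+\tfrac{t}{\kappa}
\]
for all $t\in[0,1]$ and all $n$. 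Bounding $\tfrac{1}{\lambda_0}\AG^+ f_n+f_n$ from above by $\tfrac{1}{\lambda_0}\tfrac{c}{\kappa}+\tfrac{r}{\kappa}=:\tfrac{c'}{\kappa}$ pointwise, using monotonicity of $S(s)$ and condition~\ref{S4} (more precisely Remark~\ref{rem:S}(i) applied to the extension, or directly the uniform bound $\sup_{s\in[0,1]}\|S(s)(\tfrac{c'}{\kappa})\|_\kappa=:M<\infty$), one obtains $(S(t)f_n-f_n)^+\leq(\lambda_0 M+1)\tfrac{t}{\kappa}$, i.e.\ $\|(S(t)f_n-f_n)^+\|_\kappa\leq(\lambda_0 M+1)t$ uniformly in $n$ and $t\in[0,1]$.

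Next I would pass to the limit $n\to\infty$. Fix $t\in[0,1]$. Since $f_n\to f$ in the mixed topology, Lemma~\ref{lem:uniform} gives $S(t)f_n\to S(t)f$ in $\Ck$, in particular $S(t)f_n(x)\to S(t)f(x)$ and $f_n(x)\to f(x)$ pointwise; hence $\|(S(t)f-f)^+\|_\kappa\leq(\lambda_0 M+1)t$ as well. This is exactly property~(iii) in Definition~\ref{def:Lset} with $h_0=1$ and $c=\lambda_0 M+1$. Properties~(i) and~(ii) — namely $S(t)f\in\Ck$ and the semigroup identity $S(s+t)f=S(s)S(t)f$ — follow from $f\in\Ck$, condition~\ref{S4}, and the continuity statement in Lemma~\ref{lem:uniform}: writing $S(s+t)f=\lim_n S(s+t)f_n=\lim_n S(s)S(t)f_n=S(s)\lim_n S(t)f_n=S(s)S(t)f$, using $S(t)f_n\to S(t)f$ and sequential continuity of $S(s)$. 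Thus $f\in\L^S_+$.

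For the second part, I would combine Lemma~\ref{lem:ub} applied to $f$ with the definition of $\AG^+ f$ as a $\Gamma$-limit superior. From Lemma~\ref{lem:ub}, for each $\epsilon>0$ there is $\lambda_0$ with $\tfrac{S(h)f-f}{h}\leq\tfrac{\lambda}{h}\int_0^h S(s)(\tfrac{1}{\lambda}\AG^+ f+f)\,\d s+\tfrac{\epsilon}{\kappa}$ for $h\in(0,T]$; letting $h\downarrow 0$ and using $\Gamma$-upper semicontinuity of the integral term at $s=0$ (this is where Lemma~\ref{lem:usc} and continuity from above enter, essentially as in the proof of Lemma~\ref{lem:ub} itself) gives $\AG^+ f\leq S(0)(\tfrac{1}{\lambda}\AG^+ f+f)\cdot\lambda+\tfrac{\epsilon}{\kappa}$, which is consistent but not yet the claim. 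The cleaner route, and the one I would actually pursue, is to bound $\tfrac{S(h)f_n-f_n}{h}$ using Lemma~\ref{lem:ub} applied to each $f_n$, then take $\Glimsup$ first in $h\downarrow 0$ and then control the dependence on $n$ via $\Glimsup_{n\to\infty}\AG^+ f_n$; alternatively — and I suspect this is the intended argument — one invokes Theorem~\ref{thm:comp} or Theorem~\ref{thm:comp2} with $T(t)$ built from the approximating data, so that $S(t)f\leq$ (the semigroup generated by $\Glimsup_n\AG^+ f_n$) and then differentiates. \textbf{The main obstacle} is precisely the interchange of the $\Gamma$-limit in $h$ with the limit in $n$: the inequality $\AG^+ f\leq\Glimsup_{n\to\infty}\AG^+ f_n$ requires transferring the infinitesimal bound valid for each $f_n$ uniformly enough to survive both limits, and the non-metrizability of the mixed topology together with the one-sided nature of $\Gamma$-convergence means one must work pointwise along recovery sequences $x_n\to x$ and use the characterization in Remark~\ref{rem:fe} rather than any naive limit exchange.
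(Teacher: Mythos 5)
Your first part (the membership $f\in\L^S_+$) is fine and close in spirit to the paper: apply Lemma~\ref{lem:ub} to each $f_n$, bound the integrand above by a constant multiple of $\nicefrac{1}{\kappa}$ using the uniform bound on $(\AG^+ f_n)^+$ together with monotonicity and~\ref{S4}, and pass to the limit in $n$ via Lemma~\ref{lem:uniform}; the verification of properties~(i) and~(ii) of Definition~\ref{def:Lset} by $S(s+t)f=\lim_n S(s)S(t)f_n=S(s)S(t)f$ is also correct.

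The second part, however, has a genuine gap: you never prove the inequality $\AG^+ f\leq\Glimsup_{n\to\infty}\AG^+ f_n$, and you say so yourself by declaring the interchange of the limit in $n$ with the $\Gamma$-limit in $h$ to be ``the main obstacle'' without resolving it. Moreover, the two routes you sketch do not work as stated. Taking $\Glimsup_{h\downarrow 0}$ first in the Lemma~\ref{lem:ub} bound for each fixed $n$ only returns information about $\AG^+ f_n$ and gives no handle on $\AG^+ f$; and invoking Theorem~\ref{thm:comp} or~\ref{thm:comp2} with ``a semigroup built from the approximating data'' presupposes a semigroup generated by $\Glimsup_n\AG^+ f_n$, which is not available (indeed, constructing and comparing such objects is exactly what these results are for, not an input one can assume). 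The paper's argument takes the limits in the opposite order and this is the missing idea: for \emph{fixed} $h\in(0,1]$ and $\lambda\in(0,\lambda_0]$, one passes to the limit in $n$ in the Lemma~\ref{lem:ub} estimate — the left-hand side converges by Lemma~\ref{lem:uniform}, and on the right-hand side Fatou's lemma pulls $\limsup_n$ inside the time integral, after which Lemma~\ref{lem:gamma}(iv) (with $f_n\to f$) and Lemma~\ref{lem:usc} give $\limsup_n S(s)\big(\tfrac1\lambda\AG^+ f_n+f_n\big)\leq S(s)\big(\tfrac1\lambda g+f\big)$ with $g:=\Glimsup_n\AG^+ f_n$. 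This yields the $h$-uniform bound
\[
\frac{S(h)f-f}{h}\leq\frac{\lambda}{h}\int_0^h S(s)\Big(\tfrac1\lambda g+f\Big)\,\d s+\frac{\epsilon}{\kappa},
\]
and only then does one take $\Glimsup_{h\downarrow 0}$, using Lemma~\ref{lem:usc} at $t=0$ together with $S(0)=\id_{\Uk}$ and Lemma~\ref{lem:gamma}(iv) to obtain $\AG^+ f\leq g+\lambda f+\nicefrac{\epsilon}{\kappa}$, and finally lets $\epsilon,\lambda\downarrow 0$. Without this step your proposal establishes only the Lipschitz-set membership, not the asserted inequality.
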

\begin{proof}
 Choose $\epsilon>0$ and $\lambda_0\in (0,1]$ such that the statement of Lemma~\ref{lem:ub}
 is valid with $r:=\sup_{n\in\N}\|f_n\|_\kappa$ and $t_0:=1$.\ Then, for every $h\in (0,1]$ 
 and $\lambda\in (0,\lambda_0]$, it follows from Lemma~\ref{lem:uniform}, Fatou's lemma, 
 Lemma~\ref{lem:usc} and Lemma~\ref{lem:gamma}(iv) that
 \begin{align*}
  \frac{S(h)f-f}{h}
  &=\lim_{n\to\infty}\frac{S(h)f_n-f_n}{h} 
  \leq\limsup_{n\to\infty}\frac{1}{h}\int_0^h 
  	\lambda S(s)\Big(\frac{1}{\lambda}\AG^+ f_n+f_n\Big)\d s +\frac{\epsilon}{\kappa} \\
  &\leq\frac{1}{h}\int_0^h \limsup_{n\to\infty}
  	\lambda S(s)\Big(\frac{1}{\lambda}\AG^+ f_n+f_n\Big)\d s +\frac{\epsilon}{\kappa} 
  \leq\frac{1}{h}\int_0^h \lambda S(s)\Big(\frac{1}{\lambda}g+f\Big)\d s
  	+\frac{\epsilon}{\kappa},
 \end{align*}
 where $g:=\Glimsup_{n\to\infty}\AG^+ f_n$. Condition~\ref{S3} implies $f\in\LSplus$. 
 Moreover, it follows from Lemma~\ref{lem:gamma}(iv), Lemma~\ref{lem:usc} and 
 $S(0)=\id_{\Uk}$ that
 \[ \AG^+ f\leq\Glimsup_{h\downarrow 0}\frac{1}{h}\int_0^h 
 	\lambda S(s)\Big(\frac{1}{\lambda}g+f\Big)\d s +\frac{\epsilon}{\kappa}
  	\leq g+\lambda f+\frac{\epsilon}{\kappa} \quad\mbox{for all } \lambda\in (0,\lambda_0]. \]
 Letting $\epsilon, \lambda\downarrow 0$, we obtain $\AG^+ f\leq\Glimsup_{n\to\infty}\AG^+ f_n$.
\end{proof}

The previous result relies on the fact that the semigroup is upper semicontinuous w.r.t. 
$\Gamma$-convergence and that the $\Gamma$-limit superior exists for any sequence which
is bounded above. Since the semigroup is not continuous w.r.t. $\Gamma$-convergence,
the reverse estimate is only valid for particular choices of the approximating sequence. 
Condition~\eqref{eq:lb} incorporates the fact that the $\Gamma$-limit is not determined by
evaluating a sequence of functions at a fixed point but also depends on the values of
these functions in an arbitrarily small neighbourhood of that point. In Subsection~\ref{sec:conv},
we study the construction of such sequences by means of convolution and truncation.
These results are particular interesting for applications with $X:=\Rd$, where one can
typically verify that the domain of the generator contains smooth functions with bounded
derivatives or at least smooth function with compact support.

\begin{theorem} \label{thm:lb}
 Let  $f\in\Ck$ and $(f_n)_{n\in\N}\subset D(\AG)$ be a sequence with $f_n\to f$ 
 such that $(\AG f_n)_{n\in\N}$ is bounded above. Suppose that, for every $\epsilon>0$ 
 and $K\Subset X$, there exist $n_0\in\N$, $h_0>0$ and a sequence 
 $(r_n)_{n\in\N}\subset (0,\infty)$ with $r_n\to 0$ such that 
 \begin{equation} \label{eq:lb}
  \left(\frac{S(h)f_n-f_n}{h}\right)(x)
  \leq\sup_{y\in B(x,r_n)}\left(\frac{S(h)f-f}{h}\right)(y)+\frac{\epsilon}{\kappa(x)}
 \end{equation}
 for all $h\in (0,h_0]$, $n\geq n_0$ and $x\in K$. Then, $f\in D(\AG)$ and
 $\AG f=\Glim_{n\to\infty}\AG f_n$. 
\end{theorem}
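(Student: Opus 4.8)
The plan is to combine Theorem~\ref{thm:ub} (which already yields $f\in\L^S_+$ and the upper bound $\AG^+ f\leq\Glimsup_{n\to\infty}\AG^+ f_n=\Glimsup_{n\to\infty}\AG f_n$) with a matching lower bound that uses the extra hypothesis~\eqref{eq:lb}. Since $f_n\in D(\AG)$, we know that the difference quotients $\frac{S(h)f_n-f_n}{h}$ actually $\Gamma$-converge as $h\downarrow 0$ to $\AG f_n\in\Uk$, not merely $\Gamma$-$\limsup$. Writing $g:=\Glimsup_{n\to\infty}\AG f_n$, Theorem~\ref{thm:ub} gives $\AG^+ f\leq g$ and $g\in\Uk$ is bounded above. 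To conclude $f\in D(\AG)$ with $\AG f=\Glim_{n\to\infty}\AG f_n$, it remains to show the reverse inequality $\Gliminf_{h\downarrow 0}\frac{S(h)f-f}{h}\geq g$ in the appropriate sense, i.e.\ that for every $x\in X$ there is a sequence $h_m\downarrow 0$ (or rather: along every sequence $h_m\downarrow0$ one can extract) realizing a value $\geq g(x)$; more precisely one shows $\Glimsup_{h\downarrow0}\big(g-\frac{S(h)f-f}{h}\big)\le 0$, which together with the upper bound and the characterization of $\Glim$ via Lemma~\ref{lem:gamma} forces $\AG f=\Glim_{n\to\infty}\AG f_n$.

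First I would fix $x\in X$, $\epsilon>0$, and choose $K\Subset X$ a compact neighbourhood of $x$; apply the hypothesis to obtain $n_0$, $h_0$, and $r_n\to 0$ so that~\eqref{eq:lb} holds. Fixing any $n\geq n_0$, I let $h\downarrow 0$ in~\eqref{eq:lb}. The left-hand side, being a difference quotient of the semigroup applied to $f_n\in D(\AG)$, has $\Gamma$-limit $\AG f_n$; in particular there is a sequence $h_k\downarrow 0$ with $\big(\frac{S(h_k)f_n-f_n}{h_k}\big)(x)\to (\AG f_n)(x)$. Along this sequence the right-hand side is $\sup_{y\in B(x,r_n)}\big(\frac{S(h_k)f-f}{h_k}\big)(y)+\frac{\epsilon}{\kappa(x)}$; using the definition of $\Glimsup$ together with the fact that any point in $B(x,r_n)$ can be reached by a sequence converging to $x$ as the radius is fixed — more carefully, bounding $\sup_{y\in B(x,r_n)}$ by quantities that, after also sending $n\to\infty$ so that $r_n\to0$, collapse onto $x$ — one obtains
\[
 (\AG f_n)(x)\leq \sup_{y\in B(x,r_n)}\Gliminf_{h\downarrow0}\Big(\frac{S(h)f-f}{h}\Big)(y)+\frac{\epsilon}{\kappa(x)}.
\]
Here one must be slightly careful: from a pointwise bound of difference quotients one only directly controls $\Gliminf$ along the realizing subsequence of $f_n$, so the cleanest route is to bound $(\AG f_n)(x)$ by $\sup_{y\in B(x,r_n)}\big(\AG^+ f\big)$-type expression plus $\epsilon/\kappa(x)$ via the upper-semicontinuity of the relevant $\Gamma$-lim sup in $y$, or to pass through the family $(\fe)_{\epsilon>0}$ of Remark~\ref{rem:fe}. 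Letting $n\to\infty$, using $r_n\to0$, $\AG f_n\to g$ in the $\Glimsup$ sense, and continuity of $\kappa$, yields $g(x)\leq \big(\text{relevant liminf of difference quotients of }f\big)(x)+\frac{\epsilon}{\kappa(x)}$; then $\epsilon\downarrow 0$ gives $g\leq\Gliminf_{h\downarrow0}\frac{S(h)f-f}{h}$, equivalently $\Glimsup_{h\downarrow 0}\big(g-\frac{S(h)f-f}{h}\big)\le 0$.

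Finally, combining $\AG^+ f=\Glimsup_{h\downarrow0}\frac{S(h)f-f}{h}\leq g$ from Theorem~\ref{thm:ub} with $g\leq\Gliminf_{h\downarrow0}\frac{S(h)f-f}{h}$ just obtained, and since trivially $\Gliminf\leq\Glimsup$, all three coincide; by the characterization of the $\Gamma$-limit (the two bullet points in the Definition, or Lemma~\ref{lem:gamma}), this means $\frac{S(h)f-f}{h}$ $\Gamma$-converges as $h\downarrow0$ with limit $g$, i.e.\ $f\in D(\AG)$ and $\AG f=g=\Glim_{n\to\infty}\AG f_n$; the last equality uses that $\AG^+ f_n=\AG f_n$ so that $\Glimsup_{n\to\infty}\AG f_n=g$ and, conversely, the lower bound on $g$ propagated back shows $g=\Glim_{n\to\infty}\AG f_n$ rather than merely $\Glimsup$. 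The main obstacle I anticipate is the passage from the pointwise estimate~\eqref{eq:lb}, which features the spatial supremum $\sup_{y\in B(x,r_n)}$, to a clean $\Gamma$-convergence statement at the single point $x$: one has to interchange the limit $h\downarrow0$ (which defines the $\Gamma$-objects via sequences $x_h\to x$) with the shrinking balls $B(x,r_n)$ and the limit $n\to\infty$, and make sure the realizing sequences for $f_n$ and for $f$ are coupled correctly — this is exactly the place where $r_n\to0$ is essential and where one should lean on Remark~\ref{rem:fe} and Lemma~\ref{lem:gamma} rather than argue by hand.
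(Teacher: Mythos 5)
Your overall skeleton matches the paper's: Theorem~\ref{thm:ub} gives $f\in\L^S_+$ and the upper bound, and the hypothesis~\eqref{eq:lb} must supply the matching lower bound. But the lower bound is exactly where your argument has a genuine gap, and the route you sketch does not close it. First, the claim that there is a sequence $h_k\downarrow 0$ with $\big(\tfrac{S(h_k)f_n-f_n}{h_k}\big)(x)\to(\AG f_n)(x)$ is false: $\Gamma$-convergence only provides a \emph{spatial} recovery sequence $y\to x$ along which the difference quotients attain $(\AG f_n)(x)$; at the fixed point $x$ one only has $\limsup_{h}\big(\tfrac{S(h)f_n-f_n}{h}\big)(x)\le(\AG f_n)(x)$, which is the wrong direction. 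Hence applying~\eqref{eq:lb} at $x$ alone and letting $h\downarrow 0$ cannot bound $(\AG f_n)(x)$ from above. To repair this you must evaluate~\eqref{eq:lb} at recovery points $y$ depending on $n$ and on a coupled time step $h$, and these points must lie in the compact set $K$ for which $n_0$, $h_0$ and $(r_n)$ were obtained — if $K$ is chosen after the recovery points are known, the quantifiers become circular. Your fix, ``choose $K$ a compact neighbourhood of $x$'', is moreover unavailable in a general complete separable metric space (Theorem~\ref{thm:lb} is used in infinite-dimensional settings via Lemma~\ref{lem:conv2}).

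The paper resolves precisely these points by a diagonal construction: it first extracts subsequences so that $g_1:=\Glim_{m}\tfrac{S(h_m)f-f}{h_m}$ and $g_2:=\Glim_{l}\AG f_{n_{k_l}}$ exist (Lemma~\ref{lem:gamma}(i)), takes the recovery sequence $x_l\to x$ for $g_2$, and for each $l$ picks $m_l$ and $y_l$ with $d(x_l,y_l)\to 0$ realizing $\AG f_l(x_l)$ up to $\epsilon$; only then is~\eqref{eq:lb} invoked with the single compact set $K=\{y_l\colon l\in\N\}\cup\{x\}$, and the balls $B(y_l,r_l)\subset B(x,\delta_l)$ with $\delta_l\to 0$ are absorbed via Lemma~\ref{lem:gamma}(vii). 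This coupling of $n$, $h$ and the spatial point is the step you explicitly defer (``lean on Remark~\ref{rem:fe} and Lemma~\ref{lem:gamma} rather than argue by hand''), and without it the interchange of $h\downarrow 0$, $\sup_{B(x,r_n)}$ and $n\to\infty$ does not go through: the quantity $\sup_{y\in B(x,r_n)}\Gliminf_h(\cdots)(y)$ involves a ball of \emph{fixed} radius $r_n$, and the inner $\liminf$-type object has no upper semicontinuity that would let the ball collapse onto $x$. Finally, the existence of $\Glim_{n}\AG f_n$ (rather than merely $\Glimsup$) and of $\Glim_{h\downarrow 0}\tfrac{S(h)f-f}{h}$ is obtained in the paper through the subsequence principle of Lemma~\ref{lem:gamma}(ii), which your ``propagated back'' remark gestures at but does not carry out.
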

\begin{proof} 
 First, let $(h_m)_{m\in\N}\subset (0,\infty)$ be a sequence with $h_m\to 0$. Since 
 Theorem~\ref{thm:ub} guarantees that $f\in\LSplus$, we can apply Lemma~\ref{lem:gamma}(i)
 to choose a subsequence, still denoted by $(h_m)_{m\in\N}$, such that
 \begin{equation} \label{eq:g1}
  g_1:=\Glim_{m\to\infty}\frac{S(h_m)f-f}{h_m}\in\Uk
 \end{equation}
 exists. Moreover, since $(\AG f_n)_{n\in\N}$ is bounded above and due to Lemma~\ref{lem:gamma}(i),
 every subsequence $(f_{n_k})_{k\in\N}$ has a further subsequence $(f_{n_{k_l}})_{l\in\N}$ such that 
 \begin{equation} \label{eq:g2}
  g_2:=\Glim_{l\to\infty}\AG f_{n_{k_l}}\in\Uk 
 \end{equation}
 exists. To simplify the notation, we write $f_l:=f_{n_{k_l}}$ for all $l\in\N$. 
 Theorem~\ref{thm:ub} implies
 \[ g_1\leq\AG^+ f\leq\Glim_{l\to\infty}\AG f_l=g_2. \]
	
 Second, we show that $g_1\geq g_2$. To do so, let $x\in X$ and $\epsilon>0$. 
 By definition of the $\Gamma$-limit, we can choose a sequence $(x_l)_{l\in\N}\subset X$
 with $x_l\to x$ such that 
 \[ \Big(\Glim_{l\to\infty}\AG f_l\Big)(x)=\lim_{l\to\infty}\AG f_l(x_l).\]
 In addition, there exist sequences $(m_l)_{l\in\N}$ and $(y_l)_{l\in\N}$ with
 $d(x_l,y_l)\to 0$ such that 
 \[ \AG f_l(x_l)
 	\leq\left(\frac{S(h_{m_l})f_l-f_l}{h_{m_l}}\right)(y_l)+\epsilon
 	\quad\mbox{for all } l\in\N. \]
 Since $K:=\{y_l\colon l\in \N\}\cup \{x\}$ is compact, there exist $h_0>0$ and
 $(r_l)_{l\in\N}\subset (0,\infty)$ satisfying condition~\eqref{eq:lb}. 
 For every $l\in\N$ with $n_{k_l}\geq n_0$ and $h_{m_l}\leq h_0$, 
 \begin{align*}
  \AG f_l(x_l)
  &\leq\sup_{z\in B(y_l,r_l)}\left(\frac{S(h_{m_l})f-f}{h_{m_l}}\right)(z)
  +\frac{\varepsilon}{\kappa(y_l)}+\epsilon \\
  &\leq\sup_{z\in B(x,\delta_l)}\left(\frac{S(h_{m_l})f-f}{h_{m_l}}\right)(z)
  +\sup_{l\in\N}\frac{\epsilon}{\kappa(y_l)}+\epsilon,
 \end{align*}
 where $\delta_l:=d(x,y_l)+r_l\to 0$. Since $\inf_{l\in\N}\kappa(y_l)>0$,
 Lemma~\ref{lem:gamma}(vii) implies
 \[ g_2(x)=\Big(\Glim_{l\to\infty}\AG f_l\Big)(x)=\lim_{l\to\infty}\AG f_l(x_l)
 	\leq\left(\Glim_{l\to\infty}\frac{S(h_{m_l})f-f}{h_{m_l}}\right)(x)=g_1(x).\]

 Third, we show that $f\in D(\AG)$ with $\AG f=\Glim_{n\to\infty}\AG f_n$. From 
 the first part, we know that every sequence $(h_m)_{m\in\N}\subset(0,\infty)$ 
 with $h_m\to 0$ has a subsequence which satisfies equation~\eqref{eq:g1}. A 
 priori the choice of the subsequence and the limit $g_1$ depend on the choice 
 of the sequence $(h_m)_{m\in\N}$. However, it holds $g_1=g_2$ and the function 
 $g_2$ is independent of $(h_m)_{m\in\N}$. Hence, Lemma~\ref{lem:gamma}(ii) 
 implies
 \[ g_1=\Glim_{h\downarrow 0}\frac{S(h)f-f}{h} \]
 which means that $f\in D(\AG)$ with $\AG f=g_1$. Since the limit in equation~\eqref{eq:g2}
 is also independent of the choice of the subsequence, we obtain
 $\AG f=\lim_{n\to\infty}\AG f_n$.
\end{proof}

\subsection{Regularization by convolution and truncation}
\label{sec:conv}

We study two particular constructions for approximating sequences satisfying 
condition~\eqref{eq:lb}.

\subsubsection{Convolution}
\label{sec:conv2}

Let $X$ be a separable Banach space and 
\begin{equation} \label{eq:kappa}
 c_\kappa:=\sup_{x\in X}\sup_{y\in B_X(1)}\frac{\kappa(x)}{\kappa(x-y)}<\infty,
\end{equation}
where $B_X(1):=\{y\in X\colon\|x\|_X\leq 1\}$. Let $(\mu_n)_{n\in\N}$ be a sequence of 
probability measures on the Borel $\sigma$-algebra $\B(X)$ satisfying
\begin{equation} \label{eq:conv2} 
 \mu_n\big(B_X\big(\tfrac{1}{n}\big)^c\big)=0\quad\mbox{for all }n\in\N.
\end{equation}
For every $n\in\N$, $f\in\Uk$ and $x\in X$, we define 
\[ (f*\mu_n)(x):=\int_X f(x-y)\mu_n(\d y)\in\Rbar, \]
where condition~\eqref{eq:kappa} guarantees that the integral is well-defined.

\begin{lemma} \label{lem:conv}
 Let $n\in\N$. Then, it holds $f*\mu_n\in\Uk$ for all $f\in\Uk$ and  
 \[ \Glimsup_{m\to\infty}\, (f_m*\mu_n)
 	\leq\Big(\Glimsup_{m\to\infty}f_m\Big)*\mu_n \]
 for every sequence $(f_m)_{m\in\N}\subset\Uk$ which is bounded above. Furthermore,
 \[ \Glimsup_{n\to\infty}\,(f*\mu_n)\leq f \quad\mbox{for all } f\in\Uk. \]  
\end{lemma}
\begin{proof}
 First, let $n\in\N$, $f\in\Uk$ and $x\in X$. Condition~\eqref{eq:kappa} and 
 condition~\eqref{eq:conv2} imply
 \[ (f*\mu_n)(x)\kappa(x)
 	=\int_{B_X(1)} f(x-y)\kappa(x-y)\frac{\kappa(x)}{\kappa(x-y)}\,\mu_n(\d y)
	\leq c_\kappa\|f^+\|_\kappa.\]
 Moreover, for every sequence $(x_m)_{m\in\N}\subset X$ with $x_m\to x$, Fatou's lemma yields
 \[ \limsup_{m\to\infty}\,(f*\mu_n)(x_m) 
    \leq\int_X \limsup_{m\to\infty}f(x_m-y)\,\mu_n(\d y)
	\leq\int_X f(x-y)\,\mu_n(\d y). \]
 This shows that $f*\mu_n$ is upper semicontinuous. 

 Second, let $(f_m)_{m\in\N}\subset\Uk$ be bounded above, $x\in X$ and $(x_m)_{m\in\N}\subset X$ 
 with $x_m\to x$. It follows from Fatou's lemma that
 \begin{align*}
  &\limsup_{m\to \infty}\,(f*\mu_n)(x_m)
  =\limsup_{m\to \infty}\int_X f_m(x_m-y)\,\mu_n(\d y) \\
  &\leq\int_X \limsup_{m\to \infty}f_m(x_m-y)\,\mu_n(\d y)
  \leq \int_X \Big(\Glimsup_{m\to\infty}f_m\Big)(x-y)\,\mu_n(\d y).
 \end{align*}
 This shows that $\Glimsup_{m\to\infty}(f_m*\mu_n)\leq (\Glimsup_{m\to\infty}f_m)*\mu_n$.
 
 Third, let $x\in X$ and $(x_n)_{n\in\N}\subset X$ with $x_n\to x$. Since $f$ is upper 
 semicontinuous, for every $\epsilon>0$, there exists $n_0\in\N$ such that 
 $f(x_n-y)\leq f(x)+\epsilon$ for all $n\geq n_0$ and $y\in B_X(\nicefrac{1}{n})$. 
 We obtain 
 \[ (f*\mu_n)(x_n)=\int_{B_X(\nicefrac{1}{n})}f(x_n-y)\,\mu_n(\d y)
 	\leq f(x)+\epsilon \quad\mbox{for all } n\geq n_0. \]
 Letting $\epsilon\downarrow 0$ yields $\limsup_{n\to\infty}(f*\mu_n)(x_n)\leq f(x)$.
\end{proof}

Condition~\eqref{eq:trans} in the following lemma is crucial in order to control 
the difference between $S(t)(f*\mu_n)$ and $(S(t)f)*\mu_n$.\ We will encounter this 
condition again in Section~\ref{sec:cher}, where we study the construction of semigroups 
via Chernoff-type approximation schemes. For every $f\colon X\to\R$ and $x,y\in X$, 
we define $(\tau_x f)(y):=f(x+y)$.

\begin{theorem} \label{thm:conv}
 Let $f\in\LSplus$ such that $f_n:=f*\mu_n\in D(\AG)$ for all $n\in\N$. 
 Assume that, for every $\epsilon>0$, there exist $\delta, t_0>0$ with
 \begin{equation} \label{eq:trans}
  S(t)(\tau_x f)\leq\tau_x S(t)f+\tfrac{\epsilon t}{\kappa}
  \quad\mbox{for all } t\in[0,t_0] \mbox{ and } x\in B_X(\delta). 
 \end{equation}
 Then, it holds $f\in D(\AG)$ and $\AG f=\Glim_{n\to\infty}\AG f_n$. 
\end{theorem}
\begin{proof}
 We verify the assumptions of Theorem~\ref{thm:lb}. First, we show that $f_n\to f$. 
 Condition~\eqref{eq:kappa} guarantees $\|f_n\|_\kappa\leq c_\kappa\|f\|_\kappa$ 
 for all $n\in\N$. Moreover, for every $K\Subset X$, 
 \[ \sup_{x\in K}|f_n(x)-f(x)|
	\leq\sup_{x\in K}\int_{B_X(\nicefrac{1}{n})} |f(x-y)-f(x)|\,\mu_n(\d y)
	\to 0\quad\mbox{as } n\to\infty.\]
	
 Second, we show that condition~\eqref{eq:lb} is satisfied. Let $\epsilon>0$ and $K\Subset X$.
 Due to condition~\eqref{eq:trans}, there exist $h_0>0$ and $n_0\in\N$ with
 \[ S(h)(\tau_{-y}f)\leq\tau_{-y}S(h)f+\tfrac{\epsilon h}{\kappa} \quad\mbox{for all } h\in [0,h_0]
 \mbox{ and } y\in B_X(\nicefrac{1}{n_0}). \]
 Hence, for every $h\in [0,h_0]$, $n\geq n_0$ and $x\in X$, it follows from Jensen's inequality 
 and the monotonicity of $S(h)$ that
 \begin{align*}
  &(S(h)f_n)(x)
  =\bigg(S(h)\bigg(\int_{B_X(\nicefrac{1}{n})}(\tau_{-y}f)(\,\cdot\,)\,\mu_n(\d y)\bigg)\bigg)(x) \\
  &\leq\int_{B_X(\nicefrac{1}{n})}\big(S(h)(\tau_{-y}f)\big)(x)\,\mu_n(\d y)
  \leq\int_{B_X(\nicefrac{1}{n})}\Big(\big(\tau_{-y}S(h)f\big)(x)
  	+\tfrac{\epsilon h}{\kappa(x)}\Big)\,\mu_n(\d y) \\
  &=\Big((S(h)f)*\mu_n+\tfrac{\epsilon h}{\kappa}\Big)(x).
 \end{align*}
 For every $h\in (0,h_0]$ and $n\geq n_0$, condition~\eqref{eq:conv2} implies
 \begin{equation*} 
  \frac{S(h)f_n-f_n}{h}
  \leq\left(\frac{S(h)f-f}{h}\right)*\mu_n+\frac{\epsilon}{\kappa}
  \leq\sup_{y\in B_X(\,\cdot\,,\nicefrac{1}{n})}\Big(\frac{S(h)f-f}{h}\Big)(y)+\frac{\epsilon}{\kappa}.
 \end{equation*}

 Third, we show that $(\AG f_n)_{n\in\N}$ is bounded above. The previous inequality applied with
 with $\epsilon:=1$ and condition~\eqref{eq:kappa} yield $h_0>0$ and $n_0\in\N$ with 
 \[ \left\|\frac{(S(h)f_n-f_n)^+}{h}\right\|_\kappa
    \leq c_\kappa\left\|\frac{(S(h)f-f)^+}{h}\right\|_\kappa+1 \]
 for all $h\in [0,h_0]$ and $n\geq n_0$. Hence, it follows from $f\in\LSplus$ that  
 $(\AG^+ f_n)_{n\in\N}$ is bounded above and Theorem~\ref{thm:lb} yields the claim.
\end{proof}

Using the previous approximation results, we obtain the following comparison result which forms the basis 
for comparison in terms of $\Cbi$ and $\Cci$, see Theorem~\ref{thm:conv3} and Theorem~\ref{thm:trunc}  below.
\begin{theorem}\label{thm:conv2}
 Let $(S(t))_{t\geq 0}$ and $(T(t))_{t\geq 0}$ be two convex monotone semigroups on $\Ck$.
 We denote by $\AG^+$ the upper $\Gamma$-generator of $(S(t))_{t\geq 0}$ and by $B$ and $\BG$
 the generator and the $\Gamma$-generator of $(T(t))_{t\geq 0}$, respectively. Let $\C\subset\Ck$
 and $\D\subset\LT$ be two subsets satisfying the following conditions:
 \begin{enumerate}
  \item For every $f\in\D$ and $\epsilon>0$, there exist $\delta,t_0>0$ with 
   \[ T(t)(\tau_x f)\leq \tau_x T(t)f+\tfrac{\epsilon t}{\kappa} 
    \quad\mbox{for all } t\in [0,t_0] \mbox{ and } x\in B_X(\delta). \]
   Furthermore, it holds $f\ast\mu_n\in\C$ for all $n\in\N$ and $f\in\D$.
  \item It holds $T(t)\colon\D\to\D$ for all $t\geq0$.
  \item It holds $\C\cap\LTplus\subset\LSplus\cap D(\BG)$ and $\AG^+f\leq\BG f$ for all $f\in\C\cap\LTplus$.
 \end{enumerate}
 Then, it holds $S(t)f\leq T(t)f$ for all $t\geq 0$ and $f\in\D\cap D(B)$. 
\end{theorem}
\begin{proof}
 The arguments are very similar to the proof of~\cite[Theorem 2.5]{BK22b} but for the sake 
 of a self-contained exposition, we outline the details. In order to apply Theorem \ref{thm:comp2}, 
 we have to show that $\D\subset\LSplus$ and $\AG^+f\leq\BG^+f$ for all $f\in\D$. To do so, 
 let $f\in\D$, $\epsilon>0$ and $f_n:=f\ast\mu_n$ for all $n\in\N$. Choose $\delta,t_0>0$ such 
 that condition~(i) is satisfied. Then, following the lines along the proof of~\cite[Theorem 2.5]{BK22b}, 
 we use Jensen's inequality, condition~(i) and condition~\eqref{eq:kappa} to obtain
\begin{align*}
 \big(T(t)f_n-f_n\big)(x)\kappa(x)
 &\leq\int_{B_X(\delta)}\big(T(t)(\tau_{-y}f)-\tau_{-y}f\big)(x)\kappa(x)\,\mu_n(\d y) \\
 &\leq\int_{B_X(\delta)}\big(\tau_{-y}T(t)f-\tau_{-y}f\big)(x)\kappa(x)\,\mu_n(\d y)+\epsilon t \\
 &\leq c_\kappa\int_{B_X(\delta)}\big(T(t)f-f\big)(x-y)\kappa(x-y)\,\mu_n(\d y)+\epsilon t \\
 &\leq c_\kappa\Big(\big((T(t)f-f)^+\kappa\big)*\mu_n\Big)(x)+\epsilon t
 \end{align*}
 for all $t\in [0,t_0]$, $x\in X$ and $n\in\N$ with $\frac{1}{n}\leq\delta$. Hence,
 \[ \big\|(T(t)f_n-f_n)^+\big\|_\kappa\leq c_\kappa\big\|(T(t)f-f)^+\big\|_\kappa+\epsilon t \]
 for all $t\in [0,t_0]$ and $n\in\N$ with $\frac{1}{n}\leq\delta$. Combining the previous 
 inequality with the conditions~(i) and~(iii) yields $f_n\in\C\cap\LTplus\subset\LSplus\cap D(\BG)$ and
 \[ \|(\AG^+ f_n)^+\|_\kappa\leq\|(\BG f_n)^+\|_\kappa\leq c_\kappa\|(\BG^+ f)^+\|_\kappa+\epsilon
    \quad\mbox{for all } n\in\N \mbox{ with } \tfrac{1}{n}\leq\delta. \]
 It follows from Theorem \ref{thm:ub} and Theorem \ref{thm:conv} that $f\in\LSplus$ and, invoking Lemma~\ref{lem:gamma}(iv),
 \[ \AG^+ f\leq\Glimsup_{n\to\infty}\AG^+ f_n\leq\Glim_{n\to\infty}\BG f_n=\BG f=\BG^+ f. \]
 Hence, Theorem~\ref{thm:comp2} implies $S(t)f\leq T(t)f$ for all $t\geq 0$ and $f\in\D\cap D(B)$.
\end{proof}

For the rest of this subsection, let $X:=\Rd$ and $\eta\colon\Rd\to\R_+$ be an infinitely 
differentiable function with $\supp(\eta)\subset B_{\Rd}(1)$ and $\int_{\Rd}\eta(x)\,\d x=1$. 
For every $n\in\N$, $x\in\Rd$ and locally integrable function $f\colon\Rd\to\R$, we define 
$\eta_n(x):=n^d\eta(nx)$ and
\[ (f*\eta_n)(x):=\int_{\Rd} f(x-y)\eta_n(y)\,\d y.\]
Denote by $\Lipb$ the space of all bounded Lipschitz continuous functions $f\colon\Rd\to\R$ 
and by $\Cbi$ the space of all bounded infinitely differentiable functions $f\colon\Rd\to\R$ 
such that the partial derivatives of any order are bounded.

The following theorem uniquely characterizes semigroups via their generators evaluated 
at smooth functions.\ In particular, while a thorough understanding of the $\Gamma$-generator
and related topological properties is crucial in order to prove the abstract comparison
principle in Subsection~\ref{sec:comp}, in applications one only has to verify several explicit
conditions.

\begin{theorem} \label{thm:conv3}
 Let $(S(t))_{t\geq 0}$ and $(T(t))_{t\geq 0}$ be two convex monotone semigroups on $\Ck$ 
 with generators $A$ and $B$, respectively. Assume that $(T(t))_{t\geq 0}$ satisfies 
 condition~\eqref{eq:trans} for all $f\in\LT\cap\Lipb$ and that $T(t)\colon\Lipb\to\Lipb$
 for all $t\geq 0$. Furthermore, suppose that $\Cbi\subset D(A)\cap D(B)$ and
 \[ Af\leq Bf\quad\text{for all }f\in \Cbi. \]
 Then, it holds $S(t)f\leq T(t)f$ for all $t\geq 0$ and $f\in \Ck$. 
\end{theorem}
\begin{proof}
 Applying Theorem~\ref{thm:conv2} with $\D:=\LT\cap\Lipb$ and $\C:=\Cbi$ yields $S(t)f\leq T(t)f$ 
 for all $t\geq 0$ and $f\in D(B)\cap\Lipb$. Since the set $\Cbi\subset D(B)\cap\Lipb$ is dense 
 in $\Ck$, see Remark~\ref{rem:dense} below, Lemma~\ref{lem:uniform} implies $S(t)f\leq T(t)f$ 
 for all $t\geq 0$ and $f\in\Ck$.
\end{proof}

\subsubsection{Truncation}

Let $X:=\Rd$ and denote by $\Cci$ the space of all infinitely differentiable functions 
$f\colon\Rd\to\R$ with compact support. Let $(\phi_n)_{n\in\N}\subset\Cci$ be a sequence 
with $0\leq\phi_n\leq 1$ for all $n\in\N$ and $\phi_n(x)=1$ for all $n\in\N$ and $x\in B_{\Rd}(n)$.

\begin{lemma}\label{lem:trunc}
 Suppose that $\Cci\subset D(A)$.\ Let $f\in\Ck$ be infinitely differentiable with $f\geq 0$ 
 and define $f_n:=f\phi_n$ for all $n\in\N$.
 \begin{enumerate}
  \item If $f\in\LSplus$, then $(Af_n)(x)\leq (\AG^+f)(x)$ for all $n\in\N$ and $x\in B_{\Rd}(n)$. 
  \item If $(A f_n)_{n\in\N}$ is bounded above, then $f\in D(A_\Gamma)$ and 
	$A_\Gamma f=\Glim_{n\to\infty}Af_n$. 
 \end{enumerate}
\end{lemma}	
\begin{proof}
 First, for every $h>0$, $n\in\N$ and $x\in B_{\Rd}(n)$, the monotonicity of $S(h)$ yields
 \begin{equation}\label{eq:trunc}
  \frac{(S(h)f_n)(x)-f_n(x)}{h}=\frac{(S(h)f_n)(x)-f(x)}{h}\leq\frac{(S(h)f)(x)-f(x)}{h}.
 \end{equation}
 Since $\Cci\subset D(A)$, it follows from $f\in\LSplus$ and the previous inequality that
 \[ (Af_n)(x)\leq (\AG^+f)(x) \quad\mbox{for all } n\in\N \mbox{ and } x\in B_{\Rd}(n). \] 

 Second, inequality~\eqref{eq:trunc} guarantees that condition~\eqref{eq:lb} is satisfied. 
 Furthermore, it holds $f_n\in D(A)$ for all $n\in\N$ and $f_n\to f$. Hence, if $(A f_n)_{n\in\N}$ 
 is bounded above, Theorem~\ref{thm:lb} implies $f\in D(A_\Gamma)$ and $A_\Gamma f=\Glim_{n\to\infty}Af_n$. 
\end{proof}

\begin{theorem} \label{thm:trunc}
 Let $(S(t))_{t\geq 0}$ and $(T(t))_{t\geq 0}$ be two convex monotone semigroups on $\Ck$ 
 with generators $A$ and $B$, respectively. Assume that $(T(t))_{t\geq 0}$ satisfies 
 condition~\eqref{eq:trans} for all $f\in\LT\cap\Lipb$ with $f\geq 0$ and that
 $T(t)\colon\Lipb\to\Lipb$ for all $t\geq 0$. Furthermore, let $\Cci\subset D(A)\cap D(B)$
 and let $(Bf_n)_{n\in\N}$ be bounded above for all $f\in\Cbi\cap\LTplus$ with $f\geq 0$, 
 where $f_n:=f\phi_n$ for all $n\in\N$. Assume that 
 \[ Af\leq Bf \quad\mbox{for all } f\in \Cci. \]
 Then, it holds $S(t)f\leq T(t)f$ for all $t\geq 0$ and $f\in\Ck$ with $f\geq 0$. 
\end{theorem}
\begin{proof}
 Let $\C:=\{f\in\Cbi\colon f\geq0\}$ and $\D:=\{f\in\LT\cap\Lipb\colon f\geq0\}$. 
 It follows from Theorem~\ref{thm:ub} and Lemma~\ref{lem:trunc} that 
 $\C\cap\LTplus\subset\LSplus\cap D(\BG)$ and 
 \[ \AG^+ f\leq\Glimsup_{n\to\infty}Af_n\leq\Glim_{n\to\infty}Bf_n=\BG f
    \quad\mbox{for all } f\in\C\cap\LTplus. \]
 Hence, Theorem~\ref{thm:conv2} yields $S(t)f\leq T(t)f$ for all $t\geq 0$ and 
 $f\in D(B)\cap\Lipb$ with $f\geq 0$. Since the set
 $\{f\in\Cci\colon f\geq 0\}\subset\{D(B)\cap\Lipb\colon f\geq 0\}$ is dense in
 $\{f\in\Ck\colon f\geq 0\}$, Lemma~\ref{lem:uniform} implies $S(t)f\leq T(t)f$ for all 
 $t\geq 0$ and $f\in\Ck$ with $f\geq 0$. 
\end{proof}

Since inequality~\eqref{eq:trunc} is only valid for non-negative functions, 
the previous theorem has a priori the same restriction.\ However, in many
applications one can show that $S(t)(f+c)=S(t)f+c$ and $T(t)(f+c)=T(t)f+c$ for
all $t\geq 0$, $f\in\Ck$ and $c\in\R$. In this case, we obtain $S(t)f\leq T(t)f$
for all $t\geq 0$ and $f\in\Ck$.

\subsection{Link to distributional derivatives}
\label{sec:sobolev}

Let $X:=\Rd$. In many applications, one can show that $\Cbi\subset D(A)$ and 
$Af=H((D^\alpha f)_{\alpha\in I})$ for all $f\in\Cbi$, where $I\subset\N_0^d$ 
is a finite index set and $H\colon\R^I\to\R$ is a convex function. Furthermore, 
one can show that 
\[ S(t)\colon\LSsym\cap\Lipb\to\LSsym\cap\Lipb \quad\mbox{for all } t\geq 0 \]
and the set $\LSsym\cap\Lipb$ often admits an explicit representation by means of 
Sobolev spaces. The latter allows to define $H((D^\alpha f)_{\alpha\in I})$, where 
the partial derivatives $D^\alpha f$ are regular distributions. In Subsection~\ref{sec:conv2}, 
we also identified conditions such that
\[ \LSsym\cap\Lipb\subset\LSplus\cap\Lipb\subset D(\AG). \]
Hence, the question arises whether $u(t):=S(t)f$ solves the equation
\[ \AG u(t)=H\big((D^\alpha u(t))_{\alpha\in I}\big) \quad\mbox{for all } t\geq 0. \]
The left-hand side of this equation is an upper semicontinuous function while the right-hand 
side is an equivalence class of functions coinciding almost everywhere and the upper 
semicontinuous hull strongly depends on the choice of the representative. However, 
a canonical choice is given by Lebesgue's differentiation theorem. For a locally 
integrable function $f\colon\Rd\to\R$, we define\footnote{Denoting by $\lambda$ the 
Lebesgue measure, the normalized integral is given by
\[ \fint_{B(x,r)}f(y)\,\d y:=\frac{1}{\lambda(B(x,r))}\int_{B(x,r)}f(y)\,\d y.\]}
\[ X_f:=\left\{x\in\Rd\colon\lim_{r\downarrow 0}\fint_{B(x,r)}f(y)\,\d y\in\R \mbox{ exists}\right\}. \]
The set $X_f\subset\Rd$ is dense and it holds $X_f=X_g$ for any other function 
$g\colon\Rd\to\R$ such that $f=g$ almost everywhere. Let $\eta\colon\Rd\to\R_+$ be an 
infinitely differentiable function satisfying $\supp(\eta)\subset B_{\Rd}(1)$ and 
$\int_{\Rd}\eta(x)\,\d x=1$. For every $n\in\N$, $x\in\Rd$ and locally integrable 
function $f\colon\Rd\to\R$, we define $\eta_n(x):=n^d\eta(nx)$ and
\[ (f*\eta_n)(x):=\int_{\Rd} f(x-y)\eta_n(y)\,\d y.\]
The functions $f*\eta_n$ are infinitely differentiable. We also want to emphasize that 
the construction of the function $\overline{g}$ in the following theorem does not depend 
on the choice of the representative for the distributional derivatives $D^\alpha f$.

\begin{theorem} \label{thm:dist}
 Let $I\subset\N_0^d$ be a finite index set and let $H\colon\R^I\to\R$ be a convex 
 function. Let $f\in\LSplus$ such that, for every $\epsilon>0$, there exists $\delta, t_0>0$ with
 \[ S(t)(\tau_x f)\leq\tau_x S(t)f+\tfrac{\epsilon t}{\kappa}
    \quad\mbox{for all } t\in[0,t_0] \mbox{ and } x\in B_{\Rd}(\delta). \]
 For every $n\in\N$, we define $f_n:=f*\eta_n$ and assume that
 \begin{enumerate}
  \item $D^\alpha f$ is a regular distribution for all $\alpha\in I$, 
  \item $H((D^\alpha f)_{\alpha\in I})$ is locally integrable, 
  \item $f_n\in D(\AG)$ and $\AG f_n=H((D^\alpha f_n)_{n\in\N})$.
 \end{enumerate}
 Furthermore, we define the functions
 \begin{align*}
  g(x) &:=H\big((D^\alpha f(x))_{\alpha\in I}\big) \quad\mbox{for all } x\in\Rd,  \\
  \tilde{g}(x) &:=\lim_{r\downarrow 0}\fint_{B(x,r)}g(y)\,\d y \quad\mbox{for all } x\in X_g, \\
  \overline{g}(x) &:=\limsup_{y\in X_g,y\to x}\widetilde g(y) \quad\mbox{for all } x\in\Rd.
 \end{align*}
 Then, it holds $f\in D(\AG)$ and $(\AG f)(x)=\overline{g}(x)$ for all $x\in\Rd$. 
\end{theorem}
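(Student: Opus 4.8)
The plan is to use Lemma~\ref{lem:conv2} as the central engine and then to reconcile the $\Gamma$-limit it produces with the pointwise representative $\overline g$. First I would verify that the hypotheses of Lemma~\ref{lem:conv2} are met: the translation-Lipschitz estimate~\eqref{eq:trans} is assumed verbatim, and the assumption $f_n = f*\eta_n \in D(\AG)$ is part of hypothesis~(iii). Since the mollifiers $\eta_n$ have support in $B_{\R^d}(1/n)$, the measures $\mu_n(A):=\int_A\eta_n(y)\,\d y$ satisfy~\eqref{eq:conv2}, and $c_\kappa<\infty$ follows from the standing assumption~\eqref{eq:kappa} on $\kappa$. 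Thus Lemma~\ref{lem:conv2} gives $f\in D(\AG)$ and
\[ \AG f = \Glim_{n\to\infty}\AG f_n = \Glim_{n\to\infty} H\big((D^\alpha f_n)_{\alpha\in I}\big), \]
using hypothesis~(iii) for the last equality. This already settles the membership $f\in D(\AG)$; it remains to identify this $\Gamma$-limit with $\overline g$ pointwise.

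Next I would analyze the convolved derivatives. Since $D^\alpha f$ exists as a regular distribution (hypothesis~(i)), differentiation commutes with convolution, so $D^\alpha f_n = (D^\alpha f)*\eta_n$ as functions. Standard mollifier theory then gives $D^\alpha f_n(x)\to\widetilde{D^\alpha f}(x)$ for every $x$ in the common Lebesgue-type set, and in particular on a dense set; combined with local $L^1$ convergence and continuity of $H$ one gets $H((D^\alpha f_n)_{\alpha\in I})\to \tilde g$ pointwise on $X_g$ (up to passing to the intersection of the relevant full-measure sets, which is again dense and representative-independent as noted in the paragraph preceding the theorem). The key structural input is convexity of $H$: it forces $g_n:=H((D^\alpha f_n)_{\alpha\in I})$ to be, in an averaged sense, bounded above by a mollification of $g$, via Jensen's inequality $H((D^\alpha f*\eta_n)(x))=H(\int D^\alpha f(x-y)\eta_n(y)\d y)\le \int H((D^\alpha f(x-y))_\alpha)\eta_n(y)\d y = (g*\eta_n)(x)$. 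So $g_n \le g*\eta_n$ everywhere.

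The final step is the pointwise identification. The $\Gamma$-limsup of $(g_n)$ at $x$ is $\sup\{\limsup_n g_n(x_n): x_n\to x\}$. Using $g_n\le g*\eta_n$ and the upper semicontinuity bookkeeping in Lemma~\ref{lem:conv} (applied to the family $g_n$), the $\Gamma$-limsup is dominated by the upper semicontinuous hull of $\tilde g$, i.e.\ by $\overline g$; conversely, evaluating the $\Gamma$-limit along sequences $x_n\to x$ chosen inside $X_g$ approaching a near-maximizer of $\widetilde g$ near $x$, together with the pointwise convergence $g_n(x_n)\to\widetilde g(\lim x_n)$ on $X_g$, gives the reverse inequality $\Glim_n g_n(x)\ge\overline g(x)$. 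Hence $(\AG f)(x)=\overline g(x)$ for all $x\in\R^d$.

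The main obstacle I anticipate is the last step: the mollified derivatives $D^\alpha f_n(x)$ need not converge to $\widetilde{D^\alpha f}(x)$ at \emph{every} point, only on a representative-independent dense set, so care is needed to align the sequences $x_n\to x$ witnessing the $\Gamma$-limit with points where this convergence holds and where $\widetilde g$ is close to $\overline g(x)$. Proving the $\ge$ inequality $\Glim_n g_n(x)\ge\overline g(x)$ therefore requires a diagonal selection: first pick $y^{(k)}\in X_g$ with $y^{(k)}\to x$ and $\widetilde g(y^{(k)})\to\overline g(x)$, then for each $k$ pick $n_k$ large with $g_{n_k}(y^{(k)})$ close to $\widetilde g(y^{(k)})$, and assemble the resulting approximating sequence. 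The convexity-driven upper bound $g_n\le g*\eta_n$ is what keeps the $\le$ direction clean and prevents the $\Gamma$-limsup from overshooting $\overline g$.
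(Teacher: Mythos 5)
Your first step coincides with the paper's: Lemma~\ref{lem:conv2} gives $f\in D(\AG)$ and $\AG f=\Glim_{n\to\infty}\AG f_n=\Glim_{n\to\infty}H((D^\alpha f_n)_{\alpha\in I})$, and the Jensen estimate $H(F_n(x))\leq (g*\eta_n)(x)$ with $F_n:=(D^\alpha f_n)_{\alpha\in I}$ is exactly the convexity input the paper uses. The genuine gap is in your lower bound $(\AG f)(x)\geq\overline g(x)$. Your diagonal construction rests on the claim that $g_n(y)\to\tilde g(y)$ for every $y\in X_g$, but this is not available: $g_n(y)=H(F_n(y))$ converges to $H(F(y))=g(y)$ only at points $y$ lying in the Lebesgue sets of all the derivatives $D^\alpha f$, and $g(y)=\tilde g(y)$ only almost everywhere; neither is guaranteed on all of $X_g$ (membership in $X_g$ only says the ball averages of $g$ converge). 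Since $\overline g(x)$ is a $\limsup$ of $\tilde g$ over $X_g$, your near-maximizers $y^{(k)}$ may sit precisely at points where the pointwise convergence fails, and convexity is of no help here because Jensen gives only the one-sided bound $g_n\leq g*\eta_n$, which points the wrong way. To repair this you would need an extra averaging argument showing that the $\limsup$ over $X_g$ is unchanged when restricted to the full-measure set where the convergence does hold (e.g.\ via $\tilde g(y)=\lim_r\fint_{B(y,r)}g\leq\limsup_{z\in E,\,z\to y}\tilde g(z)$ for $E\subset X_g$ of full measure); you flag the obstacle but your proposed diagonal selection does not supply this step. The paper avoids recovery sequences for this direction altogether: from $g=\lim_n\AG f_n\leq\Glimsup_n\AG f_n=\AG f$ almost everywhere it averages over balls to get $\tilde g\leq\AG f$ on $X_g$, and then uses upper semicontinuity of $\AG f\in\Uk$ to conclude $\overline g\leq\AG f$ everywhere; this also yields the finiteness of $\overline g$ as a byproduct.

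A secondary, fixable issue is in your upper bound: Lemma~\ref{lem:conv} cannot be applied to $g*\eta_n$ as stated, since $g$ is merely locally integrable, not upper semicontinuous, and it is not yet known that $\overline g\in\Uk$ (its boundedness above is obtained in the paper from the other inequality $\tilde g\leq\AG f$). The paper instead evaluates along a recovery sequence $x_n\to x$ for $\AG f=\Glim_n\AG f_n$, uses $g\leq\overline g$ a.e.\ inside the Jensen bound, and then the upper semicontinuity and finiteness of $\overline g$ near $x$ to get $(\AG f_n)(x_n)\leq\overline g(x)+\epsilon$; you should either follow that route or first establish an upper bound on $(\AG f_n)_{n\in\N}$ (as in the proof of Lemma~\ref{lem:conv2}) before invoking any version of Lemma~\ref{lem:conv}.
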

\begin{proof}
 It follows from Theorem~\ref{thm:conv} that $f\in D(\AG)$ and $\AG f=\Glim_{n\to\infty}Af_n$. 
 First, we show that $\overline{g}\leq\AG f$. Define $F:=(D^\alpha f)_{\alpha\in I}$ and 
 $F_n:=(D^\alpha f_n)_{\alpha\in I}$ for all $n\in\N$. Since $F_n=F*\eta_n$ for all $n\in\N$, 
 condition~(i) implies $F_n\to F$ almost everywhere. We use the continuity of $H$ and 
 condition~(iii) to obtain
 \[ g=H(F)=\lim_{n\to\infty}H(F_n)=\lim_{n\to\infty}\AG f_n 
 	\leq\Glimsup_{n\to\infty}\AG f_n=\AG f \]
 almost everywhere and therefore
 \[ \fint_{B(x,r)} g(y)\,\d y\leq\fint_{B(x,r)}(\AG f)(y)\,\d y
 	\quad\mbox{for all } x\in\Rd \mbox{ and } r>0. \]
 For every $x\in X_g$, it follows from the upper semicontinuity of $\AG f$
 that
 \[ \tilde{g}(x)=\lim_{r\downarrow 0}\fint_{B(x,r)} g(y)\,\d y
 	\leq\limsup_{r\downarrow 0}\fint_{B(x,r)}(\AG f)(y)\,\d y
 	\leq (\AG f)(x). \]
 By condition~(ii), the set $X_g\subset\Rd$ is dense and the upper semicontinuity of $\AG f$ yields
 \[ \overline{g}(x)=\limsup_{y\in X_g,y\to x}\tilde{g}(y)
 	\leq\limsup_{y\in X_g,y\to x}(\AG f)(y) \leq (\AG f)(x)
 	\quad\mbox{for all } x\in\Rd. \]
 
 Second, we show that $\AG f\leq\overline{g}$. Let $x\in\Rd$ and $\epsilon>0$. Since 
 $\AG f=\Glim_{n\to\infty}\AG f_n$, there exists a sequence $(x_n)_{n\in\N}\subset\Rd$ 
 with $x_n\to x$ and $(\AG f_n)(x_n)\to (\AG f)(x)$. Moreover, there exists $\delta>0$ 
 with $\overline{g}(y)<\overline{g}(x)+\epsilon$ for all $y\in B(x,\delta)$ because 
 $\overline{g}$ is upper semicontinuous. Choose $n_0\in\N$ with $B(x_n,\nicefrac{1}{n})\subset B(x,\delta)$ 
 for all $n\geq n_0$. Since condition~(ii) implies $H(F)=g=\tilde{g}\leq\overline{g}$,
 for every $n\geq n_0$, it follows from Jensen's inequality that 
 \begin{align*}
  \big(\AG f_n\big)(x_n)=H\big(F_n(x_n)\big)
  &=H\left(\int_{B(\nicefrac{1}{n})} F(x_n-y)\eta_n(y)\,\d y\right) \\
  &\leq\int_{B(\nicefrac{1}{n})}H\big(F(x_n-y)\big)\eta_n(y)\,\d y \\
  &\leq\int_{B(\nicefrac{1}{n})}\overline{g}(x_n-y)\eta_n(y)\,\d y 
  \leq\overline{g}(x)+\epsilon.
 \end{align*}
 We obtain $(\AG f)(x)=\lim_{n\to\infty}\AG f_n(x_n)\leq\overline{g}(x)$ 
 for all $x\in\Rd$.
\end{proof}

\section{Chernoff-type approximation schemes}
\label{sec:cher}

Let $(I(t))_{t\geq 0}$ be a family of operators $I(t)\colon\Ck\to\Ck$ and $(h_n)_{n\in\N}\subset (0,\infty)$ 
be a sequence with $h_n\to 0$. For every $t\geq 0$, $f\in\Ck$ and $n\in\N$, we define 
\[ I(\pi^t_n)f:=I(h_n)^{k_n^t}f=\underbrace{\big(I(h_n)\circ\ldots\circ I(h_n)\big)}_{k_n^t \text{ times}}f, \]
where $k_n^t:=\max\{k\in\N_0\colon kh_n\leq t\}$ and $\pi^t_n:=\{0,h_n,\ldots, k_n^t h_n\}$.
Recall that the Lipschitz set $\LI$ consists of all $f\in\Ck$ such that there exist $c\geq 0$ and $t_0>0$ with
\[ \|I(t)f-f\|_\kappa\leq ct \quad\mbox{for all } t\in [0,t_0].\]
Moreover, for every $f\in\Ck$ such that the following limit exists, we define 
 \[ I'(0)f:=\lim_{h\downarrow 0}\frac{I(h)f-f}{h}\in\Ck. \]
By definition of the mixed topology, the existence of $I'(0)f$ implies $f\in\LI$.

\begin{assumption} \label{ass:I}
 Suppose that the following conditions are valid:
 \begin{enumerate}
  \item $I(0)=\id_{\Ck}$.
  \item $I(h_n)$ is convex and monotone with $I(h_n)0=0$ for all $n\in\N$.
  \item There exists a function $\alpha\colon\R_+\times\R_+\to\R_+$, which is 
   non-decreasing in the second argument, such that, for every $r\geq 0$
   and $k,l,n\in\N$,
   \[ I(h_n)\colon B_{\Ck}(r)\to B_{\Ck}(\alpha(r,h_n)) \quad\mbox{and}\quad
   	\alpha(\alpha(r,kh_n),lh_n)\leq\alpha(r,(k+l)h_n). \]
  \item For every $r\geq 0$, there exists $\omega_r\geq 0$ with
   \[ \|I(h_n)f-I(h_n)g\|_\kappa\leq e^{h_n\omega_r}\|f-g\|_\kappa
   	\quad\mbox{for all } n\in\N \mbox{ and } f,g\in B_{\Ck}(r). \]
   Moreover, the mapping $r\mapsto\omega_r$ is non-decreasing.
  \item There exists a countable set $\D\subset\LI$ such that $(I(\pi_n^t)f)_{n\in\N}$ is uniformly 
   equicontinuous for all $(f,t)\in\D\times\T$ and, for every $f\in\Ck$, there exists $(f_n)_{n\in\N}\subset\D$ 
   with $\sup_{n\in\N}\|f_n\|_\kappa\leq\|f\|_\kappa$ and $f_n\to f$.
  \item For every $\epsilon>0$, $r,T\geq 0$ and $K\Subset X$, there exist
   $K'\Subset X$ and $c\geq 0$ with
   \[ \|I(\pi_n^t)f-I(\pi_n^t)g\|_{\infty, K}\leq c\|f-g\|_{\infty, K'}+\epsilon \]
   for all $t\in [0,T]$, $n\in\N$ and $f,g\in B_{\Ck}(r)$.
 \end{enumerate}
\end{assumption}

The conditions~(i)-(iv) only involve the one-step operators $I(h_n)$ and are then
transferred to the iterated operators $I(\pi_n^t)$. Furthermore, in many applications, 
condition~(v) can be verified as described in the following remark.

\begin{remark} \label{rem:dense}
 Let $X:=\Rd$ and assume that $\Cci\subset\LI$, where $\Cci$ denotes the space of all infinitely 
 differentiable functions $f\colon\Rd\to\R$ with compact support.\ Subsequently, we construct a 
 countable set $\D\subset\Cci$ such that, for every $f\in\Ck$, there exists $(f_n)_{n\in\N}\subset\D$ 
   with $\sup_{n\in\N}\|f_n\|_\kappa\leq\|f\|_\kappa$ and $f_n\to f$. Let $n\in\N$ and denote 
 by ${\rm C}(B(n))$ the space of all continuous functions $f\colon B(n)\to\R$, where $B(n):=B_{\Rd}(n)$. 
 Due to the Stone--Weierstra\ss{} theorem, the set $\D_n\subset {\rm C}(B(n))$ of all polynomials 
 $f\colon B(n)\to\R$ with rational coefficients is dense w.r.t. the norm $\|f\|_{\kappa,B(n)}:=\sup_{x\in B(n)}|f(x)|\kappa(x)$. Let 
 $\zeta\in\Cci$ with $0\leq\zeta\leq 1$, $\zeta\equiv 1$ on $B(1)$ and $\zeta\equiv 0$ on 
 $B(2)^c$. Define $\zeta_n(x):=\zeta(\frac{x}{n})$ for all $n\in\N$ and $x\in\Rd$. Furthermore, 
 by setting $(f\zeta_n)(x):=0$ for all $x\in B(2n)^c$, we define  
 \[ \D:=\bigcup_{n\in\N}\{f\zeta_n\colon f\in\D_{2n}\}\subset\Cci. \]
 Let $f\in\Ck$. For every $n\in\N$, there exists $\tilde{f}_n\in\D_{2n}$ with 
 $\|f|_{B(2n)}-\tilde{f}_n\|_{\kappa,B(2n)}\leq\frac{1}{n}$ and
 $\|\tilde{f}_n\|_{\kappa,B(2n)}\leq\|f|_{B(2n)}\|_{\kappa,B(2n)}\leq\|f\|_{\kappa,\Rd}$. 
 Hence, the sequence $(f_n)_{n\in\N}\subset\D$ given by $f_n:=\tilde{f}_n\zeta_n$ satisfies 
$\|f_n\|_{\kappa,\Rd}\leq\|f\|_{\kappa,\Rd}$ and $f_n\to f$. 
 Hence, Assumption~\ref{ass:I}(v) is, for example, satisfied if there exists $c\geq 0$ with $I(t)\colon\Lipb(r)\to\Lipb(e^{ct}r)$ 
 for all $r,t\geq 0$, where $\Lipb(r)$ contains all $r$-Lipschitz functions $f\colon\Rd\to\R$ 
 with $\|f\|_\infty\leq r$.
\end{remark}

It remains to discuss how condition~(vi) can be verified. For the examples presented
in Section~\ref{sec:examples}, we rely on the fact that, due to Lemma~\ref{lem:cont.app}, 
condition~(vi) is equivalent to 
\[ \sup_{(t,x)\in [0,T]\times K}\sup_{n\in\N}\big(I(\pi_n^t)f_k\big)(x)\downarrow 0 
	\quad\mbox{as } k\to\infty \]
for all $T\geq 0$, $K\Subset X$ and $(f_k)_{k\in\N}\subset\Ck$ with $f_k\downarrow 0$.
There also exist several verifiable sufficient conditions on the one-step operators $I(t)$,
one of which is presented in the next remark. In applications, this condition can be satisfied
by requiring finiteness of certain moments which appears very naturally, see, e.g.,~\cite{BK22b}.
A detailed discussion of several further sufficient conditions can be found in~\cite[Subsection~2.5]{BKN23}.

\begin{remark} \label{rem:I}
 Let $\tilde{\kappa}\colon X\to (0,\infty)$ be a bounded continuous function such that, 
 for every $\epsilon>0$, there exists $K\Subset X$ with $\sup_{x\in K^c}\frac{\tilde{\kappa}(x)}{\kappa(x)}\leq\epsilon$. 
 Let $\tilde{\alpha}\colon\R_+\times\R_+\to\R_+$ be a function, which is non-decreasing in 
 the second argument, such that
 \begin{equation} \label{eq:tilde}
  \|I(h_n)f\|_{\tilde{\kappa}}\leq\tilde{\alpha}(r,h_n) \quad\mbox{and}\quad
  \tilde{\alpha}(\tilde{\alpha}(r,kh_n),lh_n)\leq\tilde{\alpha}(r,(k+l)h_n) 
 \end{equation}
 for all $r\geq 0$, $k,l,n\in\N$ and $f\in\Ck$ with $\|f\|_{\tilde{\kappa}}\leq r$. Then, for every 
 $(f_n)_{n\in\N}\subset\Ck$ with $f_n\downarrow 0$, Dini's theorem implies $\|f_n\|_{\tilde{\kappa}}\to 0$. 
 It follows from Assumption~\ref{ass:I}(ii), equation~\eqref{eq:tilde},~\cite[Lemma~2.7]{BK23} 
 and Lemma~\ref{lem:lip}(ii) that
   \[ \sup_{t\in [0,T]}\sup_{n\in\N}\|I(\pi_n^t)f_k\|_{\tilde{\kappa}}
   	\leq\tilde{\alpha}(3\|f_1\|_{\tilde{\kappa}},t)\|f_k\|_{\tilde{\kappa}}\to 0
 	\quad\mbox{as } k\to\infty. \]
 Since $\inf_{x\in K}\kappa(x)>0$ for every $K\Subset X$, Assumption~\ref{ass:I}(vi) is satisfied.
\end{remark}

The following theorem is based on the work in~\cite{BK23}, where Chernoff-type approximation
schemes for nonlinear semigroups have been systematically investigated.\ The results in~\cite{BK23} 
require relative compactness of the sequence $(I(\pi_n^t)f)_{n\in\N}$ w.r.t.\ a given metric (here the one induced by $\|\cdot\|_\kappa$). A close 
inspection of the proofs given in~\cite{BK23} reveals that relative compactness of the sequence 
$(I(\pi_n^t)f)_{n\in\N}$ w.r.t.\ the mixed topology is sufficient as long as Assumption~\ref{ass:I}(vi) 
can be verified. This condition also guarantees that the corresponding semigroup $(S(t))_{t\geq 0}$ 
is continuous w.r.t. the mixed topology. For the reader's convenience, we provide a detailed proof 
in Appendix~\ref{app:I}, where we thoroughly explain how the arguments from~\cite{BK23} have to be 
modified. Subsequently, we define
\[ \|f\|_{\infty,Y}:=\sup_{x\in Y}|f(x)|
	\quad\mbox{for all } f\colon X\to\R \mbox{ and } Y\subset X. \]

\begin{theorem} \label{thm:I}
 Suppose that Assumption~\ref{ass:I} is satisfied and let $\T\subset\R_+$ be a countable 
 dense set including zero.\ Then, there exist a strongly continuous convex monotone semigroup 
 $(S(t))_{t\geq 0}$ on $\Ck$ with $S(t)0=0$ and a subsequence $(n_l)_{l\in\N}\subset\N$ such that
 \begin{equation} \label{eq:cher2.IS}
  S(t)f=\lim_{l\to\infty}I(\pi_{n_l}^t)f \quad\mbox{for all } (f,t)\in\Ck \times\T.
 \end{equation}
 Furthermore, the following statements are valid:
 \begin{enumerate} 
  \item It holds $f\in D(A)$ and $Af=I'(0)f$ for all $f\in\Ck$ such that $I'(0)f\in\Ck$ exists. 
  \item For every $r,t\geq 0$ and $f,g\in B_{\Ck}(r)$, 
   \[ \|S(t)f\|_\kappa\leq\alpha(r,t) \quad\mbox{and}\quad 
   	\|S(t)f-S(t)g\|_\kappa\leq e^{t\omega_{\alpha(r,t)}}\|f-g\|_\kappa. \]
  \item For every $\epsilon>0$, $r,T\geq 0$ and $K\Subset X$, there exist $K'\Subset X$ 
   and $c\geq 0$ with
   \[ \|S(t)f-S(t)g\|_{\infty,K}\leq c\|f-g\|_{\infty,K'}+\epsilon \]
   for all $t\in [0,T]$ and $f,g\in B_{\Ck}(r)$.
  \item It holds $\LI\subset\LS$ and $S(t)\colon\LS\to\LS$ for all $t\geq 0$. 
 \end{enumerate}
\end{theorem}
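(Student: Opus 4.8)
The plan is to obtain the semigroup $(S(t))_{t\geq 0}$ as a limit of the discrete iterates $I(\pi_n^t)$ along a suitable subsequence, following the Chernoff-type construction in~\cite{BK22+}, and then to verify the listed properties one at a time. First I would use Assumption~\ref{ass:I}(iii)--(iv) together with~\cite[Lemma~2.7]{BK22+} and Lemma~\ref{lem:lip} to obtain, for each $r,T\geq 0$, a uniform bound $\|I(\pi_n^t)f\|_\kappa\leq\alpha(r,T)$ and a uniform Lipschitz estimate $\|I(\pi_n^t)f-I(\pi_n^t)g\|_\kappa\leq e^{T\omega_{\alpha(r,T)}}\|f-g\|_\kappa$ for all $t\in[0,T]$, $n\in\N$ and $f,g\in B_{\Ck}(r)$; here the superadditivity of $\alpha$ and $\omega$ being non-decreasing are exactly what make these bounds stable under composition. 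On the countable set $\D\times\T$ the sequence $(I(\pi_n^t)f)_{n\in\N}$ is uniformly equicontinuous by Assumption~\ref{ass:I}(v), so by a diagonal argument combined with Arzel\`a--Ascoli on compacts and the uniform $\|\cdot\|_\kappa$-bound, I can extract a subsequence $(n_l)_{l\in\N}$ along which $I(\pi_{n_l}^t)f$ converges in the mixed topology for all $(f,t)\in\D\times\T$; the uniform Lipschitz bound and density of $\D$ (Assumption~\ref{ass:I}(v)) then extend the limit to all $f\in\Ck$, defining $S(t)f$ for $t\in\T$ and, by Assumption~\ref{ass:I}(vi) and Corollary~\ref{lem:cont.app}, making $S(t)$ continuous from above.

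The semigroup property on $\T$ would follow by a telescoping/commuting argument: $I(\pi_n^{s+t})$ and $I(\pi_n^s)I(\pi_n^t)$ differ by at most one factor $I(h_n)$, so using the uniform Lipschitz bound and $\|I(h_n)f-f\|_\kappa\to0$ (valid on $\D$, hence on $\Ck$ by density and equi-Lipschitz continuity) the two limits agree for $s,t\in\T$. Strong continuity at $t=0$, and hence the extension from $\T$ to all of $\R_+$, comes from controlling $\|I(\pi_n^t)f-f\|_\kappa$ for small $t$ on the Lipschitz set $\D\subset\L^I$ and then propagating via the equi-Lipschitz estimate; once $S$ is defined and continuous on all of $\R_+$, properties (ii) and (iii) are immediate by passing to the limit in the corresponding uniform estimates for $I(\pi_{n_l}^t)$ (using lower semicontinuity of $\|\cdot\|_\kappa$ under mixed convergence on bounded sets for (ii), and the quantitative estimate of Assumption~\ref{ass:I}(vi) for (iii)). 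Convexity, monotonicity and $S(t)0=0$ pass trivially to the limit from Assumption~\ref{ass:I}(ii).

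For (i), fix $f\in\Ck$ with $g:=I'(0)f\in\Ck$; then $f\in\L^I$ and $\|I(h)f-f-hg\|_\kappa=o(h)$. Writing $S(t)f-f$ as a telescoping sum of the blocks $S(s)\big(I(\pi_n^{t-s})f\big)-S(s)f$ and comparing, via the equi-Lipschitz bound, with the analogous sum in which each increment $I(h_n)$ is replaced by $h_n g$, one shows $\frac{S(h)f-f}{h}\to g$ in the mixed topology, i.e.\ $f\in D(A)$ and $Af=g$; the semigroup relation $S(s+t)f=S(s)S(t)f$ needed in the definition of $D(A)$ is already available. For (iv), the inclusion $\L^I\subset\L^S$ follows because for $f\in\L^I$ the estimate $\|I(\pi_n^h)f-f\|_\kappa\leq c_n h$ with $c_n$ uniformly bounded (again by the equi-Lipschitz control and superadditivity of $\alpha$) passes to the limit to give $\|S(h)f-f\|_\kappa\leq c h$ for small $h$; together with the semigroup property on $\L^I$-elements this yields $f\in\L^S$, and invariance $S(t)\colon\L^S\to\L^S$ is exactly the analogue of Lemma~\ref{lem:inv} for the (two-sided) Lipschitz set. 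The main obstacle I expect is the bookkeeping in the telescoping estimates for the semigroup property and for part~(i): one must repeatedly invoke the \emph{uniform-in-$n$} Lipschitz and growth bounds to keep the accumulated error under control over $k_n^t\approx t/h_n$ factors, and carefully track how the radius $r$ inflates to $\alpha(r,t)$ along the way — this is where the structural hypotheses on $\alpha$ and $\omega_r$ in Assumption~\ref{ass:I}(iii)--(iv) are essential. Since the paper defers the detailed argument to Appendix~\ref{app:I}, in the main text I would state the construction and the verification of (i)--(iv) at this level of detail and refer to~\cite{BK22+} and Appendix~\ref{app:ext} for the technical core.
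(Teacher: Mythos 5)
Your construction of $(S(t))_{t\geq 0}$ --- uniform bounds from Assumption~\ref{ass:I}(iii)--(iv) together with \cite[Lemma~2.7]{BK22+}, Arzel\`a--Ascoli plus a diagonalization on $\D\times\T$, extension to all of $\Ck$ by density and the local estimate in Assumption~\ref{ass:I}(vi), the telescoping argument for the semigroup property (exploiting $k_n^{s+t}-k_n^s-k_n^t\in\{0,1\}$ and $\D\subset\L^I$), and passing the uniform estimates to the limit for (ii)--(iv) --- is essentially the argument of Appendix~\ref{app:I}. One small slip: the density of $\D$ in Assumption~\ref{ass:I}(v) is with respect to the mixed topology, so you cannot upgrade $\|I(h_n)f-f\|_\kappa\to 0$ from $\D$ to all of $\Ck$ in the $\kappa$-norm; as in the paper, one proves the semigroup identity for $f\in\D$ and then extends to $\Ck$ through Assumption~\ref{ass:I}(vi) and the $\kappa$-Lipschitz bound.

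The genuine gap is in part~(i). Your plan --- telescope and ``replace each increment $I(h_n)$ by $h_n g$, controlled by the equi-Lipschitz bound'' --- does not go through for nonlinear operators. The equi-Lipschitz estimate gives $\|I(h_n)^k I(h_n)f-I(h_n)^k(f+h_n g)\|_\kappa\leq e^{t\omega}\|I(h_n)f-f-h_n g\|_\kappa=o(h_n)$, which is the easy half; it says nothing about the remaining term $I(h_n)^k(f+h_n g)-I(h_n)^k f$, which you need to equal $h_n g+o(h_n)$ uniformly in $k,n$ with $kh_n\leq t_0$. Since $I(h_n)^k$ is merely convex and monotone, this is neither additivity nor a consequence of Lipschitz continuity; it is exactly the auxiliary estimate $\bigl\|\tfrac{I(h_n)^k(f+h_n g)-I(h_n)^k f}{h_n}-g\bigr\|_{\infty,K}\leq\tfrac{\epsilon}{2}$ that the paper establishes separately, using density of $\L^I$, Assumption~\ref{ass:I}(vi) and an argument as in \cite[Lemma~4.4]{BK22+}. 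Likewise, a classical Chernoff comparison through $S$ (your blocks $S(s)\bigl(I(\pi_n^{t-s})f\bigr)-S(s)f$) would require $\|I(h)g-S(h)g\|=o(h)$ uniformly over the iterates $g=I(h_n)^j f$, whereas differentiability is only assumed at the single function $f$. The paper instead proves $\bigl\|\tfrac{I(h_n)^k f-f}{kh_n}-g\bigr\|_{\infty,K}\leq\epsilon$ by induction on $k$, sandwiching the increment via the convexity estimate of Lemma~\ref{lem:lambda} and invoking the auxiliary estimate above; this induction together with that estimate is the idea missing from your sketch, and without it the limit passage defining $Af=I'(0)f$ cannot be justified.
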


The previous theorem provides a general approach for the construction of strongly continuous
convex monotone semigroups. In principle, the limit in equation~\eqref{eq:cher2.IS} could depend on the 
choice of the convergent subsequence and convergence of the whole sequence might fail.\ The latter
is crucial in order to understand the previous result as an approximation scheme and as a possibility to determine the continuous-time limit of 
an appropriately scaled discrete-time dynamics. Furthermore, in view of equation~\eqref{eq:cher2.IS}, 
the semigroup $(S(t))_{t\geq 0}$ should only depend on the infinitesimal behaviour of the family 
$(I(t))_{t\geq 0}$ close to zero. 

Classical existence results for strongly continuous linear semigroups are based on the resolvent of $A$ and the semigroup is defined on $\overline{D(A)}$
which is supposed to coincide with $\Ck$. Here, for the existence result, we do not a priori 
assume that $I'(0)$ and thus $A$ are densely defined but in order to apply the comparison
principle from Section~\ref{sec:comp} it is crucial determine $Af$ for sufficiently many functions. In many applications, the limit $I'(0)f$ can be computed explicitly for sufficiently 
smooth functions. Then, Theorem~\ref{thm:I} guarantees that $Af=I'(0)f$ and that $(S(t))_{t\geq 0}$ is a strongly continuous convex monotone semigroup.

Since the set of smooth functions is not invariant under the 
semigroup and the upper $\Gamma$-generator cannot be computed explicitly, we rely on the 
approximation results in Section~\ref{sec:approx}, which heavily depend on condition~\eqref{eq:trans}. 
In order to characterize functions satisfying this condition, we introduce the so-called approximation 
set $\AS$ which is invariant under the semigroup $(S(t))_{t\geq 0}$. Furthermore, the iterative 
approximation set $\AIiter$ contains functions for which this condition can be verified by means 
the operators $(I(t))_{t\geq 0}$ and transferred to $(S(t))_{t\geq 0}$.
In the sequel, let $X$ be a Banach space and assume that
\begin{equation} \label{eq:kappa2}
 c_\kappa:=\sup_{x\in X}\sup_{y\in B_X(1)}\frac{\kappa(x)}{\kappa(x-y)}\leq 1.
\end{equation}
Recall that the shift operators are given by $(\tau_x f)(y):=f(x+y)$ for all functions 
$f\colon X\to\R$ and $x,y\in X$.

\begin{definition} \label{def:Aset}
 Let $(I(t))_{t\geq 0}$ be a family of operators $I(t)\colon\Ck\to\Ck$. The approximation 
 set $\AI$ consists of all $f\in\Ck$ such that, for every $\epsilon>0$ and $t\geq 0$, 
 there exist $\delta, s_0>0$ with 
 \[ I(s)\tau_x I(t)f\leq\tau_x I(s)I(t)f+\frac{\epsilon s}{\kappa} \]
 for all $s\in [0,s_0]$ and $x\in B_X(\delta)$. Furthermore, the iterative approximation
 set $\AIiter$ consists of all $f\in\Ck$ such that, for every $\epsilon>0$, there exist 
 $c\geq 0$ and $\delta, h_0>0$ with
 \[ I(h)\tau_x I(h)^l f\leq\tau_x I(h)^{l+1}f+\frac{e^{clh}\epsilon h}{\kappa} \]
 for all $h\in [0,h_0]$, $l\in\N_0$ and $x\in B_X(\delta)$. 
\end{definition}

\begin{theorem} \label{thm:approx}
 Suppose that Assumption~\ref{ass:I} is satisfied and denote by $(S(t))_{t\geq 0}$ the 
 corresponding semigroup from Theorem~\ref{thm:I}. In addition, we assume that there exists 
 $\omega\geq 0$ with $\|I(h_n)f\|_\kappa\leq e^{\omega h_n}\|f\|_\kappa$ for all $f\in\Ck$ 
 and $n\in\N$. Then, 
 \[ \AIiter\subset\ASiter\subset\AS \quad\mbox{and}\quad 
    S(t)\colon\AS\to\AS \quad\mbox{for all } t\geq 0. \]
\end{theorem}
\begin{proof}
 Let $f\in\AIiter$ and $\epsilon>0$. Then, there exist $c\geq 0$ and $\delta, h_0\in (0,1]$ with
 \begin{equation} \label{eq:approx1}
  I(h)\tau_x I(h)^l f\leq\tau_x I(h)^{l+1}f+\frac{e^{clh}\epsilon h}{\kappa}
 \end{equation}
 for all $h\in [0,h_0]$, $l\in\N_0$ and $x\in B_X(\delta)$. By induction, we show that
 \begin{equation} \label{eq:approx2}
  I(h)^k\tau_x I(h)^l f\leq\tau_x I(h)^{k+l}f+\frac{e^{(c\vee\omega)(k+l)h}\epsilon kh}{\kappa}
 \end{equation}
 for all $h\in [0,h_0]$, $k,l\in\N_0$ and $x\in B_X(\delta)$. For $k=0,1$, this follows 
 from inequality~\eqref{eq:approx1}. For the induction step, we use Lemma~\ref{lem:lambda} 
 and Corollary~\ref{cor:kappa} to obtain
 \begin{align*}
  &I(h)^{k+1}\tau_x I(h)^l f-\tau_x I(h)^{k+l+1}f \\
  &=\big(I(h)^k I(h)\tau_x I(h)^l f-I(h)^l\tau_x I(h)^{l+1}f\big)
    +\big(I(h)^k\tau_x I(h)^{l+1}f-\tau_x I(h)^{k+l+1}f\big) \\
  &\leq\left(I(h)^k\left(\frac{I(h)\tau_x I(h)^l f-\tau_x I(h)^{l+1}f}{h}+\tau_x I(h)^{l+1}f\right)
    -I(h)^k\tau_x I(h)^{l+1}f\right)h \\
  &\quad\; +e^{(c\vee\omega)(k+l+1)}\epsilon kh \\
  &\leq\left(I(h)^k\left(\tau_x I(h)^{l+1}f+\frac{e^{clh}\epsilon}{\kappa}\right)
    -I(h)^k\tau_x I(h)^{l+1}f\right)h+e^{(c\vee\omega)(k+l+1)}\epsilon kh \\
 &\leq \frac{e^{(c\vee\omega)(k+l+1)}\epsilon (k+1)h}{\kappa}.
 \end{align*}
 Now, let $s,t\in\T$. Assumption~\ref{ass:I}(vi) and equation~\eqref{eq:cher2.IS} guarantee
 \[ I(\pi_n^s)\tau_x I(\pi_n^t)f\to S(s)\tau_x S(t)f \quad\mbox{and}\quad
    \tau_x I(\pi_n^{s+t})f\to\tau_x S(s+t)f=\tau_x S(s)S(t)f \]
 for all $x\in B_X(\delta)$ and therefore inequality~\eqref{eq:approx2} implies
 \begin{equation} \label{eq:approx3}
  S(s)\tau_x S(t)f\leq\tau_x S(s)S(t)f+\frac{e^{(c\vee\omega)(s+t)}\epsilon s}{\kappa}.
 \end{equation}
 This inequality remains valid for arbitrary $s,t\geq 0$ and $x\in B_X(\delta)$ due to 
 the strong continuity of $(S(t))_{t\geq 0}$ and Theorem~\ref{thm:I}(iii). We obtain $f\in\ASiter$
 and similarly one can show that $\ASiter\subset\AS$. 

 Let $f\in\AS$ and $s,t\geq 0$. Then, for every $\epsilon>0$, there exist $\delta, r_0>0$ with 
 \[ S(r)\tau_x S(s)S(t)f=S(r)\tau_x S(s+t)
    \leq\tau_x S(r)S(s+t)f+\frac{\epsilon r}{\kappa}
    =\tau_x S(r)S(s)S(t)+\frac{\epsilon r}{\kappa} \]
 for all $r\in [0,r_0]$ and $x\in B_X(\delta)$. We obtain $S(t)f\in\AS$.
\end{proof}

We now apply the previous result to the particular case that the approximation set contains bounded 
Lipschitz continuous functions.\ This covers all examples presented in Section~\ref{sec:examples}, 
where we can explicitly compute the generator $Af$ for smooth functions.\ 
Note that Assumption~\ref{ass:I2}(ii)
only involves the one-step operators $I(h)$ rather than the iterated operators $I(h)^l$
appearing in Definition~\ref{def:Aset}.\
In the sequel, let $X:=\Rd$ and let $\eta\colon\Rd\to\R_+$ be an infinitely differentiable 
function with $\supp(\eta)\subset B_{\Rd}(1)$ and $\int_{\Rd}\eta(x)\,\d x=1$. For every 
$n\in\N$, $x\in\Rd$ and locally integrable function $f\colon\Rd\to\R$, we define 
$\eta_n(x):=n^d\eta(nx)$ and
\[ (f*\eta_n)(x):=\int_{\Rd} f(x-y)\eta_n(y)\,\d y.\]
Let $\Lipb(r)$ be the space of all $r$-Lipschitz functions $f\colon\Rd\to\R$ with $\|f\|_\infty\leq r$
and define $\Lipb:=\bigcup_{r\geq 0}\Lipb(r)$.
The following Assumption~\ref{ass:I2} is stronger than Assumption~\ref{ass:I} and leads to a refinement of Theorem \ref{thm:I}.

\begin{assumption} \label{ass:I2}
 Suppose that the following conditions are valid:
 \begin{enumerate}
  \item $I(0)=\id_{\Ck}$.
  \item $I(h_n)$ is convex and monotone with $I(h_n)0=0$ for all $n\in\N$.
  \item There exists $\omega\geq 0$ with $\|I(h_n)f\|_\kappa\leq e^{\omega h_n}\|f\|_\kappa$
   for all $f\in\Ck$ and $n\in\N$. 
  \item For every $r\geq 0$, there exists $\omega_r\geq 0$ with
   \[ \|I(h_n)f-I(h_n)g\|_\kappa\leq e^{h_n\omega_r}\|f-g\|_\kappa
   	\quad\mbox{for all } n\in\N \mbox{ and } f,g\in B_{\Ck}(r). \]
   Moreover, the mapping $r\mapsto\omega_r$ is non-decreasing.
  \item For every $\epsilon>0$, there exist $\delta>0$ and $n_0\in\N$ with
   \[ I(h_n)(\tau_x f)\leq\tau_x I(h_n)f+\frac{r\epsilon h_n}{\kappa} \]
   for all $r\geq 0$, $f\in\Lipb(r)$, $x\in B_{\Rd}(\delta)$ and $n\geq n_0$. 
  \item The limit $I'(0)f\in\Ck$ exists for all $f\in\Cbi$. 
  \item For every $\epsilon>0$, $r,T\geq 0$ and $K\Subset X$, there exist
   $K'\Subset X$ and $c\geq 0$ with
   \[ \|I(\pi_n^t)f-I(\pi_n^t)g\|_{\infty, K}\leq c\|f-g\|_{\infty, K'}+\epsilon \]
   for all $t\in [0,T]$, $n\in\N$ and $f,g\in B_{\Ck}(r)$.
  \item There exists $c\geq 0$ with $I(h_n)\colon\Lipb(r)\to\Lipb(e^{ch_n}r)$ for all $r\geq 0$
   and $n\in\N$. 
 \end{enumerate}
\end{assumption}

\begin{remark}
Observe that Assumption~\ref{ass:I2}(iii) implies Assumption~\ref{ass:I}(iii) by choosing $\alpha(r,t) := r e^{\omega t}$. Moreover, Assumption~\ref{ass:I}(v) follows from Assumption~\ref{ass:I2}(vi) in conjunction with Remark~\ref{rem:dense}. In particular, Assumption~\ref{ass:I2} implies Assumption~\ref{ass:I}. Finally, Assumptions~\ref{ass:I2}(iii) and (iv) are guaranteed by the global Lipschitz condition: there exists \(\omega \geq 0\) with $\|I(h_n)f-I(h_n)g\|_\kappa\leq e^{\omega h_n}\|f-g\|_\kappa$ for all $f,g\in \Ck$ and $n\in\N$.
\end{remark}

\begin{theorem} \label{thm:I2}
 Suppose that Assumption~\ref{ass:I2} is satisfied and denote by 
 $(S(t))_{t\geq 0}$ the corresponding semigroup from Theorem~\ref{thm:I}. Then, the following 
 statements are valid:
 \begin{enumerate}
  \item It holds $S(t)\colon\Lipb(r)\to\Lipb(e^{ct}r)$ for all $r,t\geq 0$.
  \item For every $\epsilon>0$, there exists $\delta>0$ with
   \[ S(t)(\tau_x f)\leq\tau_x S(t)f+\frac{e^{(c\vee\omega)t}r\epsilon t}{\kappa} \]
   for all $t\geq 0$, $r\geq 0$, $f\in\Lipb(r)$ and $x\in B_{\Rd}(\delta)$.  
  \item It holds $\Cbi\subset D(A)$ with $Af=I'(0)f$ for all $f\in\Cbi$. 
  \item It holds $\LSplus\cap\Lipb\subset D(\AG)$ and, for every $f\in\LSplus\cap\Lipb$,
   \[ \AG f=\Glim_{n\to\infty}Af_n \quad\mbox{with}\quad f_n:=f*\eta_n. \]
 \end{enumerate}
\end{theorem}
\begin{proof}
 Property~(i) follows from Assumption~\ref{ass:I2}(viii), equation~\eqref{eq:cher2.IS}
 and the strong continuity of $(S(t))_{t\geq 0}$. For every $\epsilon>0$, due to 
 Assumption~\ref{ass:I2}(v) and~(viii), there exist $\delta>0$ and $n_0\in\N$ with
 \[ I(h_n)\tau_x I(h_n)^l f\leq\tau_x I(h_n)^{l+1}f+\frac{e^{clh_n}r\epsilon h_n}{\kappa} \]
 for all $n\geq n_0$, $l\in\N$, $r\geq 0$, $f\in\Lipb(r)$ and $x\in B_{\Rd}(\delta)$. 
 Assumption~\ref{ass:I2}(iii) and inequality~\eqref{eq:approx3} yield that property~(ii) 
 is satisfied. Property~(iii) follows from Assumption~\ref{ass:I2}(vi) and Theorem~\ref{thm:I}(i).
 In order to verify property~(iv), let $f\in\LSplus\cap\Lipb$. It holds $f_n:=f*\eta_n\in\Cbi\subset D(A)$ 
 for all $n\in\N$ and thus Theorem~\ref{thm:conv} implies $f\in D(\AG)$ with $\AG f=\Glim_{n\to\infty}Af_n$.
\end{proof}

As a consequence of Theorem \ref{thm:conv3} and Theorem \ref{thm:I2}, we get the following result.

\begin{theorem} \label{thm:I3}
 Let $(I(t))_{t\geq 0}$ and $(J(t))_{t\geq 0}$ be two families of operators satisfying Assumption~\ref{ass:I2} with
 \begin{equation} \label{eq:thm I3}
  I'(0)f\leq J'(0)f \quad\mbox{for all } f\in\Cbi.
 \end{equation}
 Then, it holds $S(t)f\leq T(t)f$ for all $t\geq 0$ and $f\in\Ck$, where $(S(t))_{t\geq 0}$
 and $(T(t))_{t\geq 0}$ are the semigroups from Theorem~\ref{thm:I} corresponding to $(I(t))_{t\geq 0}$ and $(J(t))_{t\geq 0}$, respectively.
\end{theorem}
\begin{proof}
 Theorem~\ref{thm:I2}(iii) and 
 inequality~\eqref{eq:thm I3} imply 
 \[ Af=I'(0)f\leq J'(0)f=Bf \quad\mbox{for all } f\in\Cbi, \] 
 and therefore the statement follows from Theorem \ref{thm:conv3} together with Theorem~\ref{thm:I2}(ii).
\end{proof}

In particular, under the stronger Assumption~\ref{ass:I2}, the limit in the Chernoff-type approximation \eqref{eq:cher2.IS} is independent of the subsequence.

\begin{corollary} \label{cor:cher}
 Let $(I(t))_{t\geq 0}$ be a family of operators satisfying 
 Assumption~\ref{ass:I2}.\ Then, the limit in equation~\eqref{eq:cher2.IS} does not depend on 
 the choice of the convergent subsequence and therefore the whole sequence converges.
\end{corollary}

\section{Examples}
\label{sec:examples}

In this section, we apply our theoretical results in illustrative examples with focus 
on the comparison principles in Subsection~\ref{sec:comp} and Section~\ref{sec:approx} 
as well as the Chernoff-type approximation in Section~\ref{sec:cher}. For additional 
applications, we refer to the limit theorems in~\cite{BK22b} and the stability results in~\cite{BKN23}.

\subsection{Stochastic control problems}
\label{sec:control}

We consider a controlled SDE of the form
\begin{equation} \label{eq:control.SDE}
 \begin{cases}
  \d X_t^{x,\alpha}=b(X_t^{x,\alpha},\alpha_t)\,\d t+\sigma(\alpha_t)\,\d W_t, \\
  X_0^{x,\alpha}=x, \end{cases}
\end{equation}
where $(W_t)_{t\geq 0}$ is a $d$-dimensional standard Brownian motion on a complete filtered 
probability space $(\Omega,\mathcal{F},(\mathcal{F}_t)_{t\geq 0},\P)$ satisfying the usual 
conditions, $\A\subset\Rq$ is an action set with $q\in\N$ and $\sigma\colon\A\to\S_+^d$ and 
$b\colon\Rd\times\A\to\Rd$ are measurable. Here, the set~$\S^d_+$ consists of all symmetric 
positive semidefinite $d\times d$-matrices.\ We endow $\Rd$ and $\Rq$ with the 
Euclidean norm and $\R^{d\times d}$ with the Frobenius norm. For every $t\geq 0$, $x\in\Rd$ 
and $f\in\Ck$, where $\kappa:=(1+|\cdot|^p)^{-1}$ for some $p\geq 1$, we consider the value function
\[
v(t,x;f):=\sup_{\alpha\in\Aad}\E\left[f(X_t^{x,\alpha})-\int_0^t L(\alpha_s)\,\d s\right], \]
where $\Aad$ consists of all predictable processes $\alpha\colon[0,\infty)\times\Omega\to\A$ with
\[ \E\left[\int_0^t |\alpha_s|\,\d s\right]<\infty \quad\mbox{for all } t\geq 0 \]
and the running cost $L\colon\A\to [0,\infty]$ is measurable. The following conditions
guarantee that equation~\eqref{eq:control.SDE} is well-posed.

\begin{assumption} \label{ass:control} \Newline
 \begin{enumerate}
  \item There exists $C\geq 0$ with $|\sigma(\alpha)|\leq C$ for all $\alpha\in\A$. Furthermore,
   \[ |b(x,\alpha)|\leq C(1+|x|) \quad\mbox{and}\quad |b(x,\alpha)-b(y,\alpha)|\leq C|x-y| \]
   for all $\alpha\in\A$ and $x,y\in\Rd$.
  \item There exists $\alpha^*\in\A$ with $L(\alpha^*)=0$. 
 \end{enumerate}
\end{assumption}

For every $t\geq 0$, $x\in\Rd$ and $f\in\Ck$, we define
\begin{equation}\label{eq:valuefunction}
(S(t)f)(x):=v(t,x;f)=\sup_{\alpha\in\Aad}\E\left[f(X_t^{x,\alpha})-\int_0^t L(\alpha_s)\,\d s\right].
\end{equation}
A standard method in optimal control is to show that the value function $v(\cdot,\cdot;f)$ 
satisfies the dynamic programming principle (DPP) for certain terminal conditions $f$. From this, 
the Hamilton-Jacobi-Bellman (HJB) equation is obtained and linked to the control problem by using 
a verification argument.\ The existence and uniqueness of the solution to the HJB equation typically 
relies on viscosity theory. In Theorem~\ref{thm:control.main} below, we show that the control problem 
can also directly be identified with a strongly continuous convex monotone semigroup $T((t))_{t\ge 0}$.\
Hence, the previously developed semigroup theory is applicable and the associated semigroup is uniquely 
determined by its generator evaluated on $\Cbi$ due to Theorem~\ref{thm:conv3}. In particular, the 
control problem can be described using the Chernoff-type approximations studied in Section~\ref{sec:cher}
which allow for approximations with piecewise constant controls and discretizations of the noise.\
We further note that Chernoff-type approximations lead to numerical schemes for stochastic control
problems whose convergence rates are given in~\cite{BJKL23}.\ In case of a sublinear value function, 
the rates obtained in~\cite{BJKL23} are consistent with the rates which have previously been obtained 
in~\cite{BJ05,BJ07,Krylov98,Krylov99} by relying on monotone schemes for viscosity solutions.
However, in the convex case, the results in~\cite{BJKL23} are apparently new. We now consider the 
following discretized version of the previous control problem.
Let $\xi\colon\Omega\to\Rd$ be a random variable with $\E[\xi]=0$, $\E[\xi\xi^T]=\one_d$ and 
$\E[|\xi|^3]<\infty$. For every $t\geq 0$, $f\in\Ck$ and $x\in\Rd$, we define $\xi_t:=\sqrt{t}\xi$ and
\begin{equation}\label{eq:one-step}
(I(t)f)(x):=\sup_{\alpha\in\A}\big(\E[f(x+b(x,\alpha)t+\sigma(\alpha)\xi_t)]-L(\alpha)t\big). 
\end{equation}
Let $(h_n)_{n\in\N}\subset (0,1]$ with $h_n\to 0$. For every $n\in\N$, $t\geq 0$ and $f\in\Ck$, 
we define 
\[ I(\pi^t_n)f:=I(h_n)^{k_n^t}f
    =\underbrace{\big(I(h_n)\circ\ldots\circ I(h_n)\big)}_{k_n^t \text{ times}}f, \]
where $k_n^t:=\max\{k\in\N_0\colon kh_n\leq t\}$ and $\pi^t_n:=\{0,h_n,\ldots, k_n^t h_n\}$. 
The following result illustrates the convergence of the discretized control problems 
$I(\pi_n^t)f$ to the original control problem $T(t)f$. Moreover, if $\xi$ is normally distributed, 
then $I(\pi^t_n)f$ corresponds to the control problem~\eqref{eq:valuefunction} with piecewise 
constant controls which are obtained by iteratively solving the one-step optimization problems
$f_{k h_n}:=I(h_n)f_{(k+1) h_n}$ of the form \eqref{eq:one-step} with $f_{k^t_n h_n}:=f$. 
By backward recursion, we obtain $I(\pi^t_n)f=f_0$.

\begin{theorem} \label{thm:control.main}
 Suppose that Assumption~\ref{ass:control} is satisfied. Then, $(S(t))_{t\geq 0}$ is a strongly 
 continuous convex monotone semigroup on $\Ck$. It holds $\Cbi\subset D(A)$ and 
 \[ (Af)(x)=\sup_{\alpha\in\A}
    \left(\frac{1}{2}\tr\big(\sigma(\alpha)^2 D^2f(x)\big)+b(x,\alpha)^T Df(x)-L(\alpha)\right) \]
 for all $f\in\Cbi$ and $x\in\Rd$, where $A$ denotes the generator of $(S(t))_{t\geq 0}$. Furthermore,
 \[ S(t)f=\lim_{n\to\infty}I(\pi_n^t)f \quad\mbox{for all } t\geq 0 \mbox{ and } f\in\Ck. \]
\end{theorem}
\begin{proof}
 We show that $(I(t))_{t\geq0}$ and $(S(t))_{t\geq 0}$ satisfy Assumption~\ref{ass:I2} with the 
 same infinitesimal behaviour on $\Cbi$. Then, the statement follows from Theorem~\ref{thm:I3},
 since $(S(t))_{t\geq 0}$ satisfies the dynamic programming principle $S(s+t)f=S(s)S(t)f$ for all 
 $s,t\geq 0$ and $f\in\Lipb$, see, e.g., Fabbri et al.~\cite[Theorem~2.24]{FGS17} or 
 Pham~\cite[Theorem~3.3.1]{Pham09}, which extends to arbitrary $f\in\Ck$ due to Lemma~\ref{lem:uniform}. 
 For every $t\in [0,1]$, $x\in\Rd$ and $\alpha\in\Aad$, Assumption~\ref{ass:control}(i) implies
 \[ 1+\E[|X_t^{x,\alpha}|^p]\leq 3^p\left(1+|x|^p+C2^p\int_0^t 1+\E[|X_s^{x,\alpha}|^p]\,\d s+C_p tC^p\right), \]
 where $C_p\geq0$ is the constant from the Burkholder--Davis--Gundy inequality.
 Hence, by Gronwall's lemma, there exists $c_p\geq0$ with
 \[ 1+\E[|X_t^{x,\alpha}|^p]\leq e^{tc_p}(1+|x|^p).\]
 For every $t\geq 0$, $f\in\Ck$, $x\in\Rd$ and $\alpha\in\Aad$, we obtain
 \[ \E\left[f(X_t^{x,\alpha})-\int_0^t L(\alpha_s)\,\d s\right] \leq\|f\|_\kappa e^{tc_p}(1+|x|^p). \]
 Assumption~\ref{ass:control}(ii) guarantees that $S(t)0=0$ for all $t\geq0$ and thus
 \[ \|S(t)f\|_\kappa\leq e^{tc_p}\|f\|_\kappa \quad\mbox{for all } t\in [0,1] \mbox{ and } f\in \Ck.\]
 Furthermore, the operator $S(t)\colon\Ck\to\Fk$ is convex and monotone. Let 
 $\tilde{\kappa}:=(1+|\cdot|^q)^{-1}$ with $q>p$. For every sequence $(f_n)_{n\in\N}\subset\Ck$ 
 with $f_n\downarrow 0$, Dini's theorem implies
 \[ \|S(t)f_n\|_{\tilde{\kappa}}\leq e^{tc_q}\|f_n\|_{\tilde{\kappa}}\to 0 \quad\mbox{as } n\to\infty \]
 which shows that $S(t)$ is continuous from above. Let $r,t\geq 0$, $f\in\Lipb(r)$ and $x,y\in\Rd$. 
 Assumption~\ref{ass:control}(ii) yields $\|S(t)f\|_\infty\leq\|f\|_\infty$ and
 \[ |X_t^{x,\alpha}-X_t^{y,\alpha}|\leq |x-y|+\int_0^t C|X_s^{x,\alpha}-X_s^{y,\alpha}|\,\d s
    \quad\mbox{for all } \alpha\in \Aad. \]
 Hence, it follows from Gronwall's lemma that
 \[ |(S(t)f)(x)-(S(t)f)(y)|\leq\sup_{\alpha\in\Aad}\E[|f(X_t^{x,\alpha})-f(X_t^{y,\alpha})|]\leq e^{Ct}r|x-y| \]
 showing that $S(t)\colon\Lipb(r)\to\Lipb(e^{Ct}r)$. Since $\Lipb\subset\Ck$ is dense, Lemma~\ref{lem:cont.app}
 implies $S(t)\colon\Ck\to\Ck$ for all $t\geq 0$. In addition, for every $\alpha\in\Aad$,
 \[ |x+X_t^{y,\alpha}-X_t^{x+y,\alpha}|\leq C|x|t+\int_0^t C|x+X_s^{y,\alpha}-X_s^{x+y,\alpha}|\,\d s. \]
 By applying Gronwall's lemma again, we obtain
 \begin{equation} \label{eq:gronwall}
  |x+X_t^{y,\alpha}-X_t^{x+y,\alpha}|\leq Cte^{Ct}
 \end{equation}
 and therefore
 \[ |(S(t)\tau_x f)(y)-(\tau_x S(t)f)(y)|
    \leq\sup_{\alpha\in\Aad}\E[|f(x+X_t^{x,\alpha})-f(X_t^{x+y,\alpha})|] \leq Ce^{Ct}r|x|t. \]
 This shows that Assumption \ref{ass:I2}(viii) is satisfied.
 
 Next, we show that $\Cbi\subset D(A)$ and 
 \[ (Af)(x)=\sup_{\alpha\in\A}
    \left(\frac{1}{2}\tr\big(\sigma(\alpha)^2 D^2f(x)\big)+b(x,\alpha)^T Df(x)-L(\alpha)\right) \]
 for all $f\in\Cbi$ and $x\in\Rd$. Let $f\in\Cbi$. It\^o's formula implies 
 \begin{align*}
  &\E\left[f(X_t^{x,\alpha})-\int_0^t L(\alpha_s)\,\d s\right]-f(x) \\
  &=\E\left[\int_0^t\frac{1}{2}\tr\big(\sigma(\alpha_s)^2 D^2 f(X_s^{x,\alpha})\big)
    +b(X_s^{x,\alpha},\alpha_s)^T Df(X_s^{x,\alpha})-L(\alpha_s)\,\d s\right] \\
 &\leq\E\left[\int_0^t g(X_s^{x,\alpha})\,\d s\right] 
 \end{align*}
 for all $t\geq 0$, $x\in\Rd$ and $\alpha\in\Aad$, where $g\colon\Rd\to\Rd$ is given by 
 \[ g(x):=\sup_{\alpha\A}\Big(\frac{1}{2}\tr\big(\sigma(\alpha)^2 D^2f(x)\big)+b(x,\alpha)^T Df(x)-L(\alpha)\Big)
    \quad\mbox{for all } x\in\Rd. \]
 By Assumption~\ref{ass:control}(i), there exists $r\geq 0$ with 
 \[ |g(x)-g(y)|\leq r|x-y| \quad\mbox{and}\quad |g(x)|\leq r(1+|x|) 
    \quad\mbox{for all } x,y\in\Rd. \] 
 Let $\epsilon>0$. For every $\delta>0$, $t\geq 0$, $x\in\Rd$ and $\alpha\in\Aad$,
 it follows from H\"older's inequality and inequality~\eqref{eq:gronwall} with $y=0$ that 
 \begin{align*}
  \E\left[\int_0^t g(X_s^{x,\alpha})\,\d s\right] 
  &=\E\left[\int_0^t g(X_s^{x,\alpha})\one_{\{|x-X_s^{x,\alpha}|\leq\delta\}}
    +g(X_s^{x,\alpha})\one_{\{|x-X_s^{x,\alpha}|>\delta\}}\,\d s\right] \\
  &\leq (g(x)+r\delta)t+r\E\left[\int_0^t (1+|X_s^{x,\alpha}|)\one_{\{|x-X_s^{x,\alpha}|>\delta\}}\,\d s\right] \\
  &\leq (g(x)+r\delta)t+2r\int_0^t \big(1+\E[|X_s^{x,\alpha}|^p]\big)^\frac{1}{p}\P(|x-X_s^{x,\alpha}|>\delta)^{\frac1q}\,\d s \\
  &\leq (g(x)+r\delta)t+\int_0^t \bigg(\frac{c(1+|x|^p)s^{p}}{\delta^{p}}\bigg)^{\frac1q}
    e^{\frac{sc_p}{p}}\big(1+|x|^p\big)^{\frac1p}\,\d s \\
  &\leq (g(x)+r\delta)t+\bigg(1+\frac{p}q\bigg)^{-1}\frac{c(1+|x|^p)t^{1+\frac{p}q}}{\delta^{p/q}}e^{\frac{tc_p}{p}},
 \end{align*}
 where $1/p+1/q=1$. By choosing $\delta:=\frac{\epsilon}{r}$ and $t_0>0$ sufficiently small, we obtain 
 \[ \E\left[\int_0^t g(X_s^{x,\alpha})\,\d s\right] \leq g(x)+(1+|x|^p)\epsilon t \]
 for all $t\in [0,t_0]$, $x\in\Rd$ and $\alpha\in\Aad$. Taking the supremum over $\alpha\in\Aad$ yields 
 \begin{equation} \label{eq:control.ub}
  \left(\frac{S(t)f-f}{t}\right)(x)-g(x)\leq (1+|x|^p)\epsilon
  \quad\mbox{for all } t\in [0,t_0] \mbox{ and } x\in\Rd. 
 \end{equation}
 In order to prove a lower bound, it is sufficient to take the supremum over controls which 
 are constant in time. For every $\alpha\in\A$ and $x\in\Rd$, we define 
 \[ g_\alpha(x):=\frac{1}{2}\tr\big(\sigma(\alpha)^2 D^2f(x)\big)+b(x,\alpha)^T Df(x)-L(\alpha). \]
 By Assumption~\ref{ass:control}(i), there exists $r\geq 0$ with 
 \[ |g_\alpha(x)-g_\alpha(y)|\leq r|x-y| \quad\mbox{and}\quad |g_\alpha(x)|\leq r(1+|x|) 
    \quad\mbox{for all } x,y\in\Rd \mbox{ and } \alpha\in\A. \] 
 In addition, for every $t\geq 0$, $x\in\Rd$ and $\alpha\in\A$,
 \[ \E[f(X_t^{x,\alpha})-L(\alpha)]-f(x)=\E[g_\alpha(X_t^{x,\alpha})]. \]
 Hence, similar to the previous estimates, there exists $t_1\in (0,t_0]$ with 
 \begin{equation} \label{eq:control.lb}
  \left(\frac{S(t)f-f}{t}\right)(x)-g(x)\geq -(1+|x|^p)\epsilon
  \quad\mbox{for all } t\in [0,t_1] \mbox{ and } x\in\Rd. 
 \end{equation}
 Combining inequality~\eqref{eq:control.ub} and inequality~\eqref{eq:control.lb} yields 
 \[ g=\lim_{h\downarrow 0}\frac{S(h)f-f}{h}. \]

 Performing similar computations for constant controls together with the assumptions $\E[\xi]=0$ 
 and $\E[|\xi|^2]=\tr(\E[\xi\xi^T])=\tr(\one_d)=d$ shows that the family $(I(t))_{t\geq 0}$ satisfies 
 Assumption \ref{ass:I2} as well with $I'(0)f=Af$ for all $f\in\Cbi$. In case that $\xi$ is not
 normally distributed, one has to apply Taylor's formula rather than It\^o's formula.
\end{proof}

Recall that Chernoff-type approximations reduce the original optimization problem to a sequence
of one-step optimization problems $f_{k h_n}:=I(h_n)f_{(k+1) h_n}$ of the form~\eqref{eq:one-step} 
with $f_{k^t_n h_n}:=f$. If the supremum in the one-step optimization problem~\eqref{eq:one-step} 
is attained by a control of the form $\alpha^\ast=g^{t,f}(x)$ for some measurable function $g^{t,f}$, 
then a maximizer of the approximated control problem $I(\pi_n^t)f$ is given by the piecewise constant 
Markovian control $\alpha^\ast_s=g^{h_n,f_{(k+1)h_n}}(X^{x,\alpha^\ast}_{k h_n})$ for all $s\in(k h_n,(k+1)h_n]$. 
As noted before, this procedure yields $\epsilon$-optimal picewise constant controls for the original 
optimization problem.  For the sake of illustration, we consider a simple example where the optimal controls 
are characterized by first-order conditions.

\begin{example}
 We consider a linear quadratic setting with $d=1$. Let $\sigma(\alpha):=\sigma>0$, 
 $b(x,\alpha):=ax+\alpha$ and $L(\alpha):=\frac{\alpha^2}2$ for all $\alpha\in\A:=\R$.
 Then, for every $f\in\Ck$, $x\in\R$ and $h_n\geq 0$, the one-step optimization problem 
 is given by
 \begin{equation} \label{eq:Iex}
  (I(h_n)f)(x):=\sup_{\alpha\in\R}\Big(\E\big[f(x+(ax+\alpha)h_n+\sigma\sqrt{h_n}\xi)\big]-\frac{\alpha^2}{2}h_n\Big),
\end{equation}
 where $\xi\sim\n(0,1)$. It follows from the proof of Theorem~\ref{thm:control.main} that 
 $I(h_n)\colon\Lipb\to\Lipb$ for all $h_n\geq 0$. Let $f\in\Lipb$ and $h_n>0$. Since 
 \[ \lim_{|\alpha|\to\infty}\sup_{x\in\R}\Big(\E\big[f(x+(ax+\alpha)h_n+\sigma\sqrt{h_n}\xi)\big]
    -\frac{\alpha^2}{2}h_n\Big)=-\infty\] 
 and the function $\alpha\mapsto \mathbb{E}[f(x+(ax+\alpha)h_n+\sigma \sqrt{h_n}\xi]$ is continuous, 
 there exists a maximizer $\alpha^\ast(h_n,x)$ of the supremum in equation~\eqref{eq:Iex} which 
 is bounded and measurable in $x$. Furthermore, the weak derivative $f'\in L^\infty$ exist 
 and the first-order condition 
 \begin{equation} \label{eq:foc}
  \alpha^\ast(h_n,x)=\E\big[f'\big(x+(ax+\alpha^\ast(h_n,x))h_n+\sigma\sqrt{h_n}\xi\big)\big]
 \end{equation}
 is satisfied. It follows from the boundedness of $f$ that
 \[ \E\big[f\big(x+(ax+\alpha^\ast(h_n,x))h_n+\sigma\sqrt{h_n}\xi\big)\big]-\frac{\alpha^\ast(h_n,x)^2}2 h_n
    \geq\E\big[f(x+axt+\sigma\xi_t)\big] \]
 and therefore $h_n\alpha^\ast(h_n,x)^2\leq 4\|f\|_\infty$. We obtain $h_n |\alpha^\ast(h_n,x)|\to 0$ as 
 $h_n\downarrow 0$. If $f'$ is even continuous, we can further conclude that
 \[ \alpha^\ast(h_n,x=\E\big[f'\big(x+(ax+\alpha^\ast(h_n))h_n+\sigma\sqrt{h_n}\xi\big)\big]\to f'(x)
    \quad\mbox{as } h_n\downarrow 0, \]
 i.e., the approximately optimal strategy $\alpha^\ast(h_n,x)$ in the state $x$ converges to $f'(x)$ 
 when the time step $h_n$ tends to zero.
\end{example}

We conclude this section by showing that the symmetric Lipschitz set can be computed explicitly
which provides a regularity result for a possibly degenerate fully nonlinear PDE. Similar results
have previously been obtained in~\cite{BK22,BK23}. 
Let $L^\infty$ be the space of all bounded measurable functions $f\colon\Rd\to\R$ and denote 
by $W^{1,\infty}$ the corresponding first order Sobolev space. For $f\in W^{1,\infty}$ and 
$\sigma\in\S^d_+$, we say that $\tr(\sigma^2 D^2f)$ exists as a regular distribution if and only if 
there exists a locally integrable function $g\colon\Rd\to\R$ with
\[ \int_{\Rd}g\phi\,\d x=-\int_{\Rd}(\sigma Df)^T\sigma D\phi\,\d x 
    \quad\mbox{for all } \phi\in\Cci. \]
In this case, we define $\tr(\sigma^2 D^2f):=g$. Since $\tr(\sigma D^2f)=\Delta f$ for $\sigma=\one_d$,
we subsequently also write $\Delta_\sigma f:=\tr(\sigma^2 D^2f)$ and $f\in D(\Delta_\sigma)$.

\begin{theorem} \label{thm:control.LSsym} 
 Suppose that Assumption~\ref{ass:control} is satisfied and that $\sup_{\alpha\in\A}L(\alpha)<\infty$.
 Furthermore, we assume that, for every $\alpha\in\A$, there exists $\bar{\alpha}\in\A$ with 
 $\sigma(\alpha)=\sigma(\bar{\alpha})$ and $b(x,\alpha)=-b(x,\bar{\alpha})$ for all $x\in\Rd$.
 Then, 
 \[ \LSsym\cap\Lipb=\left\{f\in\bigcap_{\alpha\in\A}D(\Delta_{\sigma(\alpha)})\cap W^{1,\infty}
  \colon\sup_{\alpha\in\A}\|\Delta_{\sigma(\alpha)}f\|_\kappa<\infty\right\}. \]
 In addition, if $b(x,\alpha)=b(y,\alpha)$ for all $x,y\in\Rd$ and $\alpha\in\A$, then  
 \[ S(t)\colon\LSsym\cap\Lipb\to\LSsym\cap\Lipb \quad\mbox{for all } t\geq 0. \]
\end{theorem}
\begin{proof}
 Since the semigroup $(S(t))_{t\geq 0}$ does not depend on the particular choice of $\xi$,
 we can choose $\xi\sim\mathcal{N}(0,\one_d)$. 
 It follows from Theorem~\ref{thm:I}(iv) and the inequality 
 \[ -S(t)(-f)\leq -I(t)(-f)\leq I(t)f\leq S(t)f \] 
 for all $t\geq 0$ and $f\in\Ck$ that $\LIsym=\LSsym$. Let $r\geq 0$ and $f\in\LIsym\cap\Lipb(r)$.
 Choose $c\geq 0$ and $t_0>0$ with 
 \[ \|I(t)(-f)+f\|_\kappa\leq ct \quad\mbox{and}\quad \|I(t)f-f\|_\kappa\leq ct 
    \quad\mbox{for all } t\in [0,t_0]. \]
 Then, for every $t\in [0,t_0]$, $\alpha\in\A$ and $x\in\Rd$, we obtain
 \[ \big|\E[f(x+\sigma(\alpha)\xi_t+b(x,\alpha)t)]-f(x)\big|\kappa(x)\leq (c+L(\alpha))t. \]
 Let $\eta\colon\Rd\to\R_+$ be an infinitely differentiable function satisfying 
 $\supp(\eta)\subset B_{\Rd}(1)$ and $\int_{\Rd}\eta(x)\,\d x=1$. For every $n\in\N$
 and $x\in\Rd$, we define $\eta_n(x):=n^d\eta(nx)$ and
 \[ (f*\eta_n)(x):=\int_{\Rd}f(x-y)\eta_n(y)\,\d y.\]
 For every $t\in [0,t_0]$, $\alpha\in\A$, $x\in\Rd$ and $n\in\N$, we use 
 Assumption~\ref{ass:control}(i) to estimate 
 \begin{align*}
  &\big|\E[f_n(x+\sigma(\alpha)\xi_t+b(x,\alpha)t)]-f_n(x)\big| \\
  &\leq\int_{\Rd}\big|\E\big[f(x-y+\sigma(\alpha)\xi_t+b(x,\alpha)t)
    -f(x-y+\sigma(\alpha)\xi_t+b(x-y,\alpha)t)\big]\big|\eta_n(y)\,\d y \\
  &\quad\; +\int_{\Rd}\big|\E[f(x-y+\sigma(\alpha)\xi_t+b(x-y,\alpha)t)]-f(x-y)\big|
    \frac{\kappa(x-y)}{\kappa(x-y)}\eta_n(y)\,\d y \\
  &\leq rCt\int_{\Rd}|y|\eta_n(y)\,\d y+(c+L(\alpha))t\int_{\Rd}(1+|x-y|^p)\eta_n(y)\,\d y.
 \end{align*}
 Hence, it follows from $\supp(\eta_n)\subset B_{\Rd}(1)$ that
 \begin{equation}
  \big|\E[f_n(x+\sigma(\alpha)\xi_t+b(x,\alpha)t)]-f_n(x)\big|\kappa(x)
  \leq\big(Cr+c_\kappa(c+L(\alpha))\big)t
 \end{equation}
 for all $t\in [0,t_0]$, $\alpha\in\A$, $x\in\Rd$ and $n\in\N$, where 
 \[ c_\kappa:=\sup_{x\in\Rd}\sup_{|y|\leq 1}\frac{\kappa(x)}{\kappa(x-y)}\leq 2^p. \]
It follows from It\^o's formula that 
 \[ \left|\frac{1}{2}\tr(\sigma(\alpha)^2 D^2 f_n(x))+b(x,\alpha)^T Df_n(x)\right|\kappa(x)
    \leq Cr+c_\kappa (c+L(\alpha)) \]
 for all $\alpha\in\A$, $x\in\Rd$ and $n\in\N$. Let $\alpha\in\A$.
 By assumption, there exists $\bar{\alpha}\in\A$ with $\sigma(\alpha)=\sigma(\bar{\alpha})$ and 
 $b(x,\alpha)=-b(x,\bar{\alpha})$ for all $x\in\Rd$. Then, 
 \begin{align*}
  |\tr(\sigma(\alpha)^2 D^2 f_n(x))|\kappa(x)
  &\leq\left|\frac{1}{2}\tr(\sigma(\alpha)^2 D^2 f_n(x))+b(x,\alpha)^T Df_n(x)\right|\kappa(x) \\
  &\quad\; +\left|\frac{1}{2}\tr(\sigma(\bar{\alpha})^2 D^2 f_n(x))+b(x,\bar{\alpha})^T Df_n(x)\right|\kappa(x) \\
  &\leq 2(Cr+c_\kappa (c+L(\alpha))
 \end{align*}
 for all $n\in\N$ and $x\in\Rd$. By Banach-Alaoglu's theorem, there exists $g\in L^\infty$
 with $\tr(\sigma(\alpha)^2 D^2f_n)\kappa\to g$ in the weak*-topology. Furthermore, 
 it follows from $f\in\Lipb$ that 
 \[ \sigma(\alpha)Df_n=\big(\sigma(\alpha)Df)*\eta_n\to\sigma(\alpha)Df. \]
 In particular, for every $\phi\in\Cci$, it holds $\frac{\phi}{\kappa}\in\Cci$ and thus
 \[ \int_{\Rd}\tr(\sigma(\alpha)^2 D^2f_n)\phi\,\d x
    =\int_{\Rd}\big(\tr(\sigma(\alpha)^2 D^2f_n)\kappa\big)\tfrac{\phi}{\kappa}\,\d x 
    \to\int_{\Rd}\frac{g}{\kappa}\phi\,\d x. \]
 This shows that $\tr(\sigma(\alpha)D^2 f)$ exists as a regular distribution and
 \[ \|\tr(\sigma(\alpha)D^2 f)\|_\kappa\leq 2(Cr+c_\kappa (c+L(\alpha)). \]
 Taking the supremum over $\alpha\in\A$, we obtain
 \[ f\in\bigcap_{\alpha\in\A}D(\Delta_{\sigma(\alpha)})\cap W^{1,\infty} \quad\mbox{and}\quad 
    \sup_{\alpha\in\A}\|\Delta_{\sigma(\alpha)}f\|_\kappa<\infty. \]
    
 Now, let $f\in\bigcap_{\alpha\in\A}D(\Delta_{\sigma(\alpha)})\cap W^{1,\infty}$ with 
 $\sup_{\alpha\in\A}\|\Delta_{\sigma(\alpha)}f\|_\kappa<\infty$.
 Define $f_n:=f*\eta_n$ for all $n\in\N$. Since $\xi_t\sim W_t$, it follows from It\^o's 
 formula and Assumption~\ref{ass:control}(i) that there exists $c\geq 0$ with 
 \begin{align*}
  &\big|\E[f_n(x+\sigma(\alpha)\xi_t+b(x,\alpha)t)]-f_n(x)\big| \\
  &=\big|\E[f_n(x+\sigma(\alpha)W_t+b(x,\alpha)t)]-f_n(x)\big| \\
  &=\left|\E\left[\int_0^t\Big(\frac{1}{2}\tr(\sigma(\alpha)^2 D^2f_n)+b(x,\alpha)^T Df_n\Big)
    (x+\sigma(\alpha)W_s+b(x,\alpha)s)\,\d s\right]\right| \\
  &\leq\left\|\frac{1}{2}\tr(\sigma(\alpha)^2 D^2f_n)+b(\cdot,\alpha)Df_n\right\|_\kappa
    \E\left[\int_0^t 1+|x+\sigma(\alpha)W_s+b(x,\alpha)s|^p\,\d s\right] \\
  &\leq ct\left\|\frac{1}{2}\tr(\sigma(\alpha)^2 D^2f_n)+b(\cdot,\alpha)Df_n\right\|_\kappa
 \end{align*}
 for all $t\in [0,1]$, $\alpha\in\A$, $x\in\Rd$ and $n\in\N$. Moreover, for every $\alpha\in\A$ and $n\in\N$, 
 \[ \tr(\sigma(\alpha)^2 D^2f_n)=\tr(\sigma(\alpha)^2 D^2f)*\eta_n
    \quad\mbox{and}\quad Df_n=Df*\eta_n. \] 
 For every $t\in [0,1]$ and $n\in\N$, it follows that
 \[ \|I(t)f_n-f_n\|_\kappa
    \leq \sup_{\alpha\in\A}\big(\|\Delta_{\sigma(\alpha)}f\|_\kappa+\|b(\cdot,\alpha)Df\|_\kappa\big)cc_\kappa t, \]
 where Assumption~\ref{ass:control}(i) guarantees that supremum is finite. Since $I(t)$ is continuous w.r.t. 
 the mixed topology, we obtain $f\in\LI\cap\Lipb$. Applying the previous arguments with $-f$ implies 
 $f\in\LIsym\cap\Lipb$. 
 
 The invariance of $\LSsym\cap\Lipb$ follows from the results in~\cite[Section~5]{BK23}. 
 Indeed, while the results in~\cite{BK23} are stated under slightly different conditions,
 a close inspection of the proofs reveals that it is sufficient to show $I(t)(-I(t)(-f))\leq -I(t)(-I(t)f)$ 
 for all $t\geq 0$ and $f\in\Ck$. In the present setting, this follows immediately from
 Fubini's theorem since the drifts do not depend on the current state.
\end{proof}

In the previous theorem, we do not assume that the matrices $\sigma(\alpha)$ are positive definite 
which is a common assumption for parabolic PDEs. Moreover, in the completely degenerate case 
$\sigma(\alpha)=0$ for all $\alpha\in\A$, it holds 
$\LSsym\cap\Lipb=\Lipb$ and $(S(t))_{t\geq 0}$ is a shift semigroup corresponding to a first 
order PDE. On the other hand, if $\sigma(\alpha)$ is positive definite for some $\alpha\in\A$, 
one can argue similar to the proof of~\cite[Theorem~3.1.7]{Lunardi95} to obtain
\[ \LSsym\cap\Lipb\subset D(\Delta)
	=\bigcap_{p\geq 1} \{f\in W^{2,p}_{\loc}\cap\Ck\colon (\Delta f)\kappa\in L^\infty\} \]
and Theorem~\ref{thm:dist} can be applied. 
Under an additional condition on the controls, one can further show that $\LSsym\subset\Lipb$, 
see~\cite[Theorem~6.3]{BK23}.\ In general, an explicit description of $\LSsym$ seems to be 
unknown even in the linear case $A=\Delta$, where $\LSsym$ coincides with the previously 
mentioned Favard space.

\subsection{Trace class Wiener processes with drift}
\label{sec:infdim}

Let $X$ be a real separable Hilbert space with	inner product $\langle\,\cdot,\cdot\,\rangle$ 
and norm $|\cdot|$. We denote by $\s_1(X)$ the space of all trace class operators endowed with 
the trace class norm $\|\cdot\|_{\s_1(X)}$. Throughout this subsection, we make the following 
assumptions.

\begin{assumption} \label{ass:Q}
 Let $B\times\Q\subset X\times\s_1(X)$ satisfy the following conditions:
 \begin{enumerate}
  \item $Q$ is selfadjoint and positive semidefinite for all $Q\in\Q$.
  \item $Q_1Q_2= Q_2Q_1$ for all $Q_1,Q_2\in\Q$.
  \item $B\times\Q$ is bounded. 
 \end{enumerate}
\end{assumption}

By the previous assumption and~\cite[Corollary~3.2.5]{Zimmer90}, there exists a joint orthonormal 
basis of eigenvectors $(e_k)_{k\in\N}\subset X$, i.e., for every $Q\in\Q$ and $k\in\N$, there exists 
$\mu_{Q,k}\geq 0$ with $Qe_k = \mu_{Q,k} e_k$. For every $Q\in\Q$ and $x\in X$,
\[ Qx = \sum_{k\in\N}\mu_{Q,k} \,\langle x,e_k\rangle e_k. \]
Hence, every $Q\in\Q$ can be identified with the sequence $\mu_Q=(\mu_{Q,k})_{k\in\N}\in\ell^1$ 
satisfying $\tr(Q)=\|Q\|_{\s_1(X)}=\|\mu_Q\|_{\ell^1}$. Assumption~\ref{ass:Q}(iii) implies that
$\Lambda:=B\times\mu(\Q)\subset X\times\ell^1$ is bounded, where $\mu(\Q):=\{\mu_Q\colon Q\in\Q\}$.
Let $(\xi^k)_{k\in\N}$ be a sequence of independent one-dimensional Brownian motions. 
For every $\lambda:=(b,\mu)\in\Lambda$, we define by
\[ (S_\lambda(t)f)(x)
	:=\E\left[f\left(x+tb+\sum_{k\in\N}\sqrt{\mu_k}\xi^k_t e_k\right)\right] \]
for all $t\geq 0$, $f\in\Cb$ and $x\in X$ the transition semigroup associated to the 
$Q$-Wiener process with drift $b$ and $\mu=\mu_Q$. For every $t\geq 0$ and $f\in\Cb$, let
\[ I(t)f:=\sup_{\lambda\in\Lambda}S_\lambda(t)f \]
Moreover, for every $t\geq 0$, $f\in\Cb$ and $x\in X$, we define
\[ (T(t)f)(x):=\sup_{(b,\mu)\in\A}
	\E\left[f\left(x+\int_0^t b_s\,\d s
	+\sum_{k\in\N}\Big(\int_0^t\sqrt{\mu_s^k}\,\d\xi^k_s\Big)e_k\right)\right], \]
where $\A$ consists of all predictable processes 
\[ (b,\mu)\colon\Omega\times [0,\infty)\to\Lambda,\; 
	(\omega,t)\mapsto (b_t(\omega), (\mu^k_t(\omega))_{k\in\N}). \]
Let $(h_n)_{n\in\N}\subset (0,\infty)$ be a sequence with $h_n\to 0$ and $\T_n\subset\T_{n+1}$ 
for all $n\in\N$, where $\T_n:=\{kh_n\colon k\in\N_0\}$. 
One can show that the sequence $(I(\pi_n^t)f)_{n\in\N}$ is non-decreasing for all $t\geq 0$ and $f\in\Cb$.

\begin{theorem} 
 It holds 
 \[ T(t)f=\lim_{n\to\infty}I(\pi_n^t)f=\sup_{n\in\N}I(\pi_n^t)f \]
 for all $t\geq 0$ and $f\in\Cb$ such that there exist a compact linear operator 
 $K\colon X\to X$ and a function $g\in\Cb$ with $f(x)=g(Kx)$ for all $x\in X$.
\end{theorem}
\begin{proof}
 Since the sequence $(I(\pi_n^t)f)_{n\in\N}$ is non-decreasing for all $t\geq 0$ and 
 $f\in\Cb$, there exists a family $(S(t))_{t\geq 0}$ of convex monotone operators on 
 $\Cb$ with $I(\pi_n^t)f\uparrow S(t)f$ for all $t\geq 0$ and $f\in\Cb$. For more details,
 we refer to~\cite[Section~2 and Example~7.2]{GNR22}. Dini's theorem implies 
 $I(\pi_n^t)f\to S(t)f$ for all $t\geq 0$ and $f\in\Cb$. By definition, the inequality 
 $S(t)f\leq T(t)f$ holds for all $t\geq 0$ and $f\in\Cb$. 
 
 First, we show that $S(t)f=T(t)f$ for all $t\geq 0$ and $f\in\Cb$ depending only on 
 finitely many coordinates, i.e., there exists $n\in\N$ such that 
 \[ f(x)=f\left(\sum_{k=1}^n \langle x,e_k\rangle e_k\right)
	\quad\mbox{for all } x\in X. \]
 Let $n\in\N$ and $X_n:=\Span\{e_1,\ldots, e_n\}\subset X$. For every $t\geq 0$, $f\in\Cb(X_n)$
 and $x\in X_n$, we define
 \begin{align*}
  (T_n(t)f)(x) 
:=\sup_{(b,\mu)\in\A_n}\E\Bigg[f\left(x+\int_0^t b_s\,\d s
  +\sum_{k=1}^n \Big(\int_0^t \sqrt{\mu_s^k}\,\d\xi^k_s\Big) e_k\right)\Bigg],
 \end{align*}
 where $\A_n$ denotes the set of all predictable processes
 $(b,\mu)\colon\Omega\times [0,\infty)\to\Lambda_n$ with
 \[ \Lambda_n:=\Big\{\big((\langle b,e_k\rangle)_{k=1,\dots,n},(\mu_k)_{k=1,\dots,n}\big)\colon (b,\mu)\in\Lambda\Big\}
 	\subset\R^n\times\R^n_+. \]
 In addition, for every $t\geq 0$, $f_n\in\Cb(X_n)$ and $x\in X_n$, we define
 \begin{align*}
  (S_{\lambda,n}(t)f)(x)
  &:=\E\left[f\left(x+tb+\sum_{k=1}^n \sqrt{\mu_k}\xi^k_t e_k\right)\right]
    \quad\mbox{for all } \lambda:=(b,\mu)\in\Lambda_n, \\
  (I_n(t)f)(x) &:=\sup_{\lambda\in\Lambda_n}(S_{\lambda,n}(t)f)(x).
 \end{align*}
 By Theorem \ref{thm:control.main}, the family $(S_n(t))_{t\geq 0}$ on $\Cb(X_n)$ given by
 \[ S_n(t)f:=\lim_{k\to\infty}I_n(\pi_k^t)f \quad\mbox{for all } (f,t)\in\Cb(X_n)\times\R_+, \]
 is a strongly continuous convex monotone semigroup with $S_n(t)f=T_n(t)f$ for all $t\geq 0$ 
 and $f\in\Cb(X_n)$. Let $f\in\Cb$ such that
 \[ f(x)=f\left(\sum_{k=1}^n \langle x,e_k\rangle e_k\right) \quad\mbox{for all } x\in X \]
 and define $\tilde f:=f|_{X_n}\in \Cb(X_n)$. From the construction of $(S(t))_{t\geq 0}$
 and $(S_n(t))_{t\geq 0}$ and the definition of $(T(t))_{t\geq 0}$ and $(T_n(t))_{t\geq 0}$,
 we obtain $S(t)f=S_n(t)\tilde{f}=T_n(t)\tilde{f}=T(t)f$ for all $t\geq 0$.
 
 Second, we show that $S(t)f=T(t)f$ for all $t\geq 0$ and all $f\in\Cb$ such that there exist 
 a compact linear operator $K\colon X\to X$ and a function $g\in\Cb$ with $f(x)=g(Kx)$ for all 
 $x\in X$. Since the finite rank operators are dense in the space of all compact linear operators 
 w.r.t. the operator norm $\|\cdot\|_{L(X)}$, there exists a sequence $(K_n)_{n\in\N}$ of finite 
 rank operators with $\|K-K_n\|_{L(X)}\to 0$. Let $f_n(x) := g(K_n x)$ for all $x\in X$ and $n\in\N$. 
 Then, it holds $\|f_n\|_\infty\leq\|g\|_\infty$ for all $n\in\N$. Moreover, since $K$ is compact, 
 for every $\epsilon >0$ and $r\geq0$, there exists $\delta>0$ with
 \[ |f(z)-f(Kx)|<\epsilon \]
 for all $x\in B_X(r)$ and $z\in X$ with $\|Kx-z\|<\delta$. Since $\sup_{x\in B_X(r)}\|Kx-K_nx\|\to 0$,
 it follows that
 \[ \lim_{n\to\infty}\sup_{x\in B_X(r)}|f(x)-f_n(x)|=0 \quad\mbox{for all } r\geq 0.\]
 Moreover, the boundedness of $\Lambda$ yields
 \[ c:=\sup_{(b,\mu)\in\A}\E\left[\left|\int_0^t b_s\,\d s
    +\sum_{k=1}^\infty \Big(\int_0^t \sqrt{\mu_s^k}\,\d\xi^k_s\Big) e_k\right|\right]<\infty. \]
 Hence, for every $r>0$ and $x\in X$, we use Chebyshev's inequality to estimate
 \begin{align*}
  \big(T(t)|f-f_n|\big)(x) &\leq\sup_{y\in B_X(r)}|f(y)-f_n(y)|
    +2\|g\|_\infty\sup_{(b,\mu)\in\A}\P\left(\left|X_t^{x,b,\mu}\right|>r\right) \\
  &\leq\sup_{y\in B_X(r)}|f(y)-f_n(y)|+\frac{2\|g\|_\infty}{r}\sup_{(b,\mu)\in\A} 
    \E\left[\left| X_t^{x,b,\mu}\right|\right] \\
  &\leq\sup_{y\in B_X(r)}|f(y)-f_n(y)|+\frac{2\|g\|_\infty(|x|+c)}{r},
 \end{align*}
 where 
 $X_t^{x,b,\mu}:=x+\int_0^t b_s\,\d s+\sum_{k=1}^\infty\Big(\int_0^t \sqrt{\mu_s^k}\,\d\xi^k_s\Big)e_k$ 
 for all $(b,\mu)\in \A$.
 By choosing first $r>0$ and then $n\in\N$ sufficiently large, we obtain
 \begin{align*}
  &\max\{|S(t)f-S(t)f_n|(x), |T(t)f-T(t)f_n|(x)\} \\
  &\leq\max\{(S(t)|f-f_n|)(x),(T(t)|f-f_n|)(x)\}
  =(T(t)|f-f_n|)(x)\to 0.
 \end{align*}
 Hence, we can use the first part to conclude $S(t)f=T(t)f$.
\end{proof}

Observe that the condition $f(x)=g(Kx)$ for a function $g\in\Cb$ and a bounded linear 
operator $K\colon X\to X$ implies that $f(x_n)\to f(x)$ whenever $Kx_n\to Kx$, which is also
referred to as $K$-continuity, cf.~\cite[Definition 3.4]{FGS17}. If $K\colon X\to X$ is a 
compact linear operator and  $f\colon X\to \R$ is $K$-continuous, then $f$ is weakly sequentially 
continuous. On the other hand, if $f\colon X\to \R$ is weakly sequentially continuous, 
then $f$ is $K$-continuous for every injective compact linear operator $K\colon X\to X$, 
see~\cite[Lemma 3.6]{FGS17}.

\subsection{Wasserstein perturbation of linear transition semigroups} 
\label{sec:wasserstein}

Let $p\in (1,\infty)$ and denote by $\p_p$ the set of all probability measures on the 
Borel-$\sigma$-algebra $\B(\Rd)$ with finite $p$-th moment. Let $(\mu_t)_{t\geq 0}\subset\p_p$ 
and $(\psi_t)_{t\geq 0}$ be a family of functions $\psi_t\colon\Rd\to\Rd$. Following the setting 
in~\cite{FKN23, BEK21}, for every $t\geq 0$, $f\in\Cb$ and $x\in\Rd$, we define a reference 
semigroup by
\[ (R(t)f)(x):=\int_{\Rd} f(\psi_t(x)+y)\,\d\mu_t(y) \]
and work under the following assumption.

\begin{assumption} \label{ass:R}
 Suppose that $(R(t))_{t\geq 0}$ is a semigroup. Furthermore, let 
 $(\mu_t)_{t\geq 0}$ and $(\psi_t)_{t\geq 0}$ satisfy the following conditions:
 \begin{enumerate}
  \item $\lim_{t\downarrow 0} \int_{\Rd} |y|^p\,\d\mu_t(y)=0$.
  \item There exist $r>0$ and $c\geq 0$ such that $\mu_t(B_{\Rd}(r)^c)\leq ct$ for all $t\in[0,1]$.	
  \item $\psi_t(0)=0$ for all $t\geq 0$.
  \item There exists $L\geq 0$ such that, for every $x,y\in\Rd$ and $t\in [0,1]$,
   \[ |\psi_t(x)-\psi_t(y)-(x-y)|\leq Lt|x-y|. \]
  \item For every $f\in\Cbi\cap\L_R$, the limit 
   \[ R'(0) f:=\lim_{h\downarrow 0}\frac{R(h)f-f}{h}\in\Cb \]
  exists. Furthermore, it holds $\Cci\subset\L_R$. 
 \end{enumerate}
\end{assumption}

For every $x,y\in \Rd$ and $t\in[0,1]$, the conditions~(iii) and~(iv) imply 
\begin{equation} \label{eq:psi}
 |\psi_t(x)-x|\leq Lt|x| \quad\mbox{and}\quad |\psi_t(x)-\psi_t(y)|\leq e^{Lt}|x-y|. 
\end{equation}

\begin{remark} \Newline
 \begin{enumerate}
  \item In case that $\psi_t(x):=e^{tA}x$ for a matrix $A\in\R^{d\times d}$, 
   the family $(R(t))_{t\geq0}$ satisfies the semigroup property if $(\mu_t)_{t\geq0}$ 
   is a skew-convolution, see~\cite{MR3385977,MR3324719}. In this case, $(R(t))_{t\geq0}$ 
   is a so-called (generalized) Mehler semigroup, see~\cite{MR1745332}.
  \item For every $t\geq 0$ and $x\in\Rd$, it holds
   \[|\psi_t(x)|\geq |x|-|\psi_t(x)-x|\geq (1-Lt)|x| \]
   and therefore $|\psi_t(x)|\to \infty$ as $|x|\to \infty$ for all $t< \frac1L$. 
   As a consequence, the semigroup $(R(t))_{t\geq 0}$ satisfies the Feller property, i.e.,  
   the space of all continuous functions vanishing at infinity is invariant under $R(t)$ for all $t\geq0$.
  \item One readily verifies that Assumption \ref{ass:R} is satisfied for Koopman semigroups 
  and transition semigroups of L\'evy and Ornstein--Uhlenbeck processes, see~\cite{FKN23} for the details.
 \end{enumerate}
\end{remark}

In the sequel, we consider a perturbation of the linear transition semigroup $(R(t))_{t\geq 0}$, 
where we take the supremum over all transition probabilities which are sufficiently 
close to the reference measure $\mu_t$. 
To that end, we endow $\p_p$ with the $p$-Wasserstein distance
\[ W_{p}(\mu,\nu):=\left(\inf_{\pi \in\Pi(\mu,\nu)}
	\int_{\Rd\times\Rd}|y-z|^p\,\d\pi(y, z)\right)^\frac{1}{p} \]
where $\Pi(\mu,\nu)$ consists of all probability measures on $\B(\Rd\times \Rd)$ 
with first marginal $\mu$ and second marginal $\nu$. Let $\varphi\colon\R_+\to [0,\infty]$ 
be a convex lower semicontinuous function with $\varphi(0)=0$ and $\varphi(v)>0$ 
for some $v>0$. The mapping $[0,\infty)\to [0,\infty],\; v\mapsto\varphi\big(v^{1/p}\big)$
is supposed to be convex which implies
\[ \varphi^*(w):=\sup_{v\geq 0}\big(vw-\varphi(v)\big)<\infty
	\quad\mbox{for all } w\geq 0. \]
For every $t\geq 0$, $f\in\Cb$ and $x\in\Rd$, we define
\[ (I(t)f)(x):=\sup_{\nu\in\p_p}\left(\int_{\Rd} f(\psi_t(x)+z)\, \d\nu(z)
	-\varphi_t\big(\W_p(\mu_t,\nu)\big)\right), \]
where $\varphi_t\colon [0,\infty)\to [0,\infty]$ denotes the rescaled function
\[ \varphi_t(v):=\begin{cases}
	t\varphi\big(\tfrac{v}{t}\big), & t>0,\, v\geq 0, \\
	0, & t=v=0, \\
	+\infty, & t=0, \,v\neq 0.
	\end{cases} \]
In addition to the Wasserstein perturbation, we consider a perturbation 
which is parametrized only by drifts $b\in\Rd$. For every $t\geq 0$, 
$f\in\Cb$ and $x\in\Rd$, we define
\[ (J(t)f)(x):=\sup_{b\in\Rd}
	\left(\int_{\Rd} f(\psi_t(x)+y+b)\, \d\mu_t(y)-\varphi_t(|b|t)\right). \]
We remark that $\W_p(\mu_t,\nu_b)=|b|t$ for all $b\in\Rd$ and $t\geq 0$, where 
\[ \nu_b(A):=\int_{\Rd}\one_A(b+y)\,\d\mu_t(y) 
	\quad\mbox{for all } A\in\B(\Rd). \]
Let $(h_n)_{n\in\N}\subset (0,\infty)$ be a sequence with $h_n\to 0$ and
$\T_n\subset\T_{n+1}$ for all $n\in\N$, where $\T_n:=\{kh_n\colon k\in\N_0\}$.

\begin{theorem} \label{thm:WS}
 There exists a strongly continuous convex monotone semigroup $(S(t))_{t\geq 0}$
 on $\Cb$ given by
 \[ S(t)f=\lim_{n\to\infty}I(\pi_n^t)f=\lim_{n\to \infty}J(\pi_n^t)f
 	\quad\mbox{for all } (f,t)\in\Cb\times\R_+ \]
 such that $\Cbi\cap\L_R\subset D(A)$ and 
 \[ Af=R'(0)f+\varphi^*(|\nabla f|) \quad\mbox{for all } f\in\Cbi\cap\L_R. \]
\end{theorem}
\begin{proof}
 In a first step, we show that both $(I(t))_{t\geq0}$ and $(J(t))_{t\geq0}$ satisfy 
 Assumption~\ref{ass:I} by showing that they satisfy Assumption~\ref{ass:I2}, where 
 Assumption~\ref{ass:I2}(ii) is satisfied with $\Cbi\cap\L_R$ instead of $\Cbi$ and 
 $I'(0)f=J'(0)f=R'(0)f+\phi^*(|\nabla f|)$ for $f\in \Cbi\cap \L_R$. By definition of 
 $(I(t))_{t\geq0}$ and $(J(t))_{t\geq0}$, Assumption~\ref{ass:I2}(i)-(iv) is
 satisfied with $\omega=\omega_r=0$ for all $r\geq0$. It follows from Assumption~\ref{ass:R}(iv)
 that
 \begin{align*}
  (\tau_x(I(t)f))(y)-I(t)(\tau_x f)(y)
  &\leq\sup_{\nu\in\p_p}\int_{\Rd}|f(\psi_t(x+y)+z)-f(\psi_t(y)+x+z)|\,\d\nu(z) \\
  &\leq r|\psi_t(x+y)-\psi_t(y)-x|\leq Lrt|x|
 \end{align*}
 for all $r\geq 0$ and $f\in\Lipb(r)$ showing that Assumption \ref{ass:I2}(v) is valid. 
 Similarly, one can verify Assumption~\ref{ass:I2}(v) for the family $(J(t))_{t\geq0}$.
 Moreover, for all $r,t\geq 0$, $f\in\Lipb(r)$ and $x,y\in\Rd$, inequality~\eqref{eq:psi} implies
 \[  |(I(t)f)(x)-(I(t)f)(y)| 
 	\leq\sup_{\nu\in\p_p}\int_{\Rd}|f(\psi_t(x)+z)-f(\psi_t(y)+z)|\,\d\nu(z)
  	\leq e^{Lt}r|x-y|. \]
 This shows that $(I(t))_{t\geq0}$ and similarly $(J(t))_{t\geq0}$ satisfy Assumption~\ref{ass:I2}(viii).
 Since~\cite[Lemma 3.10]{FKN23} implies $I(s)I(t)f\leq I(s+t)f$ for all $s,t\geq 0$ and $f\in\Cb$,
 the continuity from above of $I(t)$ for all $t\geq0$ guarantees Assumption~\ref{ass:I2}(vii). 
 Hence, by Theorem~\ref{thm:I}, there exists a strongly continuous convex monotone semigroup 
 $(S(t))_{t\geq0}$ on $\Cb$ which is given by 
 \[ S(t)f=\lim_{n\to\infty}I(\pi_n^t)f=\sup_{n\in\N}I(\pi_n^t)f \quad\mbox{for all } (f,t)\in \Cb\times \R_+. \]
 For every $T\geq 0$, $K\Subset\Rd$ and $(f_n)_{n\in\N}\subset\Cb$ with $f_n\downarrow 0$, 
 we use Dini's theorem and the inequality $J(t)f\leq I(t)f$ to obtain
 \[ 0\leq\sup_{(t,x)\in [0,T]\times K}\sup_{n\in\N}\big(J(\pi_n^t)f_k)(x)
 	\leq \sup_{(t,x)\in [0,T]\times K}(S(t)f_k)(x)\downarrow 0 \quad\,\mbox{as } k\to\infty. \]
 Next, we prove that $I'(0)f=J'(0)f=R'(0)f+\phi^*(|\nabla f|)$ for all $f\in\Cbi\cap\L_R$. 
 For every $r,t\geq 0$ and $f\in\Lipb(r)$, it follows from~\cite[Equation~(3.7)]{FKN23} that
 \begin{equation} \label{eq:IJR}
  0\leq J(t)f-R(t)f\leq I(t)f-R(t)f\leq\varphi^*(r)t
 \end{equation}
 and therefore $\Cbi\cap\L_R\subset\L_J$. Let $f\in\Cbi\cap\L_R$. By~\cite[Lemma 3.11]{FKN23},
 it holds
 \[ \frac{J(h)f-f}{h}\leq\frac{I(h)f-f}{h}\to R'(0)f+\varphi^*(|\nabla f|) \quad\mbox{as } h\downarrow 0. \]
 Moreover, for every $h>0$ and $b,x,y\in\Rd$, Taylor's formula implies
 \[ |f(\psi_h(x)+y+bh)-f(\psi_h(x)+y)-\langle\nabla f(\psi_h(x)+y),bh\rangle|
 	\leq\|D^2 f\|_\infty|b|^2h^2. \]
 As seen in the proof of~\cite[Lemma~3.6]{FKN23}, it holds $R(h)g\to g$ as $h\downarrow 0$ 
 for all $g\in\Lipb$ and therefore
 \begin{align*}
  &\frac{J(h)f-f}{h}=\frac{J(h)f-R(h)f}{h}+\frac{R(h)f-f}{h} \\
  &\geq\frac{R(h)f-f}{h}+\int_{\Rd}\langle\nabla f(\psi_h(\,\cdot\,)+y,b\rangle\d\mu_h(y)-\varphi(|b|)
  	-\|D^2 f\|_\infty |b|^2h \\
  &=\frac{R(h)f-f}{h}+R(h)\big(\langle\nabla f,b\rangle\big)-\varphi(|b|)-\|D^2 f\|_\infty |b|^2h \\
  &\to R'(0)f+\langle\nabla f,b\rangle-\varphi(|b|)
 \end{align*}
 as $h\downarrow 0$ for all $b\in\Rd$. Taking the supremum over $b\in\Rd$ yields
 \[ \lim_{h\downarrow 0}\frac{J(h)f-f}{h}=R'(0)f+\varphi^*(|\nabla f|). \]
 By Theorem~\ref{thm:I}, there exist a subsequence $(n_l)_{l\in\N}$ and a strongly continuous 
 convex monotone semigroup $(T(t))_{t\geq 0}$ on $\Cb$ with $T(t)0=0$ given by
 \begin{equation} \label{eq:JTb}
  T(t)f=\lim_{l\to\infty}J(\pi_{n_l}^t)f \quad\mbox{for all } (f,t)\in\Cb\times\T,
 \end{equation}
 where $\T\subset\R_+$ is a countable dense set with $0\in\T$. It holds $\Cbi\cap\L_R\subset D(B)$ 
 and $Bf=R'(0)f+\varphi^*(|\nabla f|)$ for all $f\in\Cbi\cap\L_R$. 
 Furthermore, Theorem~\ref{thm:I2} guarantees that condition~\eqref{eq:trans} 
 is valid for all $f\in\Lipb$ and that $T(t)\colon\Lipb\to\Lipb$ for all $t\geq 0$. 
 
 Second, inequality~\eqref{eq:IJR} implies $0\leq T(t)f-R(t)f\leq\varphi^*(r)t$ for all 
 $r,t\geq 0$ and $f\in\Lipb(r)$ and therefore the set $\D:=\L_R\cap\Lipb=\L_T\cap\Lipb$
 does not depend on the choice of the convergence subsequence in equation~\eqref{eq:JTb}
 and satisfies $T(t)\colon\D\to\D$ for all $t\geq 0$. We show that, for every $f\in\D$, 
 there exists a sequence $(f_n)_{n\in\N}\subset\Cbi\cap\L_R$ with $f_n\to f$ and 
 $\BG f=\Glim_{n\to\infty}Bf_n$. Let $f\in\D$ and $\eta\colon\Rd\to\R_+$ be infinitely 
 differentiable with $\supp(\eta)\subset B_{\Rd}(1)$, and $\int_{\Rd}\eta(x)\,\d x=1$. 
 For every $t\geq 0$, $n\in\N$ and $x\in\Rd$, Fubini's theorem and Assumption~\ref{ass:R}(iv) 
 imply 
 \begin{align*}
  &|R(t)f_n-(R(t)f)*\eta_n|(x) \\
  &\leq\int_{B(1)}\left(\int_{\Rd}|f(\psi_t(x)+y-z)-f(\psi_t(x-z)+y)|\,\d\mu_t(y)\right)\eta_n(z)\,\d z
  \leq Lrt,
 \end{align*}
 where $\eta_n(x):=n^d\eta(nx)$ and $f_n:=f*\eta_n\in\Cbi$. We obtain 
 \[ \|R(t)f_n-f_n\|_\infty\leq\|R(t)f-f\|_\infty+Lrt \]
 and thus $f_n\in\L_R$. 
 Furthermore, inequality~\eqref{eq:IJR} yields
 \[ 0\leq T(t)f-R(t)f\leq S(t)f-R(t)f\leq\varphi^*(r)t \]
 for all $r,t\geq 0$ and $f\in\Lipb(r)$ and therefore
 \[ \D:=\L_R\cap\Lipb=\L_S\cap\Lipb=\L_T\cap\Lipb. \]
 Since $S(t)\colon\D\to\D$ and $T(t)\colon\D\to\D$ for all $t\geq 0$, we can
 use Theorem~\ref{thm:conv} and Theorem~\ref{thm:comp2} to conclude that both
 semigroups coincide. 
 \end{proof}

In the particular case that $(S(t))_{t\geq 0}$ can be represented by the entropic risk measure, 
as a byproduct of Theorem~\ref{thm:WS}, we recover that $\mu_t$ satisfies the Talagrand $T_2$ 
inequality, see~\cite[Chapter~22]{Villani09} and~\cite{Talagrand96}. The proof uses the fact 
that the sequence $(I(\pi_n^t)f)_{n\in\N}$ is non-increasing for all $t\geq 0$ and $f\in\Cb$, 
see~\cite[Lemma 3.10]{FKN23}.

\begin{corollary}
 It holds $\W_2(\nu,\mu_t)\leq\sqrt{2t H(\nu|\mu_t)}$ for all $t\geq 0$ 
 and $\nu\in\P_2$, where $H(\nu|\mu_t)$ denotes the relative entropy of 
 $\nu$ w.r.t. $\mu_t:=\n(0,t\one_d)$. 
\end{corollary}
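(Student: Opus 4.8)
The plan is to specialise Theorem~\ref{thm:WS} to the reference data and penalty for which the perturbed semigroup is the entropic risk measure, and then to read off the inequality by convex duality. I take $\psi_t:=\id$ and $\mu_t:=\n(0,t\one)$, so that $(R(t))_{t\ge0}$ is the heat semigroup with $R'(0)f=\tfrac12\Delta f$, and $\varphi(v):=\tfrac12 v^2$; this satisfies the standing assumptions of Subsection~\ref{sec:wasserstein} (for $p=2$ the map $v\mapsto\varphi(v^{1/2})=\tfrac v2$ is convex and $\varphi^*(w)=\tfrac12 w^2$) and yields $\varphi_t(v)=\tfrac{v^2}{2t}$, so that
\[ (I(t)f)(x)=\sup_{\nu\in\p_2}\Big(\int_{\R^d}f(x+z)\,\d\nu(z)-\tfrac1{2t}\W_2(\mu_t,\nu)^2\Big). \]
Theorem~\ref{thm:WS} then produces a semigroup $(S(t))_{t\ge0}$ with $S(t)f=\inf_{n\in\N}I(\pi_n^t)f$ and generator $Af=\tfrac12\Delta f+\tfrac12|\nabla f|^2$ on $\Cbi\cap\L^R=\Cbi$. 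By the Hopf--Cole change of variables $v=e^u$, the family $(t,f)\mapsto\log\int_{\R^d}e^{f(\cdot+z)}\,\d\mu_t(z)$ is itself a strongly continuous convex monotone semigroup on $\Cb$ with the same generator on $\Cbi$; since, by Remark~\ref{rem:conv}, a semigroup in this class is determined by its generator on $\Cbi$ (equality there forces equality on $\Lipb$, then on $\Cb$ via Lemma~\ref{lem:uniform}), we conclude --- irrespective of the grid used in Theorem~\ref{thm:WS} --- that
\[ (S(t)f)(x)=\log\int_{\R^d}e^{f(x+z)}\,\d\mu_t(z),\qquad f\in\Cb,\ x\in\R^d,\ t\ge0, \]
which is the asserted entropic representation.

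Next I fix $t>0$ and observe that $S(t)f\le I(t)f$ for all $f\in\Cb$: choosing the grid sequence $h_n:=2^{1-n}t$ in Theorem~\ref{thm:WS}, the coarsest partition is $\pi_1^t=\{0,t\}$, so $S(t)f=\inf_n I(\pi_n^t)f\le I(\pi_1^t)f=I(t)f$. Evaluated at $x=0$ and combined with the entropic representation, this gives
\[ \log\int_{\R^d}e^f\,\d\mu_t\;\le\;\sup_{\nu\in\p_2}\Big(\int_{\R^d}f\,\d\nu-\tfrac1{2t}\W_2(\mu_t,\nu)^2\Big)\qquad\text{for every }f\in\Cb. \]

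Now I pass to convex conjugates for the pairing $\langle f,\nu\rangle:=\int f\,\d\nu$ between $\Cb$ and the space $M(\R^d)$ of finite signed Borel measures. Set $\Lambda(f):=\log\int e^f\,\d\mu_t$ and let $F\colon M(\R^d)\to[0,\infty]$ be $F(\nu):=\tfrac1{2t}\W_2(\mu_t,\nu)^2$ for $\nu\in\p_2$ and $F(\nu):=+\infty$ otherwise. The right-hand side of the last display is exactly $F^{*}$, so the inequality reads $\Lambda\le F^{*}$; taking conjugates (which reverses the order) yields $\Lambda^{*}\ge F^{**}$. By the Donsker--Varadhan / Gibbs variational formula, $\Lambda^{*}(\nu)=H(\nu|\mu_t)$ for probability measures $\nu$ and $\Lambda^{*}(\nu)=+\infty$ otherwise. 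The functional $F$ is proper, convex (convexity of $\nu\mapsto\W_2(\mu_t,\nu)^2$), and lower semicontinuous for $\sigma\big(M(\R^d),\Cb\big)$: along any net with $\liminf F(\nu_\alpha)<\infty$ the $\nu_\alpha$ are eventually probability measures of uniformly bounded second moment, hence tight and $\W_2$-precompact, and $\W_2^2$ is weakly lower semicontinuous; so $F^{**}=F$ by Fenchel--Moreau. Therefore $H(\nu|\mu_t)\ge\tfrac1{2t}\W_2(\mu_t,\nu)^2$, i.e.\ $\W_2(\nu,\mu_t)\le\sqrt{2t\,H(\nu|\mu_t)}$, for all $\nu\in\p_2$; the case $t=0$ is trivial.

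The decisive step is the convex-analytic one: recognising the conjugate of the entropic functional as relative entropy, and --- the genuinely technical point --- checking that $\nu\mapsto\tfrac1{2t}\W_2(\mu_t,\nu)^2$, extended by $+\infty$ off $\p_2$, is convex and $\sigma\big(M(\R^d),\Cb\big)$-lower semicontinuous, so that it agrees with its biconjugate. By contrast, identifying the data for which $(S(t))_{t\ge0}$ is the entropic risk measure and establishing $S(t)\le I(t)$ are routine consequences of Subsection~\ref{sec:wasserstein} and the Hopf--Cole transformation.
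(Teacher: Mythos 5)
Your overall route coincides with the paper's: specialise Theorem~\ref{thm:WS} to $\psi_t=\id$, $\mu_t=\n(0,t\one)$, $\varphi(v)=v^2/2$, identify the resulting semigroup with an entropic (log-Laplace) semigroup, and then combine $S(t)f\le I(t)f$ with Fenchel--Moreau duality and the Donsker--Varadhan formula. The duality half of your argument is correct and in fact more explicit than the paper's one-line appeal to Fenchel--Moreau: you check convexity and lower semicontinuity of $\nu\mapsto\tfrac1{2t}\W_2(\mu_t,\nu)^2$ for the pairing with $\Cb$, and your representation $\log\int_{\R^d}e^{f(x+z)}\,\d\mu_t(z)$ is indeed the one whose generator on $\Cbi$ is $\tfrac12(\Delta f+|\nabla f|^2)$ and whose conjugate at $x=0$ is exactly $H(\cdot\,|\,\mu_t)$, so your constants are consistent with the stated inequality. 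The choice of the dyadic grid $h_n=2^{1-n}t$ to read off $S(t)f\le I(t)f$ is also fine.

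The genuine gap is the identification step. You invoke Remark~\ref{rem:conv} as if it were a black-box uniqueness theorem (``a semigroup in this class is determined by its generator on $\Cbi$''). It is not: the remark only says that, under its hypotheses, the $\Gamma$-generator on $\L^S_+\cap\Lipb$ is determined by its values on smooth functions, and that Theorem~\ref{thm:comp2} ``can be improved accordingly''; to actually run Theorem~\ref{thm:comp2} one must still produce a set $\D\subset\L^T\cap\L^S_+$ which is invariant under one of the semigroups and on which the generator comparison holds, i.e.\ one must relate the (upper) Lipschitz set of one semigroup to the Lipschitz set of the other. The paper stresses at the start of Section~\ref{sec:approx} that this is precisely the nontrivial input, and in its proof of this corollary that is where the work lies: one direction, $T(t)f\le\tilde S(t)f$, comes from the one-step estimate $J(t)f\le\tilde S(t)f$ of \cite{BK22}, while the reverse comparison uses It\^o's formula together with the explicit description of $\L^T_{\sym}\cap\Lipb$ from Theorem~\ref{thm:control} to obtain $\L^T_{\sym}\cap\Lipb\subset\L^{\tilde{S}}_+$, and only then are Lemma~\ref{lem:conv2}, Theorem~\ref{thm:comp2} and Theorem~\ref{thm:WT} applied to conclude $\tilde S=T=S$. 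None of this appears in your write-up: equality of generators on $\Cbi$ alone does not, within the framework of the paper, yield $S(t)f=\log\int e^{f(x+z)}\,\d\mu_t(z)$. To close the gap you must either exhibit such an invariant domain inside the relevant upper Lipschitz sets (as the paper does via Theorem~\ref{thm:control}), or replace one direction of the comparison by a direct one-step estimate against $J(t)$ or $I(t)$; until then the entropic representation, and hence the final inequality, is unproven.
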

\begin{proof}
 Choose $\psi_t:=\id_{\Rd}$, $\mu_t:=\n(0,t\one_d)$ and $\varphi(v):=\frac{v^2}{2}$ 
 for all $t,v\geq 0$. Let $(W_t)_{t\geq 0}$ be a $d$-dimensional Brownian motion. 
 We show that 
 \[(S(t)f)(x)=(\tilde{S}(t)f)(x)
 	:=\frac{1}{2}\log\big(\E[\exp(2f(x+W_t))]\big) \]
 for all $t\geq 0$, $f\in\Cb$ and $x\in\Rd$.\ 
 One readily verifies that the family $(\tilde{S}(t))_{t\geq 0}$ is a strongly continuous 
 convex monotone semigroup that satisfies inequality~\eqref{eq:trans}. Furthermore, it holds 
 $\tilde S(t)\colon \Lipb\to \Lipb$ for all $t\geq0$ and $\Cbi\subset D(\tilde{A})$ with 
 \[ \tilde{A}f=\tfrac{1}{2}\big(\Delta f+|\nabla f|^2\big)
 	 \quad\mbox{for all } f\in\Cbi. \]
 Theorem \ref{thm:conv3} implies $\tilde S(t)f=S(t)f\leq I(t)f$ for all $t\geq0$ and $f\in \Cb$.
 Hence, it follows from Fenchel--Moreaus's theorem that
 \begin{align*}
  \frac{\W_2(\nu,\mu_t)^2}{2t}
  &=\sup_{f\in\Cb}\left(\int_{\Rd} f\,\d\nu-(I(t)f)(0)\right) \\
  &\leq\sup_{f\in\Cb}\left(\int_{\Rd} f\,\d\nu-(\tilde{S}(t)f)(0)\right)
  =H(\nu|\mu_t). \qedhere
 \end{align*}
\end{proof}

\appendix

\section{$\Gamma$-convergence}
\label{app:gamma}

Following the works of Beer~\cite{Beer78}, Dal Maso~\cite{DalMaso93} and Rockafellar 
and Wets~\cite{RW98}, we gather some basics about $\Gamma$-convergence. 
In~\cite{DalMaso93} and~\cite{RW98}, all results are formulated for extended 
real-valued lower semicontinuous functions but a function $f\colon X\to\Rbar$ is
upper semicontinuous if and only if $-f\colon X\to (-\infty,\infty]$ is lower semicontinuous
and all results immediately transfer to our setting.

\begin{definition}
 For every sequence $(f_n)_{n\in\N}\subset \Uk$, which is bounded above, we define
 \[ \Big(\Glimsup_{n\to\infty}f_n\Big)(x)
 	:=\sup\Big\{\limsup_{n\to\infty}f_n(x_n)\colon
 	(x_n)_{n\in\N}\subset X\mbox{ with } x_n\to x\Big\}\in\Rbar \] 
 for all $x\in X$. 
 Moreover, we say that $f=\Glim_{n\to\infty}f_n$ with $f\in\Uk$ if, for every $x\in X$,
 \begin{itemize}
  \item $f(x)\geq\limsup_{n\to\infty}f_n(x_n)$ for every sequence $(x_n)_{n\in\N}\subset X$ with $x_n\to x$,
  \item $f(x)=\lim_{n\to\infty}f_n(x_n)$ for some sequence $(x_n)_{n\in\N}\subset X$ with $x_n\to x$.
 \end{itemize}
 For every $t\geq 0$ and $(f_s)_{s\geq 0}\subset\Uk$ being bounded above, we define 
 \[ \Glimsup_{s\to t}f_s
 	:=\sup\Big\{\Glimsup_{n\to\infty}f_{s_n}\colon 0\leq s_n\to t\Big\}\in\Uk. \]
 In addition, we write $f=\Glim_{s\to t}f_s$ with $f\in\Uk$ if $f=\Glim_{n\to\infty}f_{s_n}$ 
 for all sequences $(s_n)_{n\in\N}\subset [0,\infty)$ with $s_n\to t$. 
\end{definition}

\begin{lemma} \label{lem:gamma}
 Let $(f_n)_{n\in\N}\subset\Uk$ and $(g_n)_{n\in\N}\subset\Uk$ be bounded above
 and $f,g\in\Uk$.
 \begin{enumerate}
  \item It holds $\Glimsup_{n\to\infty}f_n\in\Uk$. Furthermore, $(f_n)_{n\in\N}\subset\Uk$ 
   has a $\Gamma$-convergent subsequence, i.e., $\Glim_{k\to\infty}f_{n_{k}}\in\Uk$ exists
   for a subsequence $(n_k)_{k\in\N}$.
  \item We have $f=\Glim_{n\to\infty}f_n$ if and only if every subsequence $(n_k)_{k\in\N}$
   has another subsequence $(n_{k_l})_{l\in\N}$ with $f=\Glim_{l\to\infty}f_{n_{k_l}}$.
  \item If $f_n\downarrow f$, then $f=\Glim_{n\to\infty}f_n$.
  \item It holds 
   $\Glimsup_{n\to \infty} (f_n+g_n)\leq\Glimsup_{n\to\infty}f_n+\Glimsup_{n\to\infty}g_n$.
   Moreover, we have $\Glimsup_{n\to\infty}f_n\leq\Glimsup_{n\to\infty}g_n$ if $f_n\leq g_n$
   for all $n\in\N$.
  \item Assume that $f\in \Ck$, $f_n\to f$ uniformly on compacts and $g=\Glim_{n\to\infty}g_n$. 
   Then, it holds $f+g=\Glim_{n\to\infty}(f_n+g_n)$.
  \item If $f=\Glimsup_{n\to\infty}f_n$ and $g\in\Ck$, then
   $f\vee g=\Glimsup_{n\to\infty}(f_n\vee g)$.
  \item It holds 
   $(\Glimsup_{n\to\infty}f_n)(x)\geq\limsup_{n\to\infty}\sup_{y\in B(x,\delta_n)}f_n(y)$
   for all $x\in X$ and $(\delta_n)_{n\in\N}\subset (0,\infty)$ with $\delta_n\to 0$.
 \end{enumerate}
\end{lemma}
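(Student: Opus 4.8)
The plan is to record one reduction and then treat the seven items in two groups. Since $\kappa$ is continuous and strictly positive, for any sequence $(a_n)_{n\in\N}\subset[-\infty,\infty)$ that is bounded above and any $x_n\to x$ one has $\limsup_n a_n\kappa(x_n)=\kappa(x)\limsup_n a_n$; applying this with $a_n=f_n(x_n)$ (which is bounded above because $\kappa(x_n)\to\kappa(x)>0$) gives $\Glimsup_{n\to\infty}(f_n\kappa)=\kappa\,\Glimsup_{n\to\infty}f_n$, and likewise $f=\Glim_n f_n$ in $\Uk$ if and only if $f\kappa=\Glim_n(f_n\kappa)$ in the space of upper semicontinuous functions on $X$ bounded above by a constant. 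This lets me transfer the classical theory of $\Gamma$-convergence (equivalently, epi-convergence of the lower semicontinuous functions $-f_n\kappa$) from \cite{DalMaso93, RW98, Beer78} verbatim, and for the elementary items the factor $\kappa$ plays no role at all.

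For item~(i): upper semicontinuity of $\Glimsup_n f_n$ is proved directly by a diagonal argument — given $x_m\to x$, pick for each $m$ an almost-optimal sequence realizing $(\Glimsup_n f_n)(x_m)$ and thin it so that a single sequence converging to $x$ witnesses a $\limsup$-value $\geq\limsup_m(\Glimsup_n f_n)(x_m)$; moreover $\Glimsup_n f_n\in\Uk$ since $\limsup_n f_n(x_n)\kappa(x_n)\le\sup_n\|f_n^+\|_\kappa$ for every $x_n\to x$. Sequential $\Gamma$-compactness is the classical compactness theorem for $\Gamma$-convergence over a space with countable base ($X$ is separable metric) applied to $(f_n\kappa)_{n\in\N}$, the limit again being usc and bounded above by the same constant, hence in $\Uk$. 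Item~(ii) is the Urysohn property of $\Gamma$-convergence: it is immediate once one knows that $\Gamma$-convergence (after reduction to a fixed bound) is the sequential convergence of the $\Gamma$-topology, cf.\ \cite{DalMaso93}; alternatively, one argues by hand using that passing to a subsequence does not increase $\Glimsup$ nor decrease $\Gliminf$, so that if every subsequence has a sub-subsequence $\Gamma$-converging to $f$ then $\Glimsup_n f_n\le f$, then $f(x)=(\Glimsup_n f_n)(x)$ for all $x$ (otherwise recovery would fail for every sub-subsequence), and finally a recovery sequence for the full sequence is extracted.

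For items~(iii)--(vii) I would argue directly from the definition, using only elementary $\limsup$ arithmetic and the recurring device of ``filling the gaps of a subsequence with the constant value $x$''. Item~(iii): if $f_n\downarrow f$, then monotonicity and upper semicontinuity of $f_m$ give $\limsup_n f_n(x_n)\le f_m(x)$ for every $m$ and $x_n\to x$, whence $\Glimsup_n f_n\le f$, and the constant sequence is the recovery sequence. Item~(iv) is the subadditivity $\limsup_n\big(f_n(x_n)+g_n(x_n)\big)\le\limsup_n f_n(x_n)+\limsup_n g_n(x_n)$ plus monotonicity of the defining supremum. Item~(v): for any $x_n\to x$ the set $\{x_n:n\in\N\}\cup\{x\}$ is compact, so uniform convergence on compacts forces $f_n(x_n)\to f(x)$; hence $\limsup_n(f_n+g_n)(x_n)=f(x)+\limsup_n g_n(x_n)$, and both the $\Glimsup$-bound and the recovery sequence for $f+g$ descend from those of $g$. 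Item~(vi) uses $\limsup_n\big(a_n\vee g(x_n)\big)=(\limsup_n a_n)\vee g(x)$ for continuous $g$ and $x_n\to x$, together with $\sup_{x_n\to x}\big[(\limsup_n f_n(x_n))\vee g(x)\big]=(\Glimsup_n f_n)(x)\vee g(x)$. Item~(vii) is the gap-filling trick: extract a subsequence along which $\sup_{y\in B(x,\delta_n)}f_n(y)$ tends to the right-hand side, choose near-maximizers $y_k\in B(x,\delta_{n_k})$, set $z_{n_k}:=y_k$ and $z_n:=x$ otherwise, and observe $z_n\to x$ with $\limsup_n f_n(z_n)$ at least the claimed value.

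The main obstacle is item~(ii): one must check that the paper's notion of $\Glim$ (a $\Glimsup$-bound together with an honest, i.e.\ convergent, recovery sequence) coincides with the classical $\Gamma$-limit, and that the Urysohn property genuinely holds for it; I would either invoke metrizability / the $\Gamma$-topology from \cite{DalMaso93} or spell out the subsequence-monotonicity argument above, treating the cases $f(x)>(\Glimsup_n f_n)(x)$ and $f(x)=(\Glimsup_n f_n)(x)$ separately. All remaining steps are routine.
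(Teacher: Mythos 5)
Your proposal is correct and, at its core, takes the same route as the paper: the structurally nontrivial items (compactness in (i), the Urysohn property in (ii), monotone convergence in (iii)) are delegated to Dal Maso's book, while (iv)--(vii) are handled by elementary $\limsup$ arithmetic, gap-filling with the constant point $x$, and the compactness of $\{x_n\}\cup\{x\}$ for (v) -- exactly the paper's arguments. Your explicit reduction $f\mapsto f\kappa$ (using that $\kappa$ is continuous and strictly positive, so $\limsup_n f_n(x_n)\kappa(x_n)=\kappa(x)\limsup_n f_n(x_n)$) is a useful addition the paper leaves implicit, and your direct proofs of upper semicontinuity in (i) and of (iii) are fine. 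One caution about your by-hand fallback for the ``if'' direction of (ii): knowing $f=\Glimsup_{n\to\infty}f_n$ pointwise does \emph{not} by itself yield an honest recovery sequence (take constant functions $f_n=0$ for $n$ even, $f_n=-1$ for $n$ odd: the $\Gamma$-limit superior is $0$, yet no sequence satisfies $\lim_n f_n(x_n)=0$), so ``a recovery sequence for the full sequence is extracted'' is a genuine further step. To close it you must use the subsequence hypothesis once more, e.g.\ via the neighbourhood characterization: if $\inf_{U\ni x}\liminf_n\sup_{y\in U}f_n(y)<f(x)$, pick $U$ and a subsequence along which $\sup_{y\in U}f_{n_k}(y)\le f(x)-\epsilon$; then no further subsequence admits a recovery sequence at $x$, a contradiction, and the attainment of this lower $\Gamma$-quantity by an actual sequence (a diagonal argument over a countable neighbourhood base) then produces the recovery sequence. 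Since your primary route is to invoke \cite{DalMaso93} -- as the paper does -- this does not affect the validity of the proposal, but the sketched alternative should not be relied on as written.
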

\begin{proof}
 Part~(i) follows immediately from~\cite[Remark~4.11]{DalMaso93},~\cite[Theorem~4.16]{DalMaso93}
 and~\cite[Theorem~8.4]{DalMaso93}. Regarding part~(ii) and~(iii), we refer 
 to~\cite[Proposition~8.3]{DalMaso93} and~\cite[Proposition~5.4]{DalMaso93}. 
 art~(iv) and~(vii) are direct consequences of the definition of the $\Gamma$-limit superior. 
 In order to show part~(v), let $x\in X$ and $(x_n)_{n\in\N}\subset X$ be a sequence
 with $x_n\to x$ and $g(x)=\lim_{n\to \infty}g_n(x_n)$. For every $n\in\N$,
 \[ \big|(f+g)(x)- (f_n+g_n)(x_n)\big|
 	\leq|f(x)-f(x_n)|+\sup_{y\in K}|f(y)-f_n(y)|+|g(x)-g_n(x_n)|, \]
 where $K:=\{x_n\colon n\in\N\}\cup\{x\}$ is compact. Since $f_n\to f$ uniformly 
 on compacts and $f$ is continuous, the right-hand side converges to zero. We 
 obtain $f+g\leq\Glim_{n\to\infty}(f_n+g_n)$ and the reverse inequality follows
 from part~(iv).
 It remains to show part~(vi). The inequality $f\vee g\leq\Glimsup_{n\to\infty}(f_n\vee g)$
 follows from part~(iv). Let $(x_n)_{n\in\N}\subset X$ and $x\in X$ with 
 $x_n\to x$. Continuity of $g$ implies
 \[ \limsup_{n\to\infty}\,(f_n\vee g)(x)=\lim_{k\to\infty}(f_{n_k}\vee g)(x_k)
 	=\Big(\limsup_{k\to\infty}f_{n_k}(x_{n_k})\Big)\vee g(x)\leq (f\vee g)(x), \]
 where $(x_{n_k})_{k\in\N}$ is a suitable subsequence approximating the limit superior.
\end{proof}

The following geometric characterization of the $\Gamma$-limit superior is based on 
the work of Beer~\cite{Beer78} and is very useful to link $\Gamma$-upper 
semicontinuity with continuity from above. Let $f\in\Uk$ and $\epsilon>0$. 
Following~\cite{Beer78}, the upper $\epsilon$-parallel function to $f$ is defined by
\[ f^\epsilon\colon X\to\R,\; x\mapsto\frac{1}{\kappa(x)}\Big(\sup_{y\in B(x,\epsilon)}
	\max\big\{ f(y)\kappa(y),-\tfrac{1}{\epsilon}\big\}+\epsilon\Big), \]
where $B(x,\epsilon):=\{y\in X\colon d(x,y)\leq\epsilon\}$. 
If closed bounded sets in $X$ are compact 
one can show that $f^\epsilon$ is upper semicontinuous, see the proof of~\cite[Lemma~1.3]{Beer78}, 
but for a general metric space $(X,d)$ the upper semicontinuity of $f^\epsilon$ cannot 
be guaranteed. For this reason, we consider the upper semicontinuous envelope of $f^\epsilon$
defined by
\[ \fe\colon X\to\R,\; x\mapsto\limsup_{y\to x}f^\epsilon(y)
	=\inf_{\delta>0}\sup_{y\in B(x,\delta)}f^\epsilon(y).\]
Denoting by $f^\epsilon$ the constant sequence $(f^\epsilon)_{n\in\N}$, 
for every $x\in X$,
\[ \fe(x)=\sup\Big\{\limsup_{n\to \infty}f^\epsilon(x_n)\colon (x_n)_{n\in \N}\subset X 
	\mbox{ with }x_n\to x\Big\} =\Glimsup_{n\to \infty}f^\epsilon.\]

\begin{lemma} \label{lem:fe} \Newline
 \begin{enumerate}
  \item For every $f\in \Uk$ and $\epsilon>0$, 
   \begin{equation} \label{eq:fe1}
    f^\epsilon\leq\fe\leq\inf_{\epsilon'>\epsilon}f^{\epsilon'}
	\quad\mbox{and}\quad
	-\tfrac{1}{\epsilon}\leq \fe\kappa\leq\|f^+\|_\kappa+\epsilon. 
   \end{equation}
   Furthermore, it holds $\fe\in\Uk$ for all $\epsilon>0$ and $\fe\downarrow f$ as 
   $\epsilon\downarrow 0$. 
  \item Let $(f_n)_{n\in\N}\subset\Uk$ be bounded above and $f\in\Uk$. Then, 
  $\Glimsup_{n\to\infty}f_n\leq f$ if and only if, for every $\epsilon>0$ and $K\Subset X$, 
  there exists $n_0\in\N$ with
  \begin{equation} \label{eq:fe2}
    f_n(x)\leq\fe(x) \quad\mbox{for all } x\in K \mbox{ and } n\geq n_0.
   \end{equation}
 \end{enumerate}
\end{lemma}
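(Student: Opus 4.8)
The plan is to argue directly from the definitions of the upper $\epsilon$-parallel function $f^\epsilon$ and of its upper semicontinuous envelope $\fe$. For part~(i), I would first record that $f^\epsilon$ is real-valued: since $f\in\Uk$ gives $f\kappa\le\|f^+\|_\kappa$ and $\max\{\,\cdot\,,-1/\epsilon\}\ge-1/\epsilon$, the inner supremum defining $f^\epsilon(x)\kappa(x)$ lies in $[-1/\epsilon,\|f^+\|_\kappa]$. The inequality $f^\epsilon\le\fe$ is immediate because $x\in B(x,\delta)$ for every $\delta>0$, and $\fe$ is upper semicontinuous since $g\mapsto\big(x\mapsto\inf_{\delta>0}\sup_{B(x,\delta)}g\big)$ always produces an upper semicontinuous function. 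The decisive estimate is $\fe\le\inf_{\epsilon'>\epsilon}f^{\epsilon'}$: fixing $\epsilon'>\epsilon$ and $x$, for $0<\delta<\epsilon'-\epsilon$ and $y\in B(x,\delta)$ the triangle inequality gives $B(y,\epsilon)\subseteq B(x,\epsilon')$, and since $-1/\epsilon\le-1/\epsilon'$ one gets $f^\epsilon(y)\kappa(y)\le N+\epsilon$ with $N:=\sup_{z\in B(x,\epsilon')}\max\{f(z)\kappa(z),-1/\epsilon'\}$; dividing by $\kappa(y)$, taking the supremum over $y\in B(x,\delta)$ and letting $\delta\downarrow0$, continuity of $\kappa$ yields $\fe(x)\le(N+\epsilon)/\kappa(x)\le(N+\epsilon')/\kappa(x)=f^{\epsilon'}(x)$. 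I expect this step to be the main obstacle: one must keep track of the weights $1/\kappa(y)$ and of the possibly negative sign of $N+\epsilon$, and continuity of $\kappa$ enters precisely here; the detour through the envelope together with the strict inequality $\epsilon'>\epsilon$ is what replaces the compactness argument unavailable in a general metric space.

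Combining $\fe\le\inf_{\epsilon'>\epsilon}f^{\epsilon'}$ with $f^{\epsilon'}\kappa\le\|f^+\|_\kappa+\epsilon'$ gives $\fe\kappa\le\|f^+\|_\kappa+\epsilon$, while $\fe\kappa\ge f^\epsilon\kappa\ge-1/\epsilon$ is clear, so $\fe\in\Uk$. For $\fe\downarrow f$: the maps $\epsilon\mapsto B(x,\epsilon)$ and $\epsilon\mapsto-1/\epsilon$ are nondecreasing, hence $f^\epsilon$ is nondecreasing in $\epsilon$, and passing to upper semicontinuous envelopes preserves this, so $\fe$ is nondecreasing in $\epsilon$; moreover $f^\epsilon\ge f$ (take $y=x$, using the convention at $f(x)=-\infty$), so the pointwise limit $\lim_{\epsilon\downarrow0}\fe\ge f$. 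For the reverse inequality, $\fe\le f^{2\epsilon}$ reduces matters to $\inf_{\epsilon>0}f^\epsilon\le f$ pointwise, which follows from upper semicontinuity of $f\kappa$: given $x$ and $\eta>0$, choose $\delta$ with $f\kappa\le f(x)\kappa(x)+\eta$ on $B(x,\delta)$ (resp. $\le-M$ when $f(x)=-\infty$); for $\epsilon$ small the threshold $-1/\epsilon$ is dominated, so $f^\epsilon(x)\kappa(x)\le f(x)\kappa(x)+\eta+\epsilon$ (resp. $\le-M+\epsilon$), and letting $\epsilon\downarrow0$ then $\eta\downarrow0$ (resp. $M\to\infty$) finishes part~(i).

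For part~(ii), assume first the stated approximation property. Given $x\in X$ and $x_n\to x$, set $K:=\{x_n:n\in\N\}\cup\{x\}$, which is compact; for each $\epsilon>0$ there is $n_0$ with $f_n(x_n)\le\fe(x_n)$ for $n\ge n_0$, hence $\limsup_nf_n(x_n)\le\limsup_n\fe(x_n)\le\fe(x)$ by upper semicontinuity of $\fe$, and $\epsilon\downarrow0$ together with part~(i) gives $\limsup_nf_n(x_n)\le f(x)$; thus $\Glimsup_nf_n\le f$. Conversely, suppose the approximation property fails for some $\epsilon>0$ and $K\Subset X$; then there are $n_k\to\infty$ and $x_k\in K$ with $f_{n_k}(x_k)>\fe(x_k)$, and by compactness we may assume $x_k\to x^*\in K$. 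For $k$ large $x^*\in B(x_k,\epsilon)$, so $f^\epsilon(x_k)\kappa(x_k)\ge\max\{f(x^*)\kappa(x^*),-1/\epsilon\}+\epsilon$; since $\fe\ge f^\epsilon$ and $\kappa(x_k)\to\kappa(x^*)$ this forces $\liminf_kf_{n_k}(x_k)\ge\big(\max\{f(x^*)\kappa(x^*),-1/\epsilon\}+\epsilon\big)/\kappa(x^*)>f(x^*)$. Splicing the $x_k$ into the sequence that is constant equal to $x^*$ off the indices $n_k$ yields a sequence converging to $x^*$ along which $f_m$ has $\limsup>f(x^*)$, contradicting $\Glimsup_nf_n\le f$.
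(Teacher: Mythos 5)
Your proof is correct. Part~(i) and the ``if'' direction of part~(ii) follow the paper's argument essentially verbatim: the key estimate $\fe\leq\inf_{\epsilon'>\epsilon}f^{\epsilon'}$ is obtained, as in the paper, from the inclusion $B(y,\epsilon)\subset B(x,\epsilon')$ for $y$ close to $x$ together with continuity of $\kappa$ (your tracking of the sign of $N+\epsilon$ is, if anything, more careful than the paper's factor $c_{\epsilon'}(x)$), and the convergence $\fe\downarrow f$ is reduced to $f^\epsilon\downarrow f$ via upper semicontinuity of $f\kappa$, including the case $f(x)=-\infty$. The one place you genuinely deviate is the ``only if'' direction of (ii): the paper, following Beer, argues directly, assigning to each $x\in K$ a radius $r_x<\epsilon$ and an index $n_x$ such that $(f_n\kappa)(y)\leq\max\{(f\kappa)(x),-\tfrac{1}{\epsilon}\}+\epsilon$ for $y\in B(x,r_x)$ and $n\geq n_x$, and then extracting a finite subcover of $K$ to obtain a uniform $n_0$; you instead argue by contradiction, pass to a convergent subsequence $x_k\to x^*$ of violating points, use $x^*\in B(x_k,\epsilon)$ together with $\fe\geq f^\epsilon$ and continuity of $\kappa$ to force $\liminf_k f_{n_k}(x_k)>f(x^*)$, and then splice the $x_k$ into a full sequence converging to $x^*$ to contradict $\Glimsup_{n\to\infty}f_n\leq f$ (the splicing is exactly what is needed, since the $\Gamma$-limit superior is defined through full sequences rather than subsequences). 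Both routes rest on compactness of $K$, continuity of $\kappa$ and the same pointwise inequality; the paper's version is constructive and produces the uniform index directly, while yours trades the covering bookkeeping for sequential compactness plus a contradiction argument.
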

\begin{proof}
 First, we show inequality~\eqref{eq:fe1}. Let $f\in \Uk$, $\epsilon>0$ and $x\in X$. For 
 every $\epsilon'>\epsilon$,
 \begin{align*}
  f^\epsilon(x)
  &\leq\fe(x)\leq\sup_{y\in B(x,\epsilon'-\epsilon)}f^\epsilon(y) \\
  &=\sup_{y\in B(x,\epsilon'-\epsilon)}\frac{1}{\kappa(y)}
 	 \Big(\sup_{z\in B(y,\epsilon)}\max\big\{f(z)\kappa(z),-\tfrac{1}{\epsilon}\big\}+\epsilon\Big) \\
  &\leq \frac{c_{\epsilon'}(x)}{\kappa(x)}\Big(\sup_{y\in B(x,\epsilon'-\epsilon)}
 	 \sup_{z\in B(y,\epsilon)}\max\big\{f(z)\kappa(z),-\tfrac{1}{\epsilon}\big\}+\epsilon\Big)\\
  &\leq\frac{c_{\epsilon'}(x)}{\kappa(x)}\Big(\sup_{z\in B(x,\epsilon')}
  	\max\big\{f(z)\kappa(z), -\tfrac{1}{\epsilon'}\big\}+\epsilon'\Big)
  	=c_{\epsilon'}(x) f^{\epsilon'}(x),
 \end{align*}
 where $c_{\epsilon'}(x):=\sup_{y\in B(x,\epsilon'-\epsilon)}\frac{\kappa(x)}{\kappa(y)}$. 
 Continuity of $\kappa$ implies $c_{\epsilon'}(x)\downarrow 1$ as $\epsilon'\downarrow\epsilon$. 
 Hence, taking the infimum over $\epsilon'>\epsilon$ in the previous estimate yields the
 first part of inequality~\eqref{eq:fe1}. Furthermore, we can estimate
 \[ \big(\fe\kappa\big)^+(x)\leq\inf_{\epsilon'>\epsilon}\big(f^{\epsilon'}\kappa\big)^+(x)
 	\leq\inf_{\epsilon'>\epsilon}\Big(\sup_{y\in X}\,(f\kappa)^+(y)+\epsilon'\Big)
 	=\|f^+\|_\kappa+\epsilon. \]
 In particular, we obtain $\fe\in\Uk$, because $\fe$ is upper semicontinuous by definition.
	
 Second, we show that $\overline{f}^\delta\downarrow f$ as $\delta\downarrow 0$. 
 Due to inequality~\eqref{eq:fe1} it is sufficient to prove $f^\delta\downarrow f$ as 
 $\delta\downarrow 0$. Since $f\kappa$ is upper semicontinuous, for every $x\in X$ 
 and  $\epsilon>0$, there exists $\delta>0$ such that $(f\kappa)(y)\leq (f\kappa)(x)+\epsilon$ 
 for all $y\in B(x,\delta)$. We obtain 
 \[ f(x)\leq f^\delta(x)
 	=\frac{1}{\kappa(x)}\sup_{y\in B(x,\delta)}\max\big\{(f\kappa)(y),-\tfrac{1}{\delta}\big\}+\delta
 	\leq\max\big\{f(x),-\tfrac{1}{\delta}\big\}+\delta+\epsilon. \]
 This implies $f(x)\leq\inf_{\delta>0} f^\delta(x)\leq f(x)+\epsilon\downarrow f(x)$
 as $\epsilon\downarrow 0$.
	
 Third, let $(f_n)_{n\in\N}\subset\Uk$ be bounded above and $f\in\Uk$ with
 $\Glimsup_{n\to\infty}f_n\leq f$. We follow the proof of~\cite[Lemma~1.5]{Beer78}
 to verify inequality~\eqref{eq:fe2}. Let $K\Subset X$ and $\epsilon>0$.
 Since $\Glimsup_{n\to\infty}f_n\leq f$ and $\kappa>0$ is continuous, we obtain
 \[ \limsup_{n\to\infty}(f_n\kappa)(x_n)\leq (f\kappa)(x)
 	\quad\mbox{for all } x\in K \mbox{ and } (x_n)_{n\in\N}\subset X \mbox{ with }x_n\to x.\]
 Hence, for every $x\in K$, there exist $n_x\in\N$ and $r_x\in (0,\epsilon)$ such that
 \[ (f_n\kappa)(y)\leq\max\big\{(f\kappa)(x),-\tfrac{1}{\epsilon}\big\}+\epsilon
 	\quad\mbox{for all } n\geq n_x \mbox{ and } y\in B(x,r_x). \]
 By compactness of $K$, we can choose $x_1,\ldots,x_k\in K$ with 
 $K\subset\bigcup_{i=1}^k B(x_i,r_{x_i})$. Define $n_0:=n_{x_1}\vee\ldots\vee n_{x_k}$. 
 Let $x\in K$ and $i\in\{1,\ldots,k\}$ with $d(x,x_i)<r_{x_i}<\epsilon$. We obtain
 \[ f_n(x)\leq\frac{1}{\kappa(x)}\Big(\max\big\{(f\kappa)(x_i),-\tfrac{1}{\epsilon}\big\}+\epsilon\Big)
 	\leq\fe(x) \quad\mbox{for all } n\geq n_0.\]
 
 Fourth, let $(f_n)_{n\in\N}\subset\Uk$ be bounded above and $f\in\Uk$ such that 
 inequality~\eqref{eq:fe2} is valid. Let $x\in X$ and $(x_n)_{n\in\N}\subset X$ with 
 $x_n\to x$. Since $K:=\{x_n\colon n\in\N\}\cup\{x\}$ is compact, for every $\epsilon>0$,
 there exists $n_0\in\N$ such that $f_n(x_n)\leq\fe(x_n)$ for all $n\geq n_0$. We obtain
 $\limsup_{n\to\infty}f_n(x_n)\leq\fe(x)$ for all $\epsilon>0$, because $\fe$ is upper 
 semicontinuous. Hence, part~(i) implies 
 $\Glimsup_{n\to\infty}f_n\leq\inf_{\epsilon>0}\fe=f$.
\end{proof}

The definition of $\fe$ simplifies if $(X,d)$ satisfies an additional geometric property.

\begin{lemma}
 Assume that $(X,d)$ has midpoints, i.e., for every $x,z\in X$ and $\lambda\in [0,1]$, 
 there exists $y_\lambda\in X$ with $d(x,y_\lambda)=\lambda d(x,z)$ and
 $d(y_\lambda,z)=(1-\lambda)d(x,z)$.
 Then, it holds $\fe=\inf_{\epsilon'>\epsilon} f^{\epsilon'}$ for all $f\in\Uk$ and 
 $\epsilon>0$.
\end{lemma}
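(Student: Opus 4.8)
The plan is to compute both $\fe$ and $\inf_{\epsilon'>\epsilon}f^{\epsilon'}$ in closed form and to observe that they coincide; the midpoint hypothesis on $(X,d)$ enters at exactly one point. Fix $f\in\Uk$, $\epsilon>0$ and $x\in X$, write $B(x,\rho):=\{y\in X\colon d(x,y)\le\rho\}$ as in the text, and set $S(\rho):=\sup_{z\in B(x,\rho)}(f\kappa)(z)$, which is non-decreasing in $\rho$ and bounded above by $\|f^+\|_\kappa<\infty$ since $f\in\Uk$. First I would record the set identity
\[ B(x,\epsilon+\delta)=\bigcup_{y\in B(x,\delta)}B(y,\epsilon)\qquad\text{for every }\delta\ge 0. \]
The inclusion ``$\supseteq$'' is the triangle inequality and requires no assumption on $(X,d)$. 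For ``$\subseteq$'', given $z$ with $d(x,z)\le\epsilon+\delta$ one takes $\lambda:=\min\{1,\delta/d(x,z)\}$ (and $y:=x$ if $z=x$) and lets $y$ be a midpoint $y_\lambda$ between $x$ and $z$; then $d(x,y)=\min\{d(x,z),\delta\}\le\delta$ and $d(y,z)=(1-\lambda)d(x,z)\le\epsilon$, so $z\in B(y,\epsilon)$ with $y\in B(x,\delta)$.

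Next I would feed this identity into the definition of $f^\epsilon$. Since $(f^\epsilon\kappa)(y)=\max\{\sup_{z\in B(y,\epsilon)}(f\kappa)(z),-\tfrac1\epsilon\}+\epsilon$ and $\sup_y\max\{a_y,c\}=\max\{\sup_y a_y,c\}$ on any nonempty index set, the identity gives
\[ \sup_{y\in B(x,\delta)}(f^\epsilon\kappa)(y)=\max\{S(\epsilon+\delta),-\tfrac1\epsilon\}+\epsilon . \]
Because $f^\epsilon\kappa$ is bounded (between $\epsilon-\tfrac1\epsilon$ and $\|f^+\|_\kappa+\epsilon$) and $\kappa$ is continuous and strictly positive, $\sup_{y\in B(x,\delta)}f^\epsilon(y)$ and $\tfrac1{\kappa(x)}\sup_{y\in B(x,\delta)}(f^\epsilon\kappa)(y)$ differ by a term vanishing as $\delta\downarrow 0$ — this is the same $\kappa$-continuity estimate used in the proof of Lemma~\ref{lem:fe}. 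Taking the infimum over $\delta>0$ (all quantities monotone in $\delta$, and $\rho\mapsto\max\{S(\rho),-1/\epsilon\}$ non-decreasing) yields
\[ \fe(x)\,\kappa(x)=\max\{S_0,-\tfrac1\epsilon\}+\epsilon,\qquad S_0:=\inf_{\epsilon'>\epsilon}S(\epsilon'). \]
On the other hand, directly from the definition $f^{\epsilon'}(x)\,\kappa(x)=\max\{S(\epsilon'),-1/\epsilon'\}+\epsilon'$ is non-decreasing in $\epsilon'$, so letting $\epsilon'\downarrow\epsilon$ and using continuity of $\max$ gives $\inf_{\epsilon'>\epsilon}f^{\epsilon'}(x)\,\kappa(x)=\max\{S_0,-\tfrac1\epsilon\}+\epsilon$ as well. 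Comparing the two displays and dividing by $\kappa(x)>0$ yields $\fe(x)=\inf_{\epsilon'>\epsilon}f^{\epsilon'}(x)$, and since $x\in X$ was arbitrary the claim follows. One could equally invoke Lemma~\ref{lem:fe}(i) for the inequality ``$\le$'' and keep only the part of the computation producing ``$\ge$''.

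I expect the single genuinely non-routine ingredient to be the set identity $B(x,\epsilon+\delta)=\bigcup_{y\in B(x,\delta)}B(y,\epsilon)$: without midpoints only ``$\supseteq$'' holds, which is exactly why the general statement in Lemma~\ref{lem:fe}(i) is merely an inequality. Everything else — interchanging $\inf_\delta$ with $\max\{\cdot,-1/\epsilon\}+\epsilon$, transferring from $f^\epsilon\kappa$ back to $f^\epsilon$ via continuity of $\kappa$, and the monotone-limit computation of $\inf_{\epsilon'>\epsilon}f^{\epsilon'}$ — is bookkeeping of the type already carried out in the proof of Lemma~\ref{lem:fe}.
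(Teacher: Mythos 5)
Your proof is correct and rests on exactly the same idea as the paper's: the midpoint property yields the ball decomposition $B(x,\epsilon+\delta)=\bigcup_{y\in B(x,\delta)}B(y,\epsilon)$ (the paper uses precisely its ``$\subseteq$'' half to rewrite the supremum over $B(x,\epsilon')$ as a double supremum over $y\in B(x,\epsilon'-\epsilon)$ and $z\in B(y,\epsilon)$), after which everything reduces to shrinking the radius. The only differences are bookkeeping: you compute both sides in closed form and absorb the $\kappa$-correction into an additive error using the boundedness of $f^\epsilon\kappa$, whereas the paper proves only the inequality $\inf_{\epsilon'>\epsilon}f^{\epsilon'}\leq\fe$ directly, with a multiplicative factor $c_{\epsilon'}(x)\downarrow 1$, and cites inequality~\eqref{eq:fe1} for the converse.
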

\begin{proof}
 Let $x\in X$ and $\epsilon'>\epsilon$. Since $(X,d)$ has midpoints, for every 
 $z\in B(x,\epsilon')$ there exists $y\in X$ with $d(x,y)\leq\epsilon'-\epsilon$ and 
 $d(y,z)\leq\epsilon$. Hence, we can estimate
 \begin{align*}
  f^{\epsilon'}(x) 
  &=\frac{1}{\kappa(x)}\sup_{z\in B(x,\epsilon')} 
  	\Big(\max\big\{f(z)\kappa(z),-\tfrac{1}{{\epsilon'}}\big\}+\epsilon'\Big) \\
  &=\frac{1}{\kappa(x)}\sup_{y\in B(x,\epsilon'-\epsilon)}\sup_{z\in B(y,\epsilon)}
  	\Big(\max\big\{f(z)\kappa(z),-\tfrac{1}{\epsilon'}\big\}+\epsilon'\Big) \\
  &\leq c_{\epsilon'}(x)\sup_{y\in B(x,\epsilon'-\epsilon)}\frac{1}{\kappa(y)}\sup_{z\in B(y,\epsilon)}
  	\Big(\max\big\{f(z)\kappa(z),-\tfrac{1}{\epsilon'}\big\}+\epsilon'\Big),
 \end{align*}
 where $c_{\epsilon'}(x):=\sup_{y\in B(x,\epsilon'-\epsilon)}\frac{\kappa(y)}{\kappa(x)}$. 
 Continuity of $\kappa$ implies $c_{\epsilon'}\downarrow 1$ as $\epsilon'\downarrow\epsilon$.
 We obtain
 \[ \inf_{\epsilon'>\epsilon} f^{\epsilon'}(x)
 	\leq\inf_{\epsilon'>\epsilon}\sup_{y\in B(x,\epsilon'-\epsilon)}f^\epsilon(y)
	= \fe(x).\]
 The reverse estimate follows from inequality~\eqref{eq:fe1}.
\end{proof}

\section{Basic convexity estimates}\label{sec:convestim}

\begin{lemma} \label{lem:lambda}
 Let $\X$ be a vector space and $\phi\colon\mathcal{X}\to\R$ be a convex functional. 
 Then, 
 \[ \phi(x)-\phi(y)\leq\lambda\left(\phi\left(\frac{x-y}{\lambda}+y\right)-\phi(y)\right)
 	\quad\mbox{for all } x,y\in\X \mbox{ and } \lambda\in (0,1]. \]
\end{lemma}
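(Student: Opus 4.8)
The plan is to exhibit $x$ as a convex combination of $\frac{x-y}{\lambda}+y$ and $y$, and then apply the defining inequality of convexity. Concretely, for fixed $x,y\in\X$ and $\lambda\in(0,1]$, set $z:=\frac{x-y}{\lambda}+y$. The first step is the algebraic identity
\[ \lambda z+(1-\lambda)y=\lambda\Big(\frac{x-y}{\lambda}+y\Big)+(1-\lambda)y=(x-y)+\lambda y+(1-\lambda)y=x, \]
so that $x=\lambda z+(1-\lambda)y$ is a genuine convex combination of $z$ and $y$ since $\lambda\in(0,1]$.

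Next, I would invoke convexity of $\phi$ applied to this combination, which gives
\[ \phi(x)=\phi\big(\lambda z+(1-\lambda)y\big)\leq\lambda\phi(z)+(1-\lambda)\phi(y). \]
Finally, subtracting $\phi(y)$ from both sides yields
\[ \phi(x)-\phi(y)\leq\lambda\phi(z)+(1-\lambda)\phi(y)-\phi(y)=\lambda\phi(z)-\lambda\phi(y)=\lambda\big(\phi(z)-\phi(y)\big), \]
which is exactly the claimed estimate once $z$ is written out as $\frac{x-y}{\lambda}+y$. The case $\lambda=1$ is trivial (both sides are equal), and for $\lambda\in(0,1)$ the above is the full argument.

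There is no real obstacle here: the only thing to be careful about is that the convex combination has nonnegative coefficients summing to one, which holds precisely because $\lambda\in(0,1]$, and that $\phi$ is assumed real-valued on all of $\X$ so no $\infty-\infty$ issues arise. Thus the entire proof reduces to the identity $x=\lambda z+(1-\lambda)y$ plus one application of the convexity inequality.
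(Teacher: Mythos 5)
Your proof is correct and is essentially identical to the paper's argument: both write $x=\lambda\bigl(\frac{x-y}{\lambda}+y\bigr)+(1-\lambda)y$ and apply the convexity inequality once, then subtract $\phi(y)$. Nothing further is needed.
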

\begin{proof}
 For every $x,y\in\X$ and $\lambda\in (0,1]$,
 \begin{align*}
  \phi(x)-\phi(y)
  &=\phi\left(\lambda\left(\frac{x-y}{\lambda}+y\right)+(1-\lambda)y\right)-\phi(y) \\
  &\leq\lambda\phi\left(\frac{x-y}{\lambda}+y\right)+(1-\lambda)\phi(y)-\phi(y) \\
  &=\lambda\left(\phi\left(\frac{x-y}{\lambda}+y\right)-\phi(y)\right). \qedhere
 \end{align*}
\end{proof}

\begin{corollary} \label{cor:kappa}
 Let $\Phi$ be a convex operator $\Phi\colon\Ck\to\Ck$ such that there exists $c\geq 0$ with
 $\|\Phi(f)\|_\kappa\leq c\|f\|_\kappa$ for all $f\in\Ck$. Then,
 \[ \Phi\left(f+\frac{a}{\kappa}\right)\leq\Phi(f)+\frac{c|a|}{\kappa}
 	\quad\mbox{for all } f\in\Ck \mbox{ and } a\in\R. \]
 Furthermore, if $\Phi$ is additionally monotone, then
 \[ \|\Phi f-\Phi g\|_\kappa\leq c\|f-g\|_\kappa \quad\mbox{for all } f,g\in\Ck. \]
\end{corollary}
\begin{proof}
 Let $f\in\Ck$ and $a\in\R$. For every $\lambda\in (0,1)$,
 \begin{align*}
  \Phi\left(f+\frac{a}{\kappa}\right)
  &\leq\lambda\Phi\left(\frac{1}{\lambda}f\right)+(1-\lambda)\Phi\left(\frac{a}{(1-\lambda)\kappa}\right) \\
  &\leq\lambda\Phi\left(\frac{1}{\lambda}f\right)+(1-\lambda)\frac{c|a|}{(1-\lambda)\kappa}
  =\lambda\Phi\left(\frac{1}{\lambda}f\right)+\frac{c|a|}{\kappa}.
 \end{align*}
 Lemma~\ref{lem:lambda} implies $\lambda\Phi\left(\frac{1}{\lambda}f\right)\to\Phi(f)$ as $\lambda\to 1$
 and the first part of the claim follows. Furthermore, if $\Phi$ is additionally monotone, one can 
 apply the previous estimate with $a:=\|f-g\|_\kappa$ to obtain the second part of the claim. 
\end{proof}

Let  $\Fk$ be the space of all functions $f\colon X\to\Rbar$ with $\|f^+\|_\kappa<\infty$.

\begin{lemma} \label{lem:lip}
 Let $\Phi\colon\Ck\to\Fk$ be a convex monotone operator with $\Phi 0=0$.
 Then, the following statements are valid:
 \begin{enumerate}
  \item For every $r\geq 0$, there exists $c\geq 0$ with
   \[ \|\Phi f\|_\kappa\leq c\|f\|_\kappa \quad\mbox{for all } f\in B_{\Ck}(r). \]
   One can choose $c=0$ for $r=0$ and $c:=\frac{1}{r}\big\|\Phi\frac{r}{\kappa}\big\|_\kappa$ for $r>0$.\
   In particular, the function $\Phi f$ is real-valued, i.e., $\Phi f\colon X\to\R$ for all $f\in\Ck$.
  \item For every $r\geq 0$, there exists $c\geq 0$ with
   \[ \|\Phi f-\Phi g\|_\kappa\leq c\|f-g\|_\kappa \quad\mbox{for all } f, g\in B_{\Ck}(r). \]
   One can choose $c:=0$ for $r=0$ and $c:=\frac{1}{r}\sup_{f'\in B_{\Ck}(3r)}\|\Phi f'\|_\kappa<\infty$ 
   for $r>0$.
 \end{enumerate}
 The previous statements remain valid if we replace $\Ck$ by $\Bk$. 
\end{lemma}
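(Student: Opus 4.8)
The plan is to use only two ingredients: first, that convexity together with $\Phi(0)=0$ forces a sublinear‐type control on $\Phi$, bounding it from above by monotonicity and a one‐parameter scaling argument and from below by comparing $\Phi(f)$ with $\Phi(-f)$; and second, that the local Lipschitz estimate in~(ii) reduces to the local boundedness estimate in~(i) via the elementary convexity inequality in Lemma~\ref{lem:lambda}, after an appropriate choice of scaling parameter. Part~(i) must be proved first, since it is what guarantees $\Phi$ takes finite values and hence makes Lemma~\ref{lem:lambda} applicable in part~(ii).

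For part~(i) I would fix $r>0$ (the case $r=0$ is trivial) and $f\in B_{\Ck}(r)$ with $s:=\|f\|_\kappa$, assuming $s>0$. Pointwise $-\tfrac{s}{\kappa}\le f\le\tfrac{s}{\kappa}$, and $\tfrac{s}{\kappa}=\tfrac{s}{r}\cdot\tfrac{r}{\kappa}+(1-\tfrac{s}{r})\cdot 0$ with $\tfrac{s}{r}\in(0,1]$, so monotonicity and convexity give $\Phi f\le\Phi(\tfrac{s}{\kappa})\le\tfrac{s}{r}\Phi(\tfrac{r}{\kappa})$, whence $\|(\Phi f)^+\|_\kappa\le\tfrac{s}{r}\|(\Phi(\tfrac{r}{\kappa}))^+\|_\kappa$; note $\tfrac{r}{\kappa}\in\Ck$ so this is finite. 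For the lower bound, $0=\Phi(0)\le\tfrac12\Phi(f)+\tfrac12\Phi(-f)$ together with $\Phi(-f)\in\Fk$ (so $\Phi(-f)<+\infty$ everywhere) forces $\Phi f>-\infty$ everywhere and $-\Phi f\le\Phi(-f)$; applying the upper estimate to $-f$ (which has the same $\kappa$-norm $s$) yields $(\Phi f)^-=(-\Phi f)^+\le(\Phi(-f))^+\le\tfrac{s}{r}(\Phi(\tfrac{r}{\kappa}))^+$. Since $|\Phi f|\,\kappa=\max\{(\Phi f)^+,(\Phi f)^-\}\,\kappa$ pointwise and $\kappa>0$, combining the two bounds gives $\|\Phi f\|_\kappa\le\tfrac{s}{r}\|(\Phi(\tfrac{r}{\kappa}))^+\|_\kappa=c\|f\|_\kappa$ with $c=\tfrac1r\|(\Phi(\tfrac{r}{\kappa}))^+\|_\kappa$, and in particular $\Phi f$ is everywhere finite.

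For part~(ii) I would fix $r>0$ and $f,g\in B_{\Ck}(r)$, set $s:=\|f-g\|_\kappa\in(0,2r]$ (the case $s=0$ being trivial), $\lambda:=\tfrac{s}{2r}\in(0,1]$ and $h:=\tfrac1\lambda(f-g)+g$, so that $\|h\|_\kappa\le\tfrac{s}{\lambda}+r=3r$, i.e.\ $h,g\in B_{\Ck}(3r)$. For each fixed $x$, Lemma~\ref{lem:lambda} applied to the convex functional $\Ck\ni\varphi\mapsto(\Phi\varphi)(x)$ (real‐valued by part~(i)) gives $(\Phi f)(x)-(\Phi g)(x)\le\lambda\big((\Phi h)(x)-(\Phi g)(x)\big)\le\lambda\big(|(\Phi h)(x)|+|(\Phi g)(x)|\big)$; multiplying by $\kappa(x)$ and using part~(i) with radius $3r$ to bound $\|\Phi h\|_\kappa,\|\Phi g\|_\kappa\le M:=\sup_{f'\in B_{\Ck}(3r)}\|\Phi f'\|_\kappa<\infty$ yields $\big((\Phi f)(x)-(\Phi g)(x)\big)\kappa(x)\le 2\lambda M=\tfrac{sM}{r}$. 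Exchanging $f$ and $g$ gives the reverse inequality, so $\|\Phi f-\Phi g\|_\kappa\le\tfrac{M}{r}\,s=c\|f-g\|_\kappa$ with $c=\tfrac1r\sup_{f'\in B_{\Ck}(3r)}\|\Phi f'\|_\kappa$, finite by part~(i).

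The $\Bk$-version needs no new idea: $\tfrac{r}{\kappa}\in\Bk$, every element of $B_{\Bk}(r)$ is finite‐valued, and the only functions formed above — scalar multiples, the convex combination $\tfrac{s}{\kappa}=\tfrac{s}{r}\tfrac{r}{\kappa}$, and $\tfrac1\lambda(f-g)+g$ — remain Borel measurable and land in $B_{\Bk}(3r)$, so the same computations carry over verbatim with $\Ck$ replaced by $\Bk$. I do not anticipate a genuine obstacle; the only points needing care are the $\Rbar$-valued arithmetic (one must invoke $\Phi\colon\Ck\to\Fk$, i.e.\ finiteness of positive parts, before manipulating the pointwise inequalities, which is also what excludes the value $-\infty$ for $\Phi f$) and choosing $\lambda$ so that $h$ stays within $B_{\Ck}(3r)$; both are routine bookkeeping rather than real difficulties.
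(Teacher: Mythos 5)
Your proof is correct and follows essentially the same route as the paper: part (i) is the identical scaling-plus-symmetrization argument (upper bound via monotonicity/convexity against $\tfrac{r}{\kappa}$, lower bound via $0\leq\tfrac12\Phi(f)+\tfrac12\Phi(-f)$ and $\Phi(-f)<+\infty$), and the $\Bk$ case is handled at the same level of detail as in the paper. The only cosmetic difference is in part (ii), where you invoke Lemma~\ref{lem:lambda} directly with $\lambda=\|f-g\|_\kappa/(2r)$ and the auxiliary function $h=\tfrac{1}{\lambda}(f-g)+g\in B_{\Ck}(3r)$, while the paper applies part (i) to the shifted operator $\Phi_f(f'):=\Phi(f+f')-\Phi(f)$; both are the same convexity estimate and yield the same constant $c=\tfrac1r\sup_{f'\in B_{\Ck}(3r)}\|\Phi f'\|_\kappa$.
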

\begin{proof}
 First, let $r>0$, $f\in B_{\Ck}(r)$ and $\lambda:=\frac{\|f\|_\kappa}{r}$. We
 use the fact that $\Phi$ is convex and monotone with $\Phi 0=0$ to estimate
 \[ \Phi f=\Phi\big(\lambda\tfrac{1}{\lambda}f+(1-\lambda)0\big)
 	\leq\lambda\Phi\big(\tfrac{1}{\lambda}f\big)
 	\leq\lambda\Phi\tfrac{r}{\kappa}
 	=\tfrac{\|f\|_\kappa}{r}\Phi\tfrac{r}{\kappa}. \]
 Moreover, it follows from the convexity of $\Phi$ and $\Phi 0=0$ that
 \[ 0=\Phi 0=\Phi\big(\tfrac{1}{2}f+\tfrac{1}{2}(-f)\big)
 	\leq\tfrac{1}{2}\Phi f+\tfrac{1}{2}\Phi(-f). \]
 We conclude $(\Phi(\pm f))(x)>-\infty$ for all $x\in X$ and $-\Phi(-f)\leq\Phi f$. 
 Combining the previous estimates yields
 \[ -\tfrac{\|f\|_\kappa}{r}\Phi\tfrac{r}{\kappa}\leq -\Phi(-f)\leq\Phi f
 	\leq\tfrac{\|f\|_\kappa}{r}\Phi\tfrac{r}{\kappa}. \]
 Hence, it holds $\|\Phi f\|_\kappa\leq c\|f\|_\kappa$ with
 $c:=\frac{1}{r}\big\|\Phi\frac{r}{\kappa}\big\|_\kappa<\infty$. 
 
 Second, let $r\geq 0$ and $f,g\in B_{\Ck}(r)$. We define
 \[ \Phi_f\colon\Ck\to\Fk,\; f'\mapsto\Phi(f+f')-\Phi f \quad\mbox{for all } f'\in\Ck. \]
 Note, that $\|\Phi f\|_\kappa<\infty$ by the first part and therefore
 $\Phi f'\in\Fk$ for all $f'\in\Ck$. Furthermore, it follows from the first part that
 \[ \|\Phi f-\Phi g\|_\kappa=\big\|\Phi_f(f-g)\big\|_\kappa
 	\leq\frac{1}{2r}\big\|\big(\Phi_f(\tfrac{2r}{\kappa})\big)^+\big\|_\kappa\|f-g\|_\kappa
 	\leq c\|f-g\|_\kappa, \]
 where $c:=\frac{1}{r}\sup_{f'\in B_{\Ck}(3r)}\|\Phi f'\|_\kappa<\infty$.
\end{proof}

\section{Extension of convex monotone functionals}
\label{app:ext}

Denote by $\ca$ the set of all Borel measures $\mu\colon\B(X)\to [0,\infty]$ with 
$\int_X \frac{1}{\kappa}\,\d\mu<\infty$. Let $\phi\colon\Ck\to\R$ be a convex 
monotone functional with $\phi(0)=0$ and define 
\begin{equation}\label{def:convconjugate}
\phi^*\colon\ca\to [0,\infty],\; \mu\mapsto\sup_{f\in\Ck}\big(\mu f-\phi(f)\big), 
	\quad\mbox{where}\quad \mu f:=\int_X f\,\d\mu. 
 \end{equation}
Based on the results from~\cite{BCK19}, we obtain the following extension and 
dual representation result.

\begin{theorem} \label{thm:dual}
 Let $\phi\colon\Ck\to\R$ be a convex monotone functional with $\phi(0)=0$
 which is continuous from above. Then, the following statements are valid: 
 \begin{enumerate}
  \item For every $r\geq 0$, there exists a $\sigma(\ca,\Ck)$-compact convex set 
  $M_r\subset\ca$ with
  \[ \phi(f)=\max_{\mu\in M_r}\big(\mu f-\phi^*(\mu)\big) 
  	\quad\mbox{for all } f\in B_{\Ck}(r). \]
  Moreover, one can choose 
  $M_r:=\{\mu\in\ca\colon\phi^*(\mu)\leq\phi(\nicefrac{2r}{\kappa})
  	-2\phi(\nicefrac{-r}{\kappa})\}$.
  \item Define 
   $\bar{\phi}\colon\Uk\to\Rbar,\;f\mapsto\inf\{\phi(g)\colon g\in\Ck,\, g\geq f\}$.
   Then, the functional $\bar{\phi}$ is convex, monotone and the unique extension of $\phi$ 
   which is continuous from above. In addition, $\bar{\phi}$ admits the dual representation
   \[ \bar{\phi}(f)=\max_{\mu\in M_r}\big(\mu f-\phi^*(\mu)\big) 
   	\quad\mbox{for all } r\geq 0 \mbox{ and } f\in B_{\Uk}(r). \]
   \item Define 
    $\hat{\phi}\colon\Bk\to\Rbar,\; f\mapsto\lim_{c\to\infty}\sup_{\mu\in\ca}
    \big(\mu\big(\max\big\{f,-\tfrac{c}{\kappa}\big\}\big)-\phi^*(\mu)\big)$.
    Then, the functional $\hat{\phi}$ is convex, monotone and an extension of $\phi$.
    In addition, $\hat{\phi}$ admits the dual representation
    \[ \hat{\phi}(f)=\sup_{\mu\in M_r}\big(\mu f-\phi^*(\mu)\big) 
    	\quad\mbox{for all } r\geq 0 \mbox{ and } f\in B_{\Bk}(r), \]
    In particular, for every $\epsilon>0$ and $r\geq 0$, there exists $K\Subset X$ with 
    $\hat{\phi}\big(\frac{r}{\kappa}\one_{K^c}\big)<\epsilon$.
 \end{enumerate}
\end{theorem}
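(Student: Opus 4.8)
The plan is to treat part~(i) as a Fenchel--Moreau-type duality that is essentially the content of~\cite{BCK19}, and then to build $\phi_1$ and $\phi_2$ explicitly on top of it. First I would record that, by Lemma~\ref{lem:lip}, $\phi$ is $\|\cdot\|_\kappa$-Lipschitz and in particular bounded on every ball $B_{\Ck}(r)$. Next I would exploit the lattice isomorphism $\Ck\to\Cb$, $f\mapsto f\kappa$, together with the identification of $\ca$ with the cone of finite positive Borel measures on $X$ via $\mu\mapsto\tfrac1\kappa\cdot\mu$; under these maps the pairing $\mu f=\int f\,\d\mu$ becomes the ordinary integration pairing of a bounded continuous function against a finite measure, and ``continuity from above'' of $\phi$ translates into the hypothesis under which~\cite{BCK19} produces the representation $\phi(f)=\sup_{\mu\in\ca}(\mu f-\phi^*(\mu))$ with the supremum attained on $\|\cdot\|_\kappa$-bounded sets. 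To identify $M_r$, I would take $f\in B_{\Ck}(r)$ and a maximizer $\mu^*$; since $-\tfrac r\kappa\le f\le\tfrac r\kappa$, monotonicity of $\phi$ and the definition of $\phi^*$ give $\phi^*(\mu^*)=\mu^* f-\phi(f)\le r\,\mu^*(\tfrac1\kappa)-\phi(-\tfrac r\kappa)$, while $\phi^*(\mu^*)\ge\mu^*(\tfrac{2r}\kappa)-\phi(\tfrac{2r}\kappa)$ yields $r\,\mu^*(\tfrac1\kappa)\le\tfrac12(\phi^*(\mu^*)+\phi(\tfrac{2r}\kappa))$; combining these rearranges to $\phi^*(\mu^*)\le\phi(\tfrac{2r}\kappa)-2\phi(-\tfrac r\kappa)$, so $\mu^*$ lies in the stated sublevel set $M_r$. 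Convexity of $M_r$ is clear, and $\sigma(\ca,\Ck)$-closedness follows since $\phi^*$ is a supremum of $\sigma(\ca,\Ck)$-continuous affine maps, hence $\sigma(\ca,\Ck)$-lower semicontinuous; $\sigma(\ca,\Ck)$-compactness is imported from~\cite{BCK19}.

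For part~(ii), convexity and monotonicity of $\phi_1$ are immediate from the definition, and $\phi_1|_{\Ck}=\phi$ by monotonicity of $\phi$. For the dual representation on $B_{\Uk}(r)$: for any $g\in\Ck$ with $g\ge f$ and $\mu\in M_r$ one has $\phi(g)\ge\mu g-\phi^*(\mu)\ge\mu f-\phi^*(\mu)$, so $\phi_1(f)\ge\sup_{\mu\in M_r}(\mu f-\phi^*(\mu))$; conversely, since the explicit sequence exhibited in the footnote to $\Uk=(\Ck)_\delta$ can, for bounded $f$, be chosen to lie in $B_{\Ck}(r)$, one gets $g_n\downarrow f$ with $g_n\in B_{\Ck}(r)$, whence $\phi_1(f)\le\inf_n\phi(g_n)=\inf_n\max_{\mu\in M_r}(\mu g_n-\phi^*(\mu))$. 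Interchanging the limit and the maximum --- take maximizers $\mu_n\in M_r$, pass to a $\sigma(\ca,\Ck)$-convergent subnet, and use monotone convergence $\mu g_n\downarrow\mu f$ for each fixed $\mu$ together with upper semicontinuity of $\mu\mapsto\mu f-\phi^*(\mu)$ --- yields $\phi_1(f)\le\max_{\mu\in M_r}(\mu f-\phi^*(\mu))$, which also shows the maximum is attained. The same interchange argument, applied after truncating a decreasing sequence $f_n\downarrow f$ from below by $-\tfrac k\kappa$ and letting $k\to\infty$, gives continuity from above of $\phi_1$. Finally, any continuous-from-above extension $\tilde\phi$ satisfies $\tilde\phi(f)=\lim_n\tilde\phi(g_n)=\lim_n\phi(g_n)$ for any $g_n\downarrow f$ in $\Ck$, and by the computation just described this common value equals $\phi_1(f)$, so $\tilde\phi=\phi_1$ and uniqueness follows.

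For part~(iii), note that $f\mapsto\mu(\max\{f,-\tfrac c\kappa\})-\phi^*(\mu)$ is convex, so for each $c$ its supremum over $\mu$ is convex, and $\phi_2$ is the pointwise decreasing limit of these as $c\to\infty$; since the family is monotone, the limit commutes with convex combinations and $\phi_2$ is convex, while monotonicity is clear. For $f\in B_{\Bk}(r)$ we have $\max\{f,-\tfrac c\kappa\}=f$ for $c\ge r$, so $\phi_2(f)=\sup_{\mu\in\ca}(\mu f-\phi^*(\mu))$; in particular for $f\in\Ck$ this equals $\phi(f)$ (the inequality ``$\le$'' from $\phi^*(\mu)\ge\mu f-\phi(f)$, ``$\ge$'' from part~(i)), giving the extension property. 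To replace $\ca$ by $M_r$ in the representation on $B_{\Bk}(r)$, I would check that measures $\mu\in\ca\setminus M_r$ contribute at most $\phi(-\tfrac r\kappa)$: using $f\le\tfrac r\kappa$ and $\phi^*(\mu)\ge\mu(\tfrac{2r}\kappa)-\phi(\tfrac{2r}\kappa)$ exactly as above one gets $\mu f-\phi^*(\mu)\le\tfrac12\phi(\tfrac{2r}\kappa)-\tfrac12\phi^*(\mu)<\phi(-\tfrac r\kappa)$; on the other hand, $f\ge-\tfrac r\kappa$ and the fact that the maximizer $\mu_0\in M_r$ of $\mu\mapsto\mu(-\tfrac r\kappa)-\phi^*(\mu)$ (which exists by part~(i)) gives $\sup_{\mu\in M_r}(\mu f-\phi^*(\mu))\ge\phi(-\tfrac r\kappa)$, so the tail outside $M_r$ is negligible. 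For the compact-support statement, the representation gives $\phi_2(\tfrac r\kappa\one_{K^c})\le r\sup_{\mu\in M_r}\mu(\tfrac1\kappa\one_{K^c})$ (since $\phi^*\ge0$); under the identification above, $M_r$ corresponds to a weakly relatively compact --- hence, by Prokhorov's theorem on the complete separable metric space $X$, uniformly tight --- family of finite Borel measures, so there is $K\Subset X$ with $\sup_{\mu\in M_r}\mu(\tfrac1\kappa\one_{K^c})<\tfrac\varepsilon r$.

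I expect the \textbf{main obstacle} to be the $\sigma(\ca,\Ck)$-compactness of $M_r$ in part~(i): this is precisely where the hypothesis ``continuous from above'' is used, and under the identification above it amounts, via Prokhorov's theorem, to uniform tightness of the dual sublevel sets --- without it the Fenchel--Moreau argument would produce only finitely additive dual elements. This is the part one genuinely imports from~\cite{BCK19}; everything else --- the explicit forms of $M_r$, $\phi_1$, $\phi_2$, the minimax interchanges, and the tightness step for the compact-support property --- is then routine bookkeeping built on part~(i).
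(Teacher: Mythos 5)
Your proposal is correct, and parts (i) and (iii) follow the paper's own proof essentially verbatim: the representation over $\ca$ and the $\sigma(\ca,\Ck)$-compactness of the sublevel sets are imported from \cite{BCK19}, the bound $\phi^*(\mu)\leq\phi(\nicefrac{2r}{\kappa})-2\phi(-\nicefrac{r}{\kappa})$ for maximizers is obtained by the same two-sided estimate, and the final tightness step via Prokhorov is identical (your explicit verification that measures outside $M_r$ cannot attain the supremum in part (iii) is actually more detailed than the paper's ``similar to the first part''). Where you genuinely diverge is part (ii). The paper first proves $\phi_1(f)=\lim_n\phi(f_n)$ for every decreasing approximation $f_n\downarrow f$ (via the auxiliary functions $f_n\vee g_k$), deduces convexity from this limit formula, establishes continuity from above on all of $\Uk$ by a diagonal construction $\tilde f_n=\min\{f_1^n,\dots,f_n^n\}$, and only then re-invokes \cite{BCK19} applied to $\phi_1$ to obtain its dual representation. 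You instead prove the dual representation on $B_{\Uk}(r)$ first, by a direct compactness argument (maximizers $\mu_n\in M_r$, a $\sigma(\ca,\Ck)$-convergent subnet, lower semicontinuity of $\phi^*$ and upper semicontinuity of $\mu\mapsto\mu f$ for bounded $f\in\Uk$), and then extract continuity from above by truncating from below with $-\nicefrac{k}{\kappa}$; convexity you get directly from the definition of $\phi_1$ as an infimum. Both routes rest on the same compactness of $M_r$; yours is more self-contained in that it avoids re-applying the representation theorem to the extended functional on $\Uk$, at the price of the subnet interchange, while the paper's diagonal argument keeps the continuity-from-above step duality-free. Two small points worth writing out: in the truncation step you should note that any $g\in\Ck$ with $g\geq f$ satisfies $g\geq\max\{f,-\nicefrac{k}{\kappa}\}$ once $k\geq\|g\|_\kappa$, so that $\inf_k\phi_1(\max\{f,-\nicefrac{k}{\kappa}\})=\phi_1(f)$ and no circularity arises; and in the subnet interchange the monotone-convergence step $\mu_0 g_m\downarrow\mu_0 f$ must be applied after first bounding $\mu_{n_\alpha}g_{n_\alpha}\leq\mu_{n_\alpha}g_m$ for fixed $m$, exactly as you indicate.
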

\begin{proof}
 First, we apply~\cite[Theorem~2.2]{BCK19} to obtain
 \[ \phi(f)=\max_{\mu\in\ca}\big(\mu f-\phi^*(\mu)\big) \quad\mbox{for all } f\in\Ck. \]
 Let $r\geq 0$ and $f\in B_{\Ck}(r)$. Choose $\mu\in\ca$ with $\phi(f)=\mu f-\phi^*(\mu)$.
 It follows from the definition of $\phi^*$ and the monotonicity of $\phi$ that
 \[ \mu\tfrac{2r}{\kappa}-\phi\big(\tfrac{2r}{\kappa}\big) \leq\phi^*(\mu)
 	=\mu f-\phi(f)\leq\mu\tfrac{r}{\kappa}-\phi\big(-\tfrac{r}{\kappa}\big). \]
 We obtain $\mu \tfrac{r}{\kappa}\leq\phi(\tfrac{2r}{\kappa})-\phi(-\tfrac{r}{\kappa})$
 and therefore $\phi^*(\mu)\leq\phi\big(\tfrac{2r}{\kappa})-2\phi\big(-\tfrac{r}{\kappa}\big)$.
 Hence, 
 \[ \phi(f)=\max_{\mu\in M_r}\big(\mu f-\phi^*(\mu)\big)
 	\quad\mbox{for all } f\in B_{\Ck}(r), \]
 where 
 $M_r:=\big\{\mu\in\ca\colon\phi^*(\mu)\leq\phi\big(\nicefrac{2r}{\kappa})
 	-2\phi\big(-\nicefrac{r}{\kappa}\big)\big\}$.
 Moreover, the set $M_r$ is convex and $\sigma(\ca,\Ck)$-compact, 
 see~\cite[Theorem~2.2]{BCK19}.

 Second, by monotonicity of $\phi$, the functional $\bar{\phi}$ is monotone and 
 an extension of $\phi$. We show that $\bar{\phi}(f)=\lim_{n\to\infty}\phi(f_n)$ 
 for all sequences $(f_n)_{n\in\N}\subset\Ck$ and $f\in\Uk$ with $f_n\downarrow f$. By 
 definition of the infimum, there exists a sequence $(g_k)_{k\in\N}\subset\Ck$ 
 such that $\phi(g_k)\to\bar{\phi}(f)$ as $k\to\infty$. Let $g_n^k:=f_n\vee g_k$ 
 for all $k,n\in\N$. Since $g_n^k\downarrow g_k$ as $n\to\infty$,  and $\phi$
 is monotone and continuous from above, we obtain
 \[ \lim_{n\to\infty}\phi(f_n)\leq\lim_{n\to\infty}\phi(g_n^k)=\phi(g_k)
 	\quad\mbox{for all } k\in\N. \] 
 The monotonicity of $\bar{\phi}$ implies 
 $\bar{\phi}(f)\leq\lim_{n\to\infty}\phi(f_n)\leq\lim_{k\to\infty}\phi(g_k)=\bar{\phi}(f)$.
 In particular, it follows that $\bar{\phi}$ is convex. Indeed, let $f,g\in\Uk$ and 
 $\lambda\in [0,1]$. Since $\Uk=(\Ck)_\delta$ and $\Ck$ is directed
 downwards, there exist sequences $(f_n)_{n\in\N}$ and $(g_n)_{n\in\N}$ 
 in $\Ck$ with $f_n\downarrow f$ and $g_n\downarrow g$. We obtain
 \begin{align*}
  \bar{\phi}(\lambda f+(1-\lambda)g)
  &=\lim_{n\to\infty}\phi(\lambda f_n+(1-\lambda)g_n) 
  \leq\lim_{n\to\infty}\big(\lambda\phi(f_n)+(1-\lambda)g_n\big) \\
  &=\lambda\bar{\phi}(f)+(1-\lambda)\bar{\phi}(g).
 \end{align*}

 Third, we show that $\bar{\phi}$ is continuous from above.  Let $(f_n)_{n\in\N}\subset\Uk$ 
 and $f\in\Uk$ with $f_n\downarrow f$. Since $\Uk=(\Ck)_\delta$, for every $k\in\N$, 
 there exists a sequence $(f_k^n)_{n\in\N}\subset\Ck$ with $f_k^n\downarrow f_k$
 as $n\to\infty$. Define $\tilde f_n:=\min\{f_1^n,\ldots,f_n^n\}\in\Ck$ for all 
 $n\in\N$. It holds
 \begin{align*}
  \tilde{f}_{n+1} &=\min\{f_1^{n+1},\ldots,f_n^{n+1},f_{n+1}^{n+1}\}
  	\leq\min\{f_1^n,\ldots,f_n^n\}=\tilde{f}_n \quad\mbox{for all } n\in\N, \\
  f_n &=\min\{f_1,\ldots,f_n\}\leq\min\{f_1^n,\ldots,f_n^n\}=\tilde{f}_n 
  	\quad\mbox{for all } n\in\N, \\
  \tilde{f}_n &=\min\{f_1^n,\ldots,f_k^n,\ldots,f_n^n\}\leq f_k^n
  	\quad\mbox{for all } k,n\in\N \mbox{ with } k\leq n.
 \end{align*}
 We obtain
 $f=\lim_{n\to\infty}f_n\leq\lim_{n\to\infty}\tilde f_n\leq\lim_{n\to\infty}f_k^n=f_k$
 for all $k\in\N$. Hence, it follows from the monotonicity of $\bar{\phi}$ and the second 
 part of the proof that
 \[ \bar{\phi}(f)\leq\lim_{n\to\infty}\bar{\phi}(f_n)\leq\lim_{n\to\infty}\phi(\tilde f_n)=\bar{\phi}(f). \]
 We have shown that $\bar{\phi}$ is continuous from above and thus~\cite[Theorem~2.2]{BCK19} 
 implies 
 \[ \bar{\phi}(f)=\max_{\mu\in\ca}\big(\mu f-\phi^*(\mu)\big) \]
 for all bounded $f\in\Uk$. 
 By the same arguments as in the first step, the maximum in the previous equation 
 can be taken over the set $M_r$ for all $r\geq 0$ and $f\in B_{\Uk}(r)$. 
 The uniqueness of $\bar{\phi}$ as extension, which is continuous from above,
 follows from $\Uk=(\Ck)_\delta$.

 Fourth, the functional $\hat{\phi}$ is clearly convex and monotone. Since
 $\bar{\phi}$ is continuous from above, we obtain
 \[ \bar{\phi}(f)=\lim_{c\to\infty}\bar{\phi}\big(\max\big\{f,-\tfrac{c}{\kappa}\big\}\big)
 	=\hat{\phi}(f) \quad\mbox{for all } f\in\Uk. \]
 Similar to the first part of this proof, it follows that the supremum in the definition
 of $\hat{\phi}$ can be taken over $M_r'$ for all $r\geq 0$ and $f\in B_{\Bk}(r)$. 
 By~\cite[Theorem~2.2]{BCK19}, the set $M_r'$ is $\sigma(\ca,\Ck)$-compact 
 and convex. The last statement follows from $\phi^*\geq 0$ and the fact that, by 
 Prokhorov's theorem, the set $\{\mu_\kappa\colon\mu\in M_r'\}$ is tight for all $r\geq 0$, 
 where $\mu_\kappa(A):=\int_A\frac{1}{\kappa}\,\d\mu$ for all $A\in\B(X)$.
\end{proof}

 Let $\phi\colon\Ck\to\R$ be a convex monotone functional with $\phi(0)=0$
 which is continuous from above at zero, i.e., $\phi(f_n)\downarrow 0$ for all 
 $(f_n)_{n\in\N}\subset\Ck$ with $f_n\downarrow 0$. Then, it follows from the 
 proof of~\cite[Theorem~2.2]{BCK19} that $\phi$ is continuous from above, 
 i.e., $\phi(f_n)\downarrow \phi(f)$ for all $(f_n)_{n\in\N}\subset\Ck$ and $f\in\Ck$
 with $f_n\downarrow f$. However, if we replace $\Ck$ by $\Uk$, this statement
 does not remain valid, because $\Uk$ is not a vector space.

\begin{lemma} \label{lem:cont.app}
	Let $(\phi_i)_{i\in I}$ be a family of convex monotone functionals $\phi_i\colon\Ck\to\R$ 
	with $\phi_i(0)=0$ and $\sup_{i\in I}\sup_{f\in B_{\Ck}(r)}|\phi_i(f)|<\infty \quad\mbox{for all } r\geq 0$.
	Then, the following two statements are equivalent:
	\begin{enumerate}
		\item[(i)] It holds $\sup_{i\in I}\phi_i(f_n)\downarrow 0$ for all sequences $(f_n)_{n\in\N}\subset\Ck$
		with $f_n\downarrow 0$. 
		\item[(ii)] For every $\epsilon>0$ and $r\geq 0$, there exist $c\geq 0$ and $K\Subset\Rd$ with 
		\[ \sup_{i\in I}|\phi_i(f)-\phi_i(g)|\leq c\|f-g\|_{\infty, K}+\epsilon \quad
		\mbox{for all }i\in I\mbox{ and }f,g\in B_{\Ck}(r).\] 
	\end{enumerate}
\end{lemma}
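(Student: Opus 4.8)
The plan is to treat the two implications separately; $(ii)\Rightarrow(i)$ is a quick consequence of Dini's theorem, whereas $(i)\Rightarrow(ii)$ is the substantial one. For $(ii)\Rightarrow(i)$ I would fix $(f_n)_{n\in\N}\subset\Ck$ with $f_n\downarrow 0$ and observe that, by monotonicity of the $\phi_i$, the sequence $\big(\sup_{i\in I}\phi_i(f_n)\big)_{n\in\N}$ is non-increasing with limit $L\ge\sup_{i\in I}\phi_i(0)=0$. Given $\epsilon>0$, applying $(ii)$ with $g:=0$ and $r:=\|f_1\|_\kappa$ produces $c\ge 0$ and $K\Subset\R^d$ with $\sup_{i\in I}\phi_i(f_n)\le c\|f_n\|_{\infty,K}+\epsilon$ for all $n$ (using $\phi_i(f_n)\ge 0$); since $f_n\downarrow 0$ uniformly on the compact $K$ by Dini's theorem, this forces $L\le\epsilon$, and letting $\epsilon\downarrow 0$ gives $(i)$.

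For $(i)\Rightarrow(ii)$ I would first isolate two ingredients. The soft one: writing $C_\rho:=\sup_{i\in I}\sup_{f\in B_{\Ck}(\rho)}|\phi_i(f)|<\infty$ and applying Lemma~\ref{lem:lip}(ii) to the convex monotone operator $\Ck\to\Fk$, $f\mapsto\phi_i(f)\one$, one obtains a local Lipschitz bound $|\phi_i(f)-\phi_i(g)|\le c_\rho\|f-g\|_\kappa$ on $B_{\Ck}(\rho)$ with $c_\rho$ independent of $i$. The essential one, which is where $(i)$ enters, is the tightness statement: for all $\rho\ge 0$ and $\epsilon'>0$ there exist a compact $K\Subset\R^d$ and a continuous $\chi\colon\R^d\to[0,1]$ with $\supp\chi\subset K$ and $\sup_{i\in I}\phi_i\big(\tfrac{\rho}{\kappa}(1-\chi)\big)<\epsilon'$. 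To prove it I would exhaust $\R^d$ by balls: choosing $\chi_n\in\Cb$ with $0\le\chi_n\le\chi_{n+1}\le 1$, $\chi_n\equiv 1$ on $B_{\R^d}(n)$ and $\supp\chi_n\subset B_{\R^d}(n+1)$, the functions $\tfrac{\rho}{\kappa}(1-\chi_n)$ lie in $B_{\Ck}(\rho)$ and decrease pointwise to $0$, so $(i)$ yields $\sup_{i\in I}\phi_i\big(\tfrac{\rho}{\kappa}(1-\chi_n)\big)\to 0$, and $\chi:=\chi_n$ for large $n$ works.

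With these in hand I would argue as follows. Fix $\epsilon>0$ and $r\ge 0$; choose $\lambda\in(0,1)$ with $\lambda\big(\tfrac{1}{2}C_{6r}+C_{3r}\big)<\tfrac{\epsilon}{2}$, set $\rho:=4r/\lambda$, and apply the tightness statement with this $\rho$ and $\epsilon':=\epsilon$ to obtain $K,\chi$. For $f,g\in B_{\Ck}(r)$ put $h:=f-g$; then $f\le g+h^+$ and, using monotonicity, Lemma~\ref{lem:lambda} applied to $\phi_i$, the pointwise bounds $g+h^+\chi\le\tfrac{3r}{\kappa}$ and $\tfrac{1}{\lambda}h^+(1-\chi)\le\tfrac{2r}{\lambda\kappa}(1-\chi)$, and one more convexity split, a short computation should give
\begin{align*}
 \phi_i(f)-\phi_i(g)
 &\le\big(\phi_i(g+h^+\chi)-\phi_i(g)\big)
  +\lambda\Big(\phi_i\big(g+h^+\chi+\tfrac{1}{\lambda}h^+(1-\chi)\big)-\phi_i(g+h^+\chi)\Big) \\
 &\le c_{3r}\|h^+\chi\|_\kappa
  +\lambda\Big(\tfrac{1}{2}\phi_i\big(\tfrac{6r}{\kappa}\big)
  +\tfrac{1}{2}\phi_i\big(\tfrac{\rho}{\kappa}(1-\chi)\big)+C_{3r}\Big) \\
 &\le c_{3r}\Big(\sup_{x\in X}\kappa(x)\Big)\|f-g\|_{\infty,K}
  +\lambda\big(\tfrac{1}{2}C_{6r}+C_{3r}\big)+\tfrac{\lambda}{2}\epsilon,
\end{align*}
where $\|h^+\chi\|_\kappa\le\big(\sup_{x\in X}\kappa(x)\big)\|f-g\|_{\infty,K}$ since $h^+\chi$ is supported in $K$. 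As $\lambda<1$ the last two summands are $<\epsilon$, so $\phi_i(f)-\phi_i(g)\le c\|f-g\|_{\infty,K}+\epsilon$ with $c$ independent of $i$; interchanging $f$ and $g$ and taking $\sup_{i\in I}$ gives $(ii)$.

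The hard part is the tightness statement. On $\R^d$ (the setting in the statement) the exhaustion by balls makes it routine, but the analogue on a general complete separable metric space $X$ has no such exhaustion and would instead have to be extracted from the dual representation of Theorem~\ref{thm:dual}: each $\phi_i$ is then automatically continuous from above and represented by a $\sigma(\ca,\Ck)$-compact set of measures, and the uniform bound $C_\rho<\infty$ confines all these dual sets to a single uniformly tight family, so that Prokhorov's theorem delivers the required compact $K$. Making that uniformity precise is the genuine difficulty; the estimate above is essentially bookkeeping once it is available.
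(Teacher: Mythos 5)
Your proof is correct as far as the statement is literally written (with $K\Subset\R^d$), but for the implication $(i)\Rightarrow(ii)$ it takes a genuinely different route from the paper. The paper's proof is dual: condition~(i) makes $\phi:=\sup_{i\in I}\phi_i$ a convex monotone functional that is continuous from above, so Theorem~\ref{thm:dual} represents each $\phi_i$ on $B_{\Ck}(r)$ by measures lying in the single $\sigma(\ca,\Ck)$-relatively compact set $M$ attached to $\phi$; Prokhorov's theorem then supplies the compact $K$, and the estimate $|\phi_i(f)-\phi_i(g)|\le\sup_{\mu\in M}\bigl(\mu(K)\|f-g\|_{\infty,K}+\int_{K^c}\tfrac{2r}{\kappa}\,\d\mu\bigr)$ finishes the argument. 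Your proof replaces this with a primal convexity computation: Lemma~\ref{lem:lip}(ii) for the uniform $\|\cdot\|_\kappa$-Lipschitz bound, Lemma~\ref{lem:lambda} to isolate the tail term $h^+(1-\chi)$ at cost $O(\lambda)$, and condition~(i) applied to the decreasing sequence $\tfrac{\rho}{\kappa}(1-\chi_n)\downarrow 0$ to make that tail small; I checked the chain of estimates (including the order of choices $\lambda\to\rho\to K$) and it is sound. What your approach buys is elementariness — no Fenchel--Moreau duality, no Prokhorov — at the price of generality: the exhaustion $\chi_n\equiv 1$ on $B_{\R^d}(n)$ exists only because $\R^d$ is $\sigma$-compact, whereas the lemma is invoked elsewhere in the paper (e.g.\ in Lemma~\ref{lem:uniform}) for an arbitrary complete separable metric space $X$, where the ``$K\Subset\R^d$'' in the statement should evidently read ``$K\Subset X$''. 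You identify this limitation yourself and correctly point to the dual representation as the fix; in that sense your caveat paragraph is not a gap in the proof of the stated lemma but an accurate description of why the paper argues as it does. The direction $(ii)\Rightarrow(i)$ is the same Dini argument as in the paper.
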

\begin{proof}
	Suppose that condition~(i) is satisfied. Let $\epsilon>0$ and $r\geq 0$. Since 
	$\phi_i\colon\Ck\to\R$ is continuous from above, we can apply Theorem~\ref{thm:dual}
	to obtain
	\begin{equation} \label{eq:dual}
		\phi_i(f)=\max_{\mu\in M_i}\big(\mu f-\phi_i^*(\mu)\big) 
		\quad\mbox{for all } i\in I \mbox{ and } f\in B_{\Ck}(r), 
	\end{equation}
	where 
	$M_i:=\{\mu\in\ca\colon\phi_i^*(\mu)\leq\phi_i(\tfrac{2r}{\kappa})-2\phi_i(-\tfrac{r}{\kappa})\}$.
    Since the convex monotone functional
	$\phi\colon\Ck\to\R$, $f\mapsto\sup_{i\in I}\phi_i(f)$ is continuous from above, \cite[Theorem~2.2]{BCK19} implies that
	$M:=\{\mu\in\ca\colon\phi^*(\mu)\leq\sup_{i\in I}
	(\phi_i(\tfrac{2r}{\kappa})-2\phi_i(-\tfrac{r}{\kappa}))\}$
	is $\sigma(\ca, \Ck)$-relatively compact. Hence, Prokhorov's theorem 
	yields $K\Subset\Rd$ with 
	$\sup_{\mu\in M}\int_{K^c}\frac{1}{\kappa}\,\d\mu\leq\frac{\epsilon}{2r}$. 
	We use equation~\eqref{eq:dual} and 
	$M_i\subset M$ to obtain
	\begin{align*}
		&|\phi_i(f)-\phi_i(g)| 
		\leq\sup_{\mu\in M_i}|\mu f-\mu g| 
		\leq\sup_{\mu\in M_i}\left(\int_K |f-g|\,\d\mu+\int_{K^c}|f-g|\,\d\mu\right) \\
		&\leq\sup_{\mu\in M_i}\left(\mu(K)\|f-g\|_{\infty, K}+\int_{K^c}\frac{2r}{\kappa}\,\d\mu\right) 
		\leq c\|f-g\|_{\infty, K}+\epsilon
	\end{align*}
	for all $i\in I$, $f,g\in B_{\Ck}(r)$ and $c:=\sup_{\mu\in M}\mu(K)\leq\phi(1)+\sup_{\mu\in M}\phi^*(\mu)<\infty$. 
	
	Suppose that condition~(ii) is satisfied. Let $f_n\downarrow 0$ and $r:=\|f_1\|_\kappa$. 
	For every $\epsilon>0$, there exist $c\geq 0$ and $K\Subset\Rd$ with 
	$\sup_{i\in I}\phi_i(f_n)\leq c\|f_n\|_{\infty, K}+\nicefrac{\epsilon}{2}$ for all $n\in\N$.
	Hence, by Dini's theorem, there exists $n_0\in\N$ with $\sup_{i\in I}\phi_i(f_n)\leq\epsilon$ 
	for all $n\geq n_0$. 
\end{proof}

\begin{theorem} \label{thm:limsup} 
 Let $(\phi_n)_{n\in\N}$ be a sequence of functionals $\phi_n\colon\Ck\to\R$ which satisfy 
 the following conditions:
 \begin{itemize}
  \item $\phi_n$ is convex and monotone with $\phi_n(0)=0$ for all $n\in\N$, 
  \item $\sup_{n\in\N}\sup_{f\in B_{\Ck}(r)}|\phi_n(f)|<\infty$ for all $r\geq 0$,
  \item $\sup_{n\in\N}\phi_n(f_k)\downarrow 0$ as $k\to\infty$ for all 
   $(f_k)_{k\in\N}\subset\Ck$ with $f_k\downarrow 0$.
 \end{itemize}
 For every $n\in\N$, Theorem~\ref{thm:dual} yields that the functional $\phi_n\colon\Ck\to\R$
 has a unique extension $\bar{\phi}_n\colon\Uk\to\Rbar$ which is continuous from above. 
 Let $(f_n)_{n\in\N}$ and $(g_n)_{n\in\N}$ be bounded sequences in $\Uk$. Then, 
 \[ \limsup_{n\to\infty}\bar{\phi}_n(f_n+g_n)\leq\limsup_{n\to\infty}\bar{\phi}_n(f+g_n), 
 	\quad\mbox{where}\quad f:=\Glimsup_{n\to\infty}f_n. \]
\end{theorem}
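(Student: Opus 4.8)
The plan is to follow the pattern of the proof of Lemma~\ref{lem:usc}: dominate $f_n$ on compact sets by the parallel functions $\fe$ of $f$, push this estimate through monotonicity and convexity to get a bound in terms of $\phi_n(\fe+g_n)$, and finally let $\epsilon\downarrow 0$. First I would set up notation: by hypothesis $\rho:=\sup_{n}\|f_n\|_\kappa<\infty$ and $\rho':=\sup_n\|g_n\|_\kappa<\infty$, and since boundedness in $\Uk$ is two-sided, $f:=\Glimsup_{n\to\infty}f_n$ satisfies $\|f\|_\kappa\le\rho$ (in particular $f>-\infty$), so that $f_n+g_n$ and $f+g_n$ all lie in the fixed ball $B_{\Uk}(\rho+\rho'+1)$. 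I would work throughout with the pointwise extensions of the $\phi_n$ to $\Bk$ furnished by Theorem~\ref{thm:dual}(iii); these are again convex and monotone, and by Lemma~\ref{lem:lip} (with $\Bk$ in place of $\Ck$) together with $\sup_n\sup_{B_{\Ck}(r)}|\phi_n|<\infty$ they are Lipschitz on every $\Bk$-ball with a constant that is uniform in $n$. It is convenient to replace $\fe$ by its lower truncation $\fe\vee(-\rho/\kappa)$, which still decreases pointwise to $f$ as $\epsilon\downarrow 0$ but now stays in $B_{\Uk}(\rho+1)$ independently of $\epsilon$; I keep writing $\fe$ for this truncation.

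Next comes the localisation–convexity step. Fix $\epsilon,\lambda\in(0,1)$ and $K\Subset X$. By Remark~\ref{rem:fe} there is $n_0$ with $f_n\le\fe$ on $K$ for $n\ge n_0$; combining this with $f_n\le\rho/\kappa$ and $\fe\ge-\rho/\kappa$ gives $f_n\le\fe+\tfrac{2\rho}{\kappa}\one_{K^c}$ for $n\ge n_0$, hence $\phi_n(f_n+g_n)\le\phi_n\big(\fe+g_n+\tfrac{2\rho}{\kappa}\one_{K^c}\big)$ by monotonicity. Writing $\fe+g_n+\tfrac{2\rho}{\kappa}\one_{K^c}=(1-\lambda)\tfrac{\fe+g_n}{1-\lambda}+\lambda\tfrac{2\rho}{\lambda\kappa}\one_{K^c}$, convexity of $\phi_n$ and the elementary estimate $\tfrac{\fe+g_n}{1-\lambda}\le\fe+g_n+\tfrac{\lambda(\rho+\rho'+1)}{(1-\lambda)\kappa}$, followed by the uniform Lipschitz bound, yield $\phi_n(f_n+g_n)\le(1-\lambda)\phi_n(\fe+g_n)+C\tfrac{\lambda}{1-\lambda}+\lambda\,\phi_n\big(\tfrac{2\rho}{\lambda\kappa}\one_{K^c}\big)$ for $n\ge n_0$, with $C$ absolute. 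The tail term is controlled uniformly in $n$: the convex monotone functional $\phi:=\sup_n\phi_n$ is continuous from above, so by Theorem~\ref{thm:dual} and Prokhorov's theorem the dual sets of all the $\phi_n$ lie inside the single $\sigma(\ca,\Ck)$-compact and tight set $\{\mu\in\ca:\phi^*(\mu)\le b\}$, which (exactly as in Lemma~\ref{lem:cont.app}) lets one choose $K$, for each $\lambda$, so that $\sup_n\phi_n\big(\tfrac{2\rho}{\lambda\kappa}\one_{K^c}\big)$ is arbitrarily small. Taking $\limsup_{n\to\infty}$, then sending the tail tolerance and then $\lambda$ to $0$ (the number $\limsup_n\phi_n(\fe+g_n)$ being finite, as $\fe+g_n$ stays in a fixed ball), one obtains $\limsup_{n\to\infty}\phi_n(f_n+g_n)\le\limsup_{n\to\infty}\phi_n(\fe+g_n)$ for every $\epsilon>0$.

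It then remains to let $\epsilon\downarrow 0$, i.e.\ to show $\inf_{\epsilon>0}\limsup_{n\to\infty}\phi_n(\fe+g_n)\le\limsup_{n\to\infty}\phi_n(f+g_n)$. For each fixed $n$ this is just continuity from above of $\phi_n$ along $\fe+g_n\downarrow f+g_n$; the point is to make it uniform in $n$. I would do this through the dual representation of Theorem~\ref{thm:dual}(ii): writing $\phi_n(\fe+g_n)=\mu_n^\epsilon(\fe+g_n)-\phi_n^*(\mu_n^\epsilon)$ for a maximiser $\mu_n^\epsilon$ lying in the common $\sigma(\ca,\Ck)$-compact tight set $M$ and comparing with $\phi_n(f+g_n)\ge\mu_n^\epsilon(f+g_n)-\phi_n^*(\mu_n^\epsilon)$ gives $\phi_n(\fe+g_n)-\phi_n(f+g_n)\le\mu_n^\epsilon(\fe-f)$; since after the truncation all functions stay in one ball, $M$ does not depend on $\epsilon$, and one then has to combine the tightness of $M$, the weak precompactness of $M$, and $\fe-f\downarrow 0$ to force $\limsup_n[\phi_n(\fe+g_n)-\phi_n(f+g_n)]\to 0$ as $\epsilon\downarrow 0$. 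I expect precisely this last step to be the main obstacle: $\fe$ does \emph{not} converge to $f$ uniformly on compact sets, so one cannot simply invoke the equicontinuity estimate of Lemma~\ref{lem:cont.app}, and one really has to exploit tightness and weak compactness of the common dual set $M$ — which is why keeping everything inside a single fixed ball (the truncation in the first step) is essential.
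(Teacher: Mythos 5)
Your first step is essentially sound and, up to presentation, equivalent to what the paper does: where you dominate $f_n\leq\fe+\tfrac{2\rho}{\kappa}\one_{K^c}$ and then split off the tail by convexity and a uniform Lipschitz bound, the paper plugs the same compact-set domination directly into the common dual representation $\phi_n(\cdot)=\max_{\mu\in M}(\mu(\cdot)-\phi_n^*(\mu))$ and absorbs the tail via tightness of $M$; both routes give $\limsup_{n\to\infty}\phi_n(f_n+g_n)\leq\limsup_{n\to\infty}\phi_n(\fe+g_n)$ for each $\epsilon$.

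The genuine gap is exactly where you locate it, and your sketch for closing it does not work. From $\phi_n(\fe+g_n)-\phi_n(f+g_n)\leq\mu_n^\epsilon(\fe-f)$ you would need $\inf_{\epsilon}\limsup_{n}\mu_n^\epsilon(\fe-f)\leq 0$, with maximizers $\mu_n^\epsilon$ depending on \emph{both} indices. Weak compactness and tightness of $M$ do not deliver this: along a convergent subsequence $\mu_n^\epsilon\to\mu^\epsilon$ one only gets $\limsup_n\mu_n^\epsilon(\fe)\leq\mu^\epsilon(\fe)$, because $\mu\mapsto\mu h$ is merely $\sigma(\ca,\Ck)$-upper semicontinuous for $h\in\Uk$; the term $-\liminf_n\mu_n^\epsilon(f)$ cannot be compared with $-\mu^\epsilon(f)$, since $\mu\mapsto\mu f$ is usc, not lsc, and $\fe-f$ is not continuous (your own example-type objection about non-uniform convergence of $\fe$ applies here). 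In other words, what is needed is an interchange of $\inf_{\epsilon}$ with a supremum over the dual variables, i.e.\ a minimax statement, and this is the additional idea in the paper's proof that your proposal is missing: one rewrites $\limsup_n\phi_n(\fe+g_n)=\inf_n\max_{\mu\in M}\big(\mu\fe-\alpha_n(\mu)\big)$ with $\alpha_n(\mu):=\inf_{k\geq n}\big(\phi_k^*(\mu)-\mu g_k\big)$, replaces $\alpha_n$ by its lower semicontinuous convex hull $\overline{\alpha}_n$ (Fenchel--Moreau guarantees the value of the maximum is unchanged), applies Fan's minimax theorem on the compact convex set $M$ to interchange $\inf_{\epsilon}$ and $\max_{\mu}$, and then uses $\mu\fe\downarrow\mu f$ pointwise in $\mu$ (monotone convergence, Lemma~\ref{lem:fe}) to land on $\inf_n\max_{\mu\in M}(\mu f-\alpha_n(\mu))=\limsup_n\phi_n(f+g_n)$. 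Note also that the paper never proves, and does not need, the stronger uniform claim $\limsup_n\big[\phi_n(\fe+g_n)-\phi_n(f+g_n)\big]\to 0$ that your route would require; the minimax argument circumvents it. Without an ingredient of this kind your second step remains an unproved assertion, so the proposal as it stands does not constitute a proof.
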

\begin{proof}
 First, we show that
 $\limsup_{n\to\infty}\bar{\phi}_n(f_n+g_n)\leq\limsup_{n\to\infty}\bar{\phi}_n(\fe+g_n)$
 for all $\epsilon\in (0,1]$. Since the functionals $(\phi_n)_{n\in\N}$ are uniformly 
 bounded, the functional
 \[ \phi\colon\Ck\to\R,\; f\mapsto\sup_{n\in\N}\phi_n(f) \]
 is well-defined, convex, monotone and satisfies $\phi(0)=0$. Furthermore, 
 it follows from Theorem~\ref{thm:dual}(ii) and $\phi^*\leq\phi_n^*$ that 
 \[ \bar{\phi}_n(f)=\max_{\mu\in M_r}\big(\mu f-\phi_n^*(\mu)\big)
 	\quad\mbox{for all } n\in\N,\, r\geq 0 \mbox{ and } f\in B_{\Uk}(r), \]
 where 
 $M_r:=\{\mu\in\ca\colon\phi^*(\mu)\leq\sup_{n\in\N}|\phi_n(\nicefrac{2r}{\kappa})
 	-2\phi_n(\nicefrac{-r}{\kappa})|\}$. 
 In the sequel, we fix $r:=\sup_{n\in\N}(\|f_n\|_\kappa+\|g_n\|_\kappa+1)$, 
 $M:=M_r$ and $\epsilon>0$. Since $\phi$ is continuous from above,
 by~\cite[Theorem~2.2]{BCK19} and Prokhorov's theorem, the set
 $\{\mu_\kappa\colon\mu\in M\}$ is tight, where 
 $\mu_\kappa(A):=\int_A\frac{1}{\kappa}\,\d\mu$. 
 Hence, there exists $K\Subset X$ with 
 $\sup_{\mu\in M}\mu_\kappa(K^c)(r+\nicefrac{1}{\epsilon})<\epsilon$.
 Moreover, by Lemma~\ref{lem:fe}, we can choose $n_0\in\N$ with
 $f_n(x)\leq\fe(x)$ for all $x\in K$ and $n\geq n_0$. 
 For every $n\geq n_0$, it follows from $\fe\geq -\frac{1}{\epsilon\kappa}$ that
 \begin{align*}
  \bar{\phi}_n(f_n+g_n)
  &=\max_{\mu\in M}\big(\mu(f_n+g_n)-\phi_n^*(\mu)\big) \\
  &\leq\max_{\mu\in M}\Big(\mu\Big(\fe+g_n+\big(\tfrac{r+\nicefrac{1}{\epsilon}}{\kappa}\big)
  	\one_{K^c}\big)\Big)-\phi_n^*(\mu)\Big) \\
  &\leq\max_{\mu\in M}\big(\mu(\fe+g_n)-\phi_n^*(\mu)\big)+\epsilon
    \leq\bar{\phi}_n(\fe+g_n)+\epsilon. 
 \end{align*}
 We obtain 
 $\limsup_{n\to\infty}\bar{\phi}_n(f_n+g_n)\leq\limsup_{n\to\infty}\bar{\phi}_n(\fe+g_n)+\epsilon$
 for all $\epsilon>0$.
 
 Second, we show 
 $\inf_{\epsilon\in (0,1]}\limsup_{n\to\infty}\bar{\phi}_n(\fe+g_n)
 	\leq\limsup_{n\to\infty}\bar{\phi}_n(f+g_n)$.
 Using the dual representation from the first part, we obtain
 \begin{align*}
  \inf_{\epsilon\in (0,1]}\limsup_{n\to\infty}\bar{\phi}_n(\fe+g_n)
  &=\inf_{n\in\N}\inf_{\epsilon\in (0,1]}\max_{\mu\in M}\sup_{k\geq n}
 	\big(\mu(\fe+g_k)-\phi_k^*(\mu)\big) \\
 &=\inf_{n\in\N}\inf_{\epsilon\in (0,1]}\max_{\mu\in M}
 	\big(\mu\fe-\alpha_n(\mu)\big),
 \end{align*}
 where $\alpha_n(\mu):=\inf_{k\geq n}(\phi_k^*(\mu)-\mu g_k)$. To
 interchange the maximum over $\mu\in M$ with the infimum over $\epsilon\in (0,1]$
 by using~\cite[Theorem~2]{Fan53}, we have to replace $\alpha_n$ by a convex
 lower semicontinuous function. It holds $\inf_{\mu\in M}\alpha_n(\mu)>-\infty$, 
 since $\phi_k^*\geq 0$ and $(g_k)_{k\in\N}$ is bounded. Hence, we 
 can define $\overline{\alpha}_n\colon M\to\R$ as the lower semicontinuous 
 convex hull of $\alpha_n$, i.e., the supremum over all lower semicontinuous 
 convex functions which are dominated by $\alpha_n$. Fenchel--Moreau's 
 theorem,~\cite[Theorem~2]{Fan53} and Lemma~\ref{lem:fe} imply
 \begin{align*}
  &\inf_{n\in\N}\inf_{\epsilon\in (0,1]}\max_{\mu\in M}
 	\big(\mu\fe-\alpha_n(\mu)\big)
  =\inf_{n\in\N}\inf_{\epsilon\in (0,1]}\max_{\mu\in M}
 	\big(\mu\fe-\overline{\alpha}_n(\mu)\big) \\
 &=\inf_{n\in\N}\max_{\mu\in M}\big(\mu f-\overline{\alpha}_n(\mu)\big) 
 =\inf_{n\in\N}\max_{\mu\in M}\big(\mu f-\alpha_n(\mu)\big)
 =\limsup_{n\to\infty}\bar{\phi}_n(f+g_n). \qedhere
 \end{align*}
\end{proof}

\section{Proof of Theorem~\ref{thm:I}}
\label{app:I}

We need the following version of Arz\'ela--Ascoli's theorem. A sequence $(f_n)_{n\in\N}$ 
of functions $f_n\colon X\to\R$ is called uniformly equicontinuous if and only if, for every 
$\epsilon>0$, there exists $\delta>0$ with $|f_n(x)-f_n(y)|<\epsilon$ for all $n\in\N$
and $x,y\in X$ with $d(x,y)<\delta$.

\begin{lemma} \label{lem:AA}
 Let $(f_n)_{n\in\N}\subset\Ck$ be bounded and uniformly equicontinuous. Then, there exist a 
 function $f\in\Ck$ and a subsequence $(n_l)_{l\in \N}$ with $f_{n_l}\to f$ uniformly on compacts.
\end{lemma}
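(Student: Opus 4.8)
The plan is to combine the classical diagonal extraction over a countable dense set with the uniform equicontinuity hypothesis, thereby avoiding any appeal to local or $\sigma$-compactness of $X$ (which is not available here). First I would fix a countable dense set $D=\{x_j\colon j\in\N\}\subset X$, which exists since $X$ is separable. Writing $C:=\sup_{n\in\N}\|f_n\|_\kappa<\infty$, the pointwise bound $|f_n(x_j)|\leq C/\kappa(x_j)$ shows that for each fixed $j$ the real sequence $(f_n(x_j))_{n\in\N}$ is bounded. A standard diagonal argument then produces a subsequence $(n_l)_{l\in\N}$ such that $\lim_{l\to\infty}f_{n_l}(x_j)$ exists in $\R$ for every $j\in\N$.

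Next I would use uniform equicontinuity to upgrade pointwise convergence on $D$ to uniform convergence on compacts. Let $K\Subset X$ and $\epsilon>0$, and choose $\delta>0$ with $|f_n(x)-f_n(y)|<\epsilon$ for all $n\in\N$ whenever $d(x,y)<\delta$. By compactness of $K$ and density of $D$ there are indices $j_1,\dots,j_m$ with $K\subset\bigcup_{i=1}^m B(x_{j_i},\delta)$. Since $(f_{n_l}(x_{j_i}))_{l\in\N}$ converges for each $i$, there is $l_0$ with $|f_{n_l}(x_{j_i})-f_{n_{l'}}(x_{j_i})|<\epsilon$ for all $l,l'\geq l_0$ and all $i$; the triangle inequality then yields $|f_{n_l}(x)-f_{n_{l'}}(x)|<3\epsilon$ for all $x\in K$ and $l,l'\geq l_0$. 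Hence $(f_{n_l})_{l\in\N}$ is uniformly Cauchy on every compact subset of $X$. Applying this with a singleton $K=\{x\}$ shows that $(f_{n_l}(x))_{l\in\N}$ converges for every $x\in X$; define $f(x)$ to be this limit. Letting $l'\to\infty$ in the previous estimate gives $\|f_{n_l}-f\|_{\infty,K}\leq 3\epsilon$ for $l\geq l_0$, i.e.\ $f_{n_l}\to f$ uniformly on $K$.

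It then remains to verify $f\in\Ck$. Passing to the limit in $|f_{n_l}(x)|\kappa(x)\leq C$ gives $\|f\|_\kappa\leq C<\infty$. For continuity I would invoke uniform equicontinuity once more: with $\delta$ as above, $d(x,y)<\delta$ forces $|f_{n_l}(x)-f_{n_l}(y)|<\epsilon$ for all $l$, and letting $l\to\infty$ yields $|f(x)-f(y)|\leq\epsilon$, so $f$ is (uniformly) continuous. Thus $f\in\Ck$ and $f_{n_l}\to f$ uniformly on compacts, which is the assertion.

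The point requiring the most care is the construction and regularity of the limit: since $X$ is only complete and separable, not locally compact, it cannot be exhausted by compact sets, and uniform convergence on compacts alone does not guarantee that the limit exists off $D$ or that it is continuous. Uniform equicontinuity is exactly what repairs both gaps, while the weight $\kappa$ enters only through the elementary bound $|f_n(x_j)|\leq C/\kappa(x_j)$ that launches the diagonal extraction.
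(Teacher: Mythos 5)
Your proof is correct and follows essentially the same route as the paper's: diagonal extraction over a countable dense set, then uniform equicontinuity to pass from the dense set to all of $X$ and to secure continuity and the $\kappa$-bound of the limit. The only cosmetic difference is the last step, where you obtain uniform convergence on compacts via a finite $\delta$-net and a $3\epsilon$ Cauchy estimate, while the paper first extends the limit from the dense set by uniform continuity and then argues by contradiction using sequential compactness of $K$; both are standard variants of the Arzel\`a--Ascoli argument.
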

\begin{proof}
 Let $D\subset X$ be countable and dense. By the Bolzano--Weierstrass theorem and a diagonalization 
 argument, there exists a subsequence $(n_l)_{l\in \N}$ such that the limit 
 \[ f(x):=\lim_{l\to\infty}f_{n_l}(x)\in\R \]
 exists for all $x\in D$. Since the mapping $D\to\R,\; x\mapsto f(x)$ is uniformly continuous 
 and satisfies $\sup_{x\in D}|f(x)|\kappa(x)<\infty$, there exists a unique extension $f\in\Ck$. 
 Since $(f_n)_{n\in\N}$ is uniformly equicontinuous, it holds $f_{n_l}(x_{n_l})\to f(x)$ for all 
 $x\in X$ and $x_{n_l}\to x$. In addition, we have $f_{n_l}\to f$ uniformly on compacts. Indeed, 
 assume by contradiction that there exist $\epsilon>0$, $K\Subset X$, a subsequence $(n_{l,1})_{l\in\N}$ 
 of $(n_l)_{l\in\N}$ and $x_{n_{l,1}}\in K$ with
 \[ |f_{n_{l,1}}(x_{n_{l,1}})-f(x_{n_{l,1}})|\geq\epsilon \quad\mbox{for all } l\in\N. \]
 Since $K$ is compact, we can choose a further subsequence $(n_{l,2})_{l\in\N}$ of 
 $(n_{l,1})_{l\in\N}$ and $x\in X$ with $x_{n_{l,2}}\to x$. It holds $f(x_{n_{l,2}})\to f(x)$ 
 and $f_{n_{l,2}}(x_{n_{l,2}})\to f(x)$ which leads to a contradiction. We obtain 
 $\lim_{l\to\infty}\|f-f_{n_l}\|_{\infty, K_{n_l}}=0$. 
\end{proof}

\begin{proof}[Proof of Theorem~\ref{thm:I}]
 First, we show that there exists a family $(S(t))_{t\geq 0}$ of convex monotone operators 
 $S(t)\colon\Ck\to\Ck$ with $S(t)0=0$ and $\D\subset\LI$ satisfying property~(ii) for all 
 $(f,t)\in\D\times\R_+$ and a subsequence $(n_l)_{l\in\N}\subset\N$ with
 \begin{equation} \label{eq:cher2.IS2}
  S(t)f=\lim_{l\to\infty}I(\pi_{n_l}^t)f \quad\mbox{for all } (f,t)\in\D\times\T. 
 \end{equation}
 Note that~\cite[Lemma~2.7-2.9]{BK23} which are applied in the sequel do not rely on the 
 relative compactness w.r.t. the norm topology required in~\cite[Assumption~2.4]{BK23}.
 Assumption~\ref{ass:I}(iii) and~(iv) and~\cite[Lemma~2.7]{BK23} imply
 \begin{equation} \label{eq:cher2.bound}
  I(\pi_n^t)f\in B_{\Ck}(\alpha(r,t)) \quad\mbox{and}\quad
  \|I(\pi_n^t)f-I(\pi_n^t)g\|_\kappa\leq e^{t\omega_{\alpha(r,t)}}\|f-g\|_\kappa
 \end{equation}
 for all $n\in\N$, $r,t\geq 0$ and $f,g\in B_{\Ck}(r)$. We use Assumption~\ref{ass:I}(v), 
 Lemma~\ref{lem:AA} and a diagonalization argument to choose a subsequence $(n_l)_{l\in\N}\subset\N$ 
 such that the limit 
 \[ S(t)f:=\lim_{l\to\infty}I(\pi_{n_l}^t)f\in\Ck \]
 exists for all $(f,t)\in\D\times\T$. Moreover, for every $f\in\D\subset\LI$, there exist
 $c\geq 0$ and $t_0>0$ such that~\cite[Lemma~2.8]{BK23} implies
 \begin{equation} \label{eq:Lset I}
  \|I(\pi_n^s)f-I(\pi_n^t)f\|_\kappa\leq ce^{T\omega_{\alpha(r,T)}}(|s-t|+h_n)
 \end{equation}
 for all $T\geq 0$, $s,t\in [0,T]$ and $n\in\N$ with $h_n\leq t_0$. For every $f\in\D$, 
 by~\cite[Lemma~2.9]{BK23} the mapping $\T\to\Ck,\; t\mapsto S(t)f$ has an extension to $\R_+$ 
 satisfying
 \begin{equation} \label{eq:Lset}
  \|S(s)f-S(t)f\|_\kappa\leq ce^{T\omega_{\alpha(r,T)}}|s-t|
  \quad\mbox{for all } T\geq 0 \mbox{ and } s,t\in [0,T]. 
 \end{equation}
 By construction, the operators $S(t)\colon\D\to\Ck$ are convex and monotone with $S(t)0=0$ 
 for all $t\geq 0$. Moreover, inequality~\eqref{eq:Lset I} implies $\D\subset\L^S$ and property~(ii) 
 is satisfied for all $(f,t)\in\D\times\R_+$.
 
 Second, we extend $(S(t))_{t\geq 0}$ from $\D$ to $\Ck$ and show that property~(ii) and~(iii)
 are valid as well as the inclusion $\LI\subset\LS$ and equation~\eqref{eq:cher2.IS}. We also 
 show that the mapping $t\mapsto S(t)f$ is continuous for all $f\in\Ck$. For every $\epsilon>0$, 
 $r,T\geq 0$ and $K\Subset X$, due to Assumption~\ref{ass:I}(vi) and equation~\eqref{eq:cher2.IS2}, 
 there exist $K'\Subset X$ and $c\geq 0$ with
 \begin{equation} \label{eq:cont2}
  \|S(t)f-S(t)g\|_{\infty,K}\leq c\|f-g\|_{\infty,K'}+\epsilon
 \end{equation}
 for all $t\in [0,T]$ and $f,g\in B_{\Ck}(r)\cap\D$. For every $f\in B_{\Ck}(r)$, 
 Assumption~\ref{ass:I}(v) yields a sequence $(f_n)_{n\in\N}\subset B_{\Ck}(r)\cap\D$ with and 
 $f_n\to f$. Inequality~\eqref{eq:cher2.bound} and inequality~\eqref{eq:cont2} guarantee that the limit
 \[ S(t)f:=\lim_{n\to\infty}S(t)f_n\in\Ck \]
 exists and is independent of the choice of the sequence $(f_n)_{n\in\N}$. The properties~(ii) 
 and~(iii) are satisfied for arbitrary functions $f,g\in\Ck$ and the inclusion $\LI\subset\LS$ 
 follows from the fact that inequality~\eqref{eq:Lset I} is valid for all $f\in\LI$. Next, we 
 verify equation~\eqref{eq:cher2.IS}. Let $(f,t)\in\Ck\times\T$, $\epsilon>0$ and $K\Subset X$. 
 Define $r:=\|f\|_\kappa$ and choose $K'\Subset X$ and $c\geq 0$ such that Assumption~\ref{ass:I}(vi) 
 and inequality~\eqref{eq:cont2} are valid for arbitrary $f,g\in B_{\Ck}(r)$. Since 
 Assumption~\ref{ass:I}(v) yields $g\in B_{\Ck}(r)\cap\D$ with $\|f-g\|_{\infty,K'}<\epsilon$,
 we obtain
 \begin{align*}
  \|S(t)f-I(\pi^t_{n_l})f\|_{\infty,K} 
  &\le \|S(t)f-S(t)g\|_{\infty,K}+\|S(t)g-I(\pi^t_{n_l})g\|_{\infty,K} \\
  &\quad\; +\|I(\pi^t_{n_l})g-I(\pi^t_{n_l})f)\|_{\infty,K} \\
  &\leq 2c\|f-g\|_{\infty,K'}+2\epsilon+\|S(t)g-I(\pi^t_{n_l})g\|_{\infty,K} \\
  &\leq 2(c+1)\epsilon+\|S(t)g-I(\pi^t_{n_l})g\|_{\infty,K}.
 \end{align*}
 Hence, it follows from equation~\eqref{eq:cher2.IS2} that 
 \begin{equation} \label{eq:cher2.IS3}
  \lim_{l\to\infty}\|S(t)f-I(\pi^t_{n_l})f\|_{\infty,K}=0 \quad\mbox{for all } (f,t)\in\Ck\times\T.
 \end{equation}
 In addition, for every $t\geq 0$, $f\in\Ck$, $\epsilon>0$ and $K\Subset X$, Assumption~\ref{ass:I}(v)
 and the previously shown property~(iii) guarantee that there exists $g\in\D$ with
 \[ \|S(s)f-S(t)f\|_{\infty,K}\leq\|S(s)g-S(t)g\|_{\infty,K}+\epsilon
 	\quad\mbox{for all } s\in [0,t+1]. \]
 Hence, it follows from inequality~\eqref{eq:Lset} that $\lim_{s\to t}\|S(s)f-S(t)f\|_{\infty,K}=0$. 
 
 Third, we show that $S(0)f=f$ and $S(s+t)f=S(s)S(t)f$ for all $s,t\geq 0$ and $f\in\Ck$
 and conclude that $S(t)\colon\LS\to\LS$ for all $t\geq 0$. It follows from $I(0)f=f$ that
 $S(0)f=f$ for all $f\in\Ck$. Let $r\geq 0$, $s,t\in\T$ and $f\in B_{\Ck}(r)\cap\D$. 
 For every $n\in\N$, 
 \begin{align*}
  S(s+t)f-S(s)S(t)f 
  &=\big(S(s+t)f-I(\pi_n^{s+t})f\big)+\big(I(\pi_n^{s+t})f-I(\pi_n^s)I(\pi_n^t)f\big) \\
  &\quad\; +\big(I(\pi_n^s)I(\pi_n^t)f-I(\pi_n^s)S(t)f\big)+\big(I(\pi_n^s)S(t)f-S(s)S(t)f\big).
 \end{align*}
 It follows from equation~\eqref{eq:cher2.IS3} that the first and last term on the right-hand
 side convergence to zero for the subsequence $(n_l)_{l\in\N}$. Furthermore, we use 
 inequality~\eqref{eq:cher2.bound}, $k_n^{s+t}-k_n^s-k_n^t\in\{0,1\}$ and $f\in\D\subset\LI$
 to obtain
 \[ \|I(\pi_n^{s+t})f-I(\pi_n^s)I(\pi_n^t)f\|_\kappa
 	\leq e^{(s+t)\omega_{\alpha(r,s+t)}}\|I(h_n)f-f\|_\kappa\to 0 \quad\mbox{as } n\to\infty. \]
 Since $I(\pi_n^t)f, S(t)f\in B_{\Ck}(\alpha(r,t))$ for all $n\in\N$, for every $\epsilon>0$ and 
 $K\Subset X$, Assumption~\ref{ass:I}(vi) yields $K'\Subset X$ and $c\geq 0$ with
 \[ \|I(\pi_n^s)I(\pi_n^t)f-I(\pi_n^s)S(t)f\|_{\infty,K}
 	\leq c\|I(\pi_n^t)f-S(t)f\|_{\infty,K'}+\epsilon \quad\mbox{for all } n\in\N. \]
 Equation~\eqref{eq:cher2.IS3} guarantees that the previous term converges to zero for the
 subsequence $(n_l)_{l\in\N}$ and we obtain $S(s+t)f-S(s)S(t)f=0$ for all $s,t\in\T$ and $f\in\D$.
 Furthermore, Assumption~\ref{ass:I}(v) and the previously shown property~(iii) imply
  \[ S(s+t)f=S(s)S(t)f \quad\mbox{for all } s,t\in\T \mbox{ and } f\in\Ck. \]
 In order to extend the previous equation to arbitrary times $s,t\geq 0$, we choose 
 sequences $(s_n)_{n\in\N}\subset [0,s]\cap\T$ and $(t_n)_{n\in\N}\subset [0,t]\cap\T$
 with $s_n\to s$ and $t_n\to t$. For every $n\in\N$, 
 \begin{align*}
  S(s+t)f-S(s)S(t)f
  &=\big(S(s+t)f-S(s_n+t_n)f\big)+\big(S(s_n)S(t_n)f-S(s_n)S(t)f\big) \\
  &\quad\; +\big(S(s_n)S(t)f-S(s)S(t)f\big).
 \end{align*}
 Since we have already verified the properties~(ii) and~(iii) and shown that $(S(t))_{t\geq 0}$ 
 is strongly continuous, the terms on the right-hand side converge to zero as $n\to\infty$. 
 In addition, for every $f\in\LS$ and $t\geq 0$, there exist $c\geq 0$ and $h_0>0$ with 
 \begin{align*} 
  \|S(h)S(t)f-S(t)f\|_\kappa &=\|S(t)S(h)f-S(t)f\|_\kappa \\
  &\leq e^{t\omega_{\alpha(r,t+h)}}\|S(h)f-f\|_\kappa\leq ce^{t\omega_{\alpha(r,t+h)}}h
 \end{align*}
 for $r:=\|f\|_\kappa$ and all $h\in [0,h_0]$ showing that $S(t)f\in\LS$. 
  
 Fourth, we show that $f\in D(A)$ and $Af=I'(0)f$ for all $f\in\Ck$ such that 
 \[ I'(0)f=\lim_{h\downarrow 0}\frac{I(h)f-f}{h}\in\Ck \]
 exists. Let $\epsilon>0$ and $K\Subset X$. Define $g:=I'(0)f$ and 
 \[ r:=\sup_{n\in\N}\max\left\{\|I(h_n)f\|_\kappa, \|f+h_n g\|_\kappa, 
    \left\|\frac{I(h_n)f-f}{h_n}-g\right\|_\kappa\right\}<\infty. \]
 By Assumption~\ref{ass:I}(vi), there exist $K'\Subset X$ and $c\geq 0$ with
 \begin{equation} \label{eq:cont3}
  \|I(h_n)^k f_1-I(h_n)^k f_2\|_{\infty,K}\leq c\|f_1-f_2\|_{\infty, K'}+\frac{\epsilon}{4}
 \end{equation}
 for all $k,n\in\N$ with $kh_n\leq 1$ and $f_1,f_2\in B_{\Ck}(2r)$. W.l.o.g, we assume that 
 $K\subset K'$. Since Assumption~\ref{ass:I}(v) yields that $\LI\subset\Ck$ is dense, 
 we can use Assumption~\ref{ass:I}(vi) and argue similar to the proof of~\cite[Lemma~4.4]{BK23} 
 to choose $t_0\in (0,1]$ with
 \begin{equation} \label{eq:cond (4.2)}
  \left\|\frac{I(h_n)^k(f+h_n g)-I(h_n)^k f}{h_n}-g\right\|_{\infty,K}
  \leq\frac{\epsilon}{2}
 \end{equation}
 for all $k,n\in\N$ with $kh_n\leq t_0$. By definition of $g$, we can further suppose that
 \begin{equation} \label{eq:cher2.gen1}
  \left\|\frac{I(h)f-f}{h}-g\right\|_{\infty,K'}\leq\frac{\epsilon}{4c}
  \quad\mbox{for all } h\in (0,t_0]. 
 \end{equation}
 By induction, we show that, for every $k,n\in\N$ with $kh_n\leq t_0$, 
 \begin{equation} \label{eq:cher2.gen2}
  \left\|\frac{I(h_n)^k f-f}{kh_n}-g\right\|_{\infty,K}\leq\epsilon. 
 \end{equation}
 For $k=1$, the previous inequality holds due to inequality~\eqref{eq:cher2.gen1}. 
 Moreover, for every $k\in\N$, Lemma~\ref{lem:lambda} implies
 \begin{align*}
  &-\left(I(h_n)^k\left(g-\frac{I(h_n)f-f}{h_n}+I(h_n)f\right)-I(h_n)^k I(h_n)f\right) \\
  &\leq\frac{I(h_n)^k I(h_n)f-I(h_n)^k(f+h_n g)}{h_n} \\
  &\leq I(h_n)^k\left(\frac{I(h_n)f-f}{h_n}-g+f+h_n g\right)-I(h_n)^k(f+h_n g).
 \end{align*}
 It follows from inequality~\eqref{eq:cont3} and inequality~\eqref{eq:cher2.gen1} that
 \[ \left\|\frac{I(h_n)^k I(h_n)f-I(h_n)^k(f+h_n g)}{h_n}\right\|_{\infty,K}
 	\leq c\left\|\frac{I(h_n)f-f}{h_n}-g\right\|_{\infty,K'}+\frac{\epsilon}{4}
 	\leq\frac{\epsilon}{2}. \]
 If inequality~\eqref{eq:cher2.gen2} is valid for a fixed $k\in\N$, we can use 
 inequality~\eqref{eq:cond (4.2)} to conclude
 \begin{align*}
  \left\|\frac{I(h_n)^{k+1}f-f}{(k+1)h_n}-g\right\|_{\infty,K}
  &\leq\frac{1}{k+1}\left\|\frac{I(h_n)^k I(h_n)f-I(h_n)^k(f+h_n g)}{h_n}\right\|_{\infty,K} \\
  &\quad\; +\frac{1}{k+1}\left\|\frac{I(h_n)^k(f+h_n g)-I(h_n)^k f}{h_n}-g\right\|_{\infty,K} \\
  &\quad\; +\frac{k}{k+1}\left\|\frac{I(h_n)^k f-f}{kh_n}-g\right\|_{\infty,K} \\
  &\leq\frac{1}{k+1}\cdot\frac{\epsilon}{2}+\frac{1}{k+1}\cdot\frac{\epsilon}{2}
  +\frac{k}{k+1}\epsilon =\epsilon.
 \end{align*}
 For every $t\in (0,t_0]\cap\T$, equation~\eqref{eq:cher2.IS3} and inequality~\eqref{eq:cher2.gen2} 
 yield
 \[ \left\|\frac{S(t)f-f}{t}-g\right\|_{\infty,K}
 	=\lim_{l\to\infty}\left\|\frac{I(\pi_{n_l}^t)f-f}{k_{n_l}^t h_{n_l}}-g\right\|_{\infty,K}\leq\epsilon. \]
 Since $(S(t))_{t\geq 0}$ is strongly continuous, the previous estimate remains valid for arbitrary 
 times $t\in (0,t_0]$ showing that
 \[ \lim_{h\downarrow 0}\left\|\frac{S(h)f-f}{h}-g\right\|_{\infty,K}=0. \qedhere \]
\end{proof}

\bibliographystyle{abbrv}
\bibliography{BibThesis}

\end{document}